\theoremstyle{plain}
\newtheorem{mainthm}{Theorem}
\newtheorem{thm}{Theorem}[section]
\newtheorem{lem}[thm]{Lemma}
\newtheorem{prop}[thm]{Proposition}
\newtheorem{defi}[thm]{Definition}
\theoremstyle{definition}
\newtheorem{exap}[thm]{Example}
\newtheorem{rem}[thm]{Remark}
\newtheorem*{notation}{Notation}
\newcommand{\eqdef}{\stackrel{\scriptscriptstyle\rm def}{=}}
\def\l@part{\@tocline{0}{-2pt}{1pc}{}{}}
\def\l@section{\@tocline{1}{-2pt}{1pc}{4.6em}{}}
\renewcommand{\tocpart}[3]{%
  \indentlabel{\@ifnotempty{#2}{\makebox[2.3em][l]{%
    \ignorespaces#1 #2.\hfill}}}\bf{#3}}
\renewcommand{\tocsection}[3]{%
  \indentlabel{\@ifnotempty{#2}{\hspace*{2.3em}\makebox[2.3em][l]{%
    \ignorespaces#1 #2.\hfill}}}#3}
\let\oldtocsection=\tocsection
\let\oldtocsubsection=\tocsubsection
\renewcommand{\tocsection}[2]{\hspace{0em}\bf\oldtocsection{#1}{#2}}
\renewcommand{\tocsubsection}[2]{\hspace{4.8em}\oldtocsubsection{#1}{#2}}
\let\oldtocsubsubsection=\tocsubsubsection
\renewcommand{\tocsubsubsection}[2]{\hspace{4.2em}\oldtocsubsubsection{#1}{#2}}
\begin{document}

\title[Robust  unfoldings]{~\vspace{0.25cm}Robust degenerate unfoldings of cycles and tangencies}
\author[Barrientos]{Pablo G. Barrientos}
\address{\centerline{Instituto de Matem\'atica e Estat\'istica, UFF}
    \centerline{Rua M\'ario Santos Braga s/n - Campus Valonguinhos, Niter\'oi,  Brazil}}
\email{pgbarrientos@id.uff.br}
\author[Raibekas]{Artem Raibekas~\vspace{0.30cm}}
\address{\centerline{Instituto de Matem\'atica e Estat\'istica, UFF}
   \centerline{Rua M\'ario Santos Braga s/n - Campus Valonguinhos, Niter\'oi,  Brazil}}
\email{artem@mat.uff.br}

\subjclass[2010]{Primary 58F15, 58F17; Secondary: 53C35.}
\keywords{Homoclinic tangencies, heterodimensional cycle,
blenders, parablenders}
%\thanks{The first author was supported by MTM2014-56953-P and the second  by FAPERJ-INST 111.595/2014.}

\begin{abstract}
 We construct open sets of {degenerate} unfoldings
of heterodimensional cycles of any co-index $c>0$ and homoclinic
tangencies of arbitrary codimension $c>0$.
%{Motivated by the concept of Newhouse domains we define Berger domains (of degenerate unfoldings of codimension $c>0$) and as a consequence we show the existence of Berger domains for families of diffeomorphisms.}
These {type of} sets are known to be the support
of unexpected phenomena in families of diffeomorphisms, such as
the Kolmogorov typical co-existence of infinitely many attractors.
{As a prerequisite}, we also construct robust homoclinic
tangencies of large codimension
 {which cannot be inside a strong partially
hyperbolic set}.
\end{abstract}
\setcounter{tocdepth}{1} %~\vspace{-1cm}
 \maketitle
%~\vspace{0.25cm} %\tableofcontents
\section{Introduction}
Robust homoclinic tangencies and robust heterodimensional cycles
are, in general, pre\-requisites for obtaining abundant
complicated dynamical systems~\cite{New70,GS72,New74,BD08,BD12}.
Both configurations imply the existence of a non-transversal
intersection between the stable and unstable manifolds of points
in the same or in different transitive hyperbolic sets. A priori,
the non-transverse intersection could be destroyed by a small
perturbation. But since it is robust, this means that a new
non-transverse intersection is created between the manifolds of
the continuation of the hyperbolic sets. The unfolding of these
bifurcations yields a great number of changes in the dynamics. For
instance, infinitely many saddle periodic points and sinks appear
in the unfolding of homoclinic tangencies. Hence, the persistence
of these bifurcations allowed to get a generic coexistence of
infinitely many periodic attractors~\cite{New79,GTS93,PV94,GST08}.

The construction of robust tangencies in lower dimension is based
on the creation of thick horseshoes involving distortion estimates
which are typically $C^2$. However, in higher dimensions it was
possible to construct robust homoclinic tangencies in the
$C^1$-topology using blenders~\cite{Ao08,BD12}. Blenders are
hyperbolic sets having a thicker invariant manifold than initially
expected. They were discovered by Bonatti and Diaz~\cite{BD96} and
now are essential objects in the study of non-hyperbolic dynamics.
On the other hand, all of the above mentioned constructions are of
codimension one. That is, the dimension of the coincidence of the
tangent spaces at the tangent point. Recently in~\cite{BR17}, the
authors gave the first examples of $C^2$-robust tangencies of
large codimension. The novelty in the construction was the use of
the blender for the dynamics induced in the tangent bundle.

A different approach, when compared to the generic results
mentioned above, is to look for bifurcations of homoclinic
tangencies in \textit{parametric families} of diffeomorphisms. For
decades it was thought that the coexistence of infinitely many
hyperbolic attractors was meager in families of dynamical
systems~\cite{PS95}. However, recently and far from intuition,
Berger showed in~\cite{Ber16} that actually these phenomena form a
residual set.
%(even for families of diffeomorphisms).
Behind this result was the construction of open sets of families
of endomorphisms with robust homoclinic tangencies and with an
extra property: the family unfolds  {degenerately} a tangency.
This means the unfolding is slow in the sense of the zeroing of
the first terms in a certain Taylor polynomial describing the
local separation between the manifolds. Although a {degenerate}
unfolding of a tangency could be destroyed by a small perturbation
of the family, this perturbation has another tangency which
unfolds  {also degenerately}. The mechanism involved in these
constructions of Berger~\cite{Ber16} is also the blender, but now
constructed for the dynamics induced in the space of jets (the
space of velocities). See also \cite{BCP17,Ber17}.

The objective of the present work is to unite the construction
of~\cite{BR17} and \cite{Ber16} to obtain robust {degenerate}
unfoldings of homoclinic tangencies of large codimension for
families of diffeomorphisms. Only this will not be done by merely
combining the two previous results. Here we present a new method
of construction of robust tangencies of large codimension,
different from the one in~\cite{BR17}. This is a generalization to
higher codimension of the construction in~\cite{BD12} using
folding manifolds.
%We will also be able to provide for the first
%time examples of $C^1$-robust homoclinic tangencies of large
%codimension.
It is expected that the unfolding of these robust degenerated
tangencies gives new dynamical consequences. For example, the
existence of residual sets with infinitely many attracting
invariant tori of large dimension.
%Furthermore, these results are
%also expected typically for families on the open set of
%\textcolor{blue}{non-generic} unfoldings of large tangencies
%obtained  in this work.

%
%Kinematics can be understood as the study of motion of geometric
%objects in phase space with given initial conditions, such as
%position, velocity and acceleration. The geometric objects may be
%manifolds and the changes in the initial conditions of the system
%can be seen through a change of the variables in a parametric
%family. In this paper we study the unfolding kinematics of the
%manifolds involved in the bifurcations of heterodimensional cycles
%and homoclinic tangencies of a diffeomorphism.

%
%Kinematics studies the motion of points in the phase space by
%describing the geometry of the system and declaring the initial
%conditions of any known values of position, velocity and/or
%acceleration of points within the system. Then, using arguments
%from geometry, the position, velocity and acceleration of any
%unknown parts of the system can be determined. In this paper we
%study the kinematics of unfoldings of the bifurcation of
%heterodimensional cycles and homoclinic tangencies of a
%diffeomorphism.% $f$ of a manifold $\mathcal{M}$.

%For simplicity, we choose $a=0$ as the critical parameter, i.e.,
%when $f_0=f$.
%\subsection{Heterodimensional cycles and homoclinic tangencies}
\subsection{Dimension and codimension of an intersection}
{Let us begin with some definitions on the intersections of
submanifolds.} Let $\mathcal{L}$ and $\mathcal{S}$ be submanifolds
of $\mathcal{M}$.  {We say that $\mathcal{L}$ and $\mathcal{S}$
have an intersection of \emph{dimension} $d\geq 0$ at $x\in
\mathcal{L}\cap \mathcal{S}$, if
$$
     d=d_x(\mathcal{L},\mathcal{S})\eqdef \dim T_x\mathcal{L}\cap
     T_x\mathcal{S}.
$$
Notice that $d$ is the maximum number of common linearly
independent tangent directions in $T_x\mathcal{M}$. However, this
number is not enough to measure how far the intersection is from
being transverse. In order to quantify this
%separation
we say that
$\mathcal{L}$ and $\mathcal{S}$ have an intersection of
\emph{codimension} $c\geq 0$ if
\begin{equation*}
\label{codimension}
   c=c_x(\mathcal{L},\mathcal{S})\eqdef \dim \mathcal{M}- \dim
   \left(T_x\mathcal{L}+T_x\mathcal{S}\right).
%  \dim \mathcal{M}- (\dim T_x\mathcal{L} +
%   \dim T_x \mathcal{S} - \dim T_x\mathcal{L} \cap T_x
%   \mathcal{S}),
\end{equation*}
%which
An intersection of codimension zero is called \emph{transverse}
and is said to be \emph{tangencial} otherwise. Observe that the definition of
a tangency ($c>0$) includes the case $d=0$ in which
$T_x\mathcal{L}+T_x\mathcal{S}=T_x\mathcal{L}\oplus T_x
\mathcal{S}$. In the literature this case is called as a
\emph{quasi-transverse} intersection. On the other hand, the sum
of the dimension and the codimension of an intersection between
submanifolds is in general not equal to the dimension of
$\mathcal{M}$. Moreover, when the codimension of $\mathcal{L}$
coincides with the dimension of $\mathcal{S}$, then if $x\in
\mathcal{L}\cap \mathcal{S}$
$$
   c_x(\mathcal{L},\mathcal{S})= \dim \mathcal{M}- (\dim T_x\mathcal{L} +
   \dim T_x \mathcal{S} - \dim T_x\mathcal{L} \cap T_x
   \mathcal{S}) = d_x(\mathcal{L},\mathcal{S}).
$$
%Figure~\ref{fig11} shows examples in a three-dimensional manifold
%of transverse and non-transverse intersections.
}

 \begin{figure}
 \footnotesize
  \begin{center}
   \begin{picture}(400,165)
\put(10,0){\subfigure[]{\label{fig1a}
\includegraphics[scale=0.6]{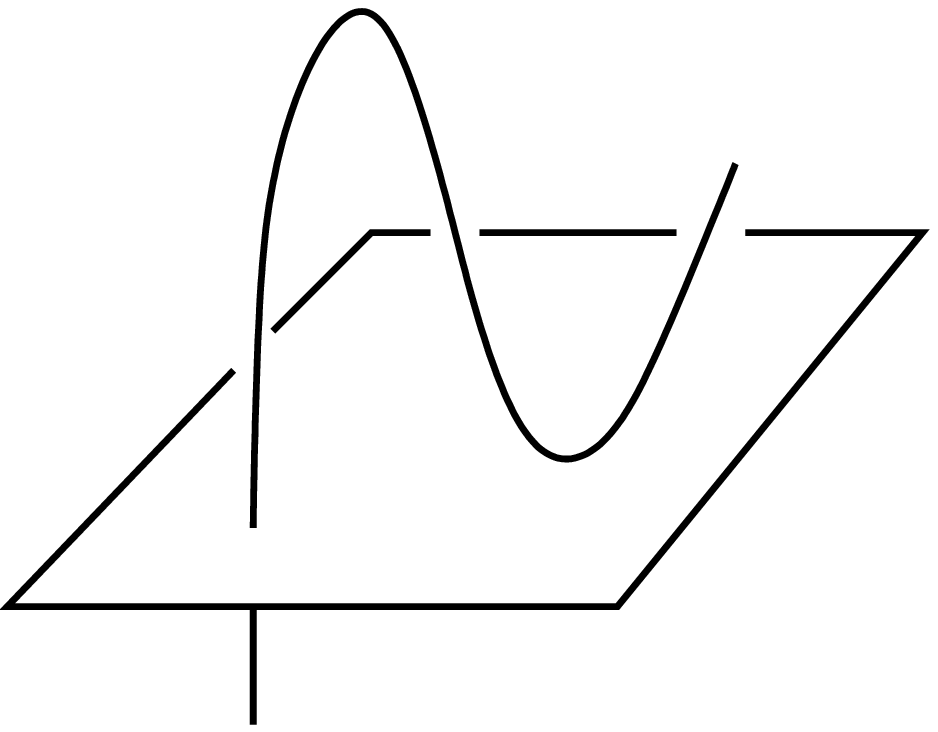}}}
\put(230,0){\subfigure[]{\label{fig1b}\includegraphics[scale=0.4]{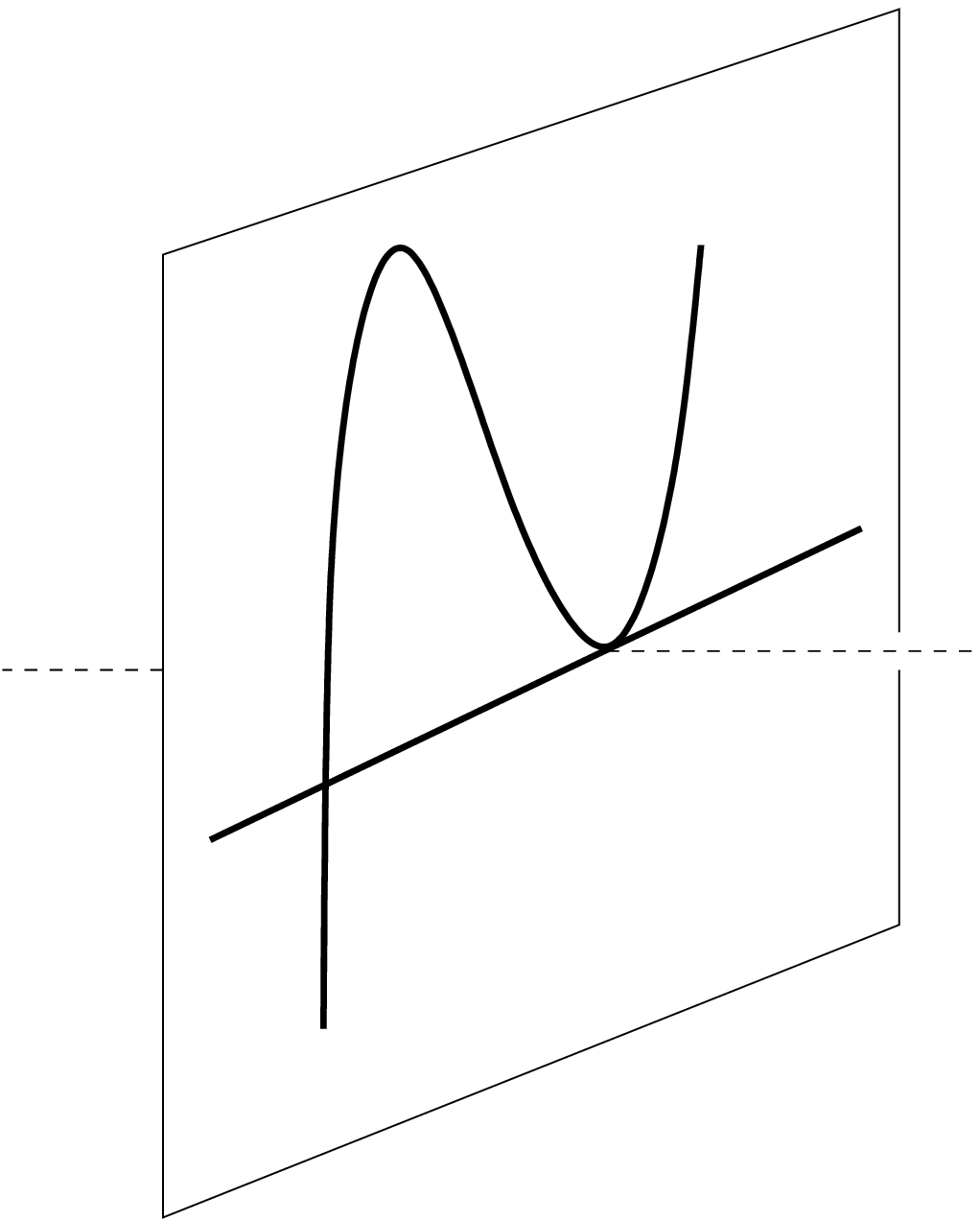}}}
\put(60,40){\normalsize$x_1$} \put(110,38){\normalsize $x_2$}
\put(145,100){\normalsize $\mathcal{L}$} \put(153,75){\normalsize
$\mathcal{S}$} \put(273,45){\normalsize$y_1$}
\put(273,45){\normalsize$y_1$} \put(300,59){\normalsize$y_2$}
\put(315,120){\normalsize $\mathcal{L}$} \put(330,86){\normalsize
$\mathcal{S}$}
\end{picture}
~\\[1.2cm]
 \caption{
 Intersection of two submanifolds in a three dimensional space.
 Figure~\ref{fig1a} shows a transverse intersection at the
 point $x_1$ and a tangency of
 codimension $c=1$ at $x_2$ (and dimension also $d=1$).
 Figure~\ref{fig1b} shows
 intersections of codimension $c=1$ and $c=2$ at $y_1$ and $y_2$
 respectively. At $y_1$ we have a quasi-transverse intersection (dimension $d=0$)
 and at  $y_2$ we have a tangency of dimension $d=1$.
 }\label{fig11}
 \end{center}
 \end{figure}

\subsection{Heterodimensional cycles and homoclinic tangencies}
 A $C^r$-diffeomorphism $f$ of a manifold $\mathcal{M}$ has a \emph{homoclinic tangency of codimension $c>0$}
 if there is a pair of points $P$ and $Q$, in the same transitive
hyperbolic set, so that the unstable invariant manifold of $P$ and
the stable invariant manifold of $Q$ have  {an intersection of
codimension~$c$} at a point $Y$. That~is,
$$
  Y \in W^u(P)\cap W^s(Q) \quad \text{and} \quad  c= c_Y( W^u(P),W^s(Q)).
$$
 {Observe that actually, as the codimension of
$W^u(P)$ coincides with the dimension of $W^s(Q)$ we have that also in
this case
$$
   c=d_Y(W^u(P),W^s(Q)).
$$}
%We say that the tangency is \emph{large} (or degenerated) if
%$c\geq 2$.
%Although homoclinic tangencies could be seen as a
%bifurcation of a connection, we will consider them as bifurcations
%of a tangency.
Similarly,  $f$  has a \emph{heterodimensional cycle of co-index
$c>0$} if there exist two transitive hyperbolic sets $\Lambda$ and
$\Gamma$ such that their invariant manifolds meet cyclically and
$|\mathrm{ind}^s(\Lambda)-\mathrm{ind}^s(\Gamma)|=c$. Here
$\mathrm{ind}^s(\cdot)$ denotes the dimension of the stable bundle
of the  respective set.
%The co-index of the cycle implies, that
 {By means of an arbitrarily small perturbation if
necessary,  the stable and unstable manifolds have a
transverse intersection of dimension $c$ and a tangency  which is
a quasi-transverse intersection of codimension $c$. Indeed,
%up to swapping $\Lambda$ and $\Gamma$,
we can assume that
$\mathrm{ind}^s(\Lambda)-\mathrm{ind}^s(\Gamma)=c$
and for $P\in
\Lambda$, $Q\in \Gamma$ suppose that $Y$ belongs to
$W^s(P)\cap W^u(Q)$. Then $\dim T_YW^s(P)+\dim T_YW^u(Q)=\dim
\mathcal{M}+c$ and thus, in general,
%we have that
$$
   d_Y(W^s(P),W^u(Q))=c \quad \text{and} \quad c_Y(W^s(P),W^u(Q))=0.
$$
%$d_Y(W^s(P),W^u(Q))=\dim T_YW^s(P)+\dim T_YW^u(Q)-\dim
%\mathcal{M}=c$. This implies by a straightforward computation that
%$c_Y(W^s(P),W^u(Q))=0$.
On the other hand, if $Y\in W^u(P)\cap W^s(Q)$ then $\dim
T_YW^u(P)+\dim T_YW^s(Q)=\dim \mathcal{M}-c$ and hence,
%in general, we get that
$$
   d_Y(W^u(P),W^s(Q))=0 \quad \text{and} \quad
   c_Y(W^u(P),W^s(Q))=c.
$$
}

\subsection{Robust tangencies of large codimension}
{By Kupka-Smale's theorem, $C^r$-generically, the stable and
unstable manifolds of a pair of saddle hyperbolic periodic points
meet transversally. Hence, tangencies associated with saddles
occurs in the complement of a residual set of diffeomorphisms and
thus are non-generic dynamical configurations. However, the
situation is different if instead of the periodic saddles we
consider non-trivial hyperbolic sets.
%$\Lambda$ and $\Gamma$. instance $P$ and $Q$.
It is well-known the existence of open sets of diffeomorphisms
displaying non-transverse intersections between the stable and
unstable manifolds of points in the continuations of these
hyperbolic sets.
%$\Lambda$ and$\Gamma$ respectively.
That is, the so-called $C^r$-open sets of diffeomorphisms with
robust tangencies or robust heterodimensional cycles.
See~\cite{New70,GTS93,PV94,BD12} for robust homoclinic tangencies
of codimension one and \cite{BD08} and reference therein for
robust heterodimensional cycles.}

Robust homoclinic tangencies of large codimension in the
$C^2$-topology were recently discovered in~\cite{BR17} inside
\emph{strong partially hyperbolic sets}. That is, invariant sets
with a dominated splitting of the form $E^s\oplus E^c \oplus E^u$,
where $E^s$ and $E^u$  are the non-trivial contracting and
expanding bundles respectively. Here, we will construct new
examples
%which are robust in the $C^1$-topology.
%The examples obtained in
%the following result are also
of a different nature from the robust tangencies of large
codimension showed in~\cite{BR17}. This is because they cannot be
embedded inside a strong partially hyperbolic set. At the point of
tangency, % {if it have a dominating  splitting, it is}
the splitting is of the form $E^s\oplus E^c$, where $E^c$ cannot
be divided into neither contracting nor expanding subbundles. In
this case, we say that the tangency is inside a \emph{weak
partially hyperbolic set}.

\begin{mainthm}
\label{thmD} Every manifold of dimension $m> c^2+c$ admits a
diffeomorphism having a $C^{2}$-robust homoclinic tangency of
codimension $c>0$ inside a weak partially hyperbolic set.
% {which is not inside a strong
%partially hyperbolic set.}
\end{mainthm}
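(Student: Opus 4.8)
The plan is to realize the tangency through a \emph{blender whose induced dynamics on a Grassmann bundle has the covering property}, generalizing the folding-manifold construction of~\cite{BD12} from codimension one to codimension $c$. I will work in a local model whose weak-center direction is $\mathbb{R}^{2c}=\mathbb{R}^c_h\oplus\mathbb{R}^c_v$, with the unstable manifold of the blender appearing as an approximately horizontal $c$-disk $\{(x,0)\}$ and the second hyperbolic point carrying a \emph{folding manifold}: a $c$-disk of the form $\{(x,\phi(x))\}$ with $\phi(0)=0$, $D\phi(0)=0$, and non-degenerate quadratic leading term. When such a fold is tangent to a horizontal $c$-plane in all $c$ vertical directions, the intersection $W^u(P)\cap W^s(Q)$ has exactly codimension $c$, with splitting $E^s\oplus E^c$ and $E^c$ the neutral $2c$-dimensional fold block.

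First I would design the blender as an iterated function system acting on the $1$-jet (position together with tangent $c$-plane) of horizontal $c$-disks. The relevant configuration space is the Grassmann bundle $\mathbb{R}^c_v\times\mathrm{Gr}(c,2c)$, whose fibre has dimension $c+c^2$, since $\dim\mathrm{Gr}(c,2c)=c^2$. Choosing finitely many contractions of this space whose images cover an open box, and transporting them to genuine hyperbolic dynamics on the ambient manifold (the standard device of encoding an IFS as the transition maps of a locally maximal hyperbolic set), produces a blender $\Lambda$ whose unstable manifold meets every horizontal $c$-disk through the covered box \emph{robustly}, realizing any prescribed tangent configuration in the box. This is precisely the step that forces the dimension bound: to embed dynamics inducing the required $(c^2+c)$-dimensional covering one needs $m>c^2+c$.

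Next I would place the folding manifold so that the tangent $c$-plane at its fold point lies in the interior of the covered box. The covering property then yields a point $Y\in W^u(P)\cap W^s(Q)$ where the blender's unstable $c$-plane coincides with the horizontal $c$-plane to which the fold is tangent, producing a codimension-$c$ tangency; and because the covering is open, the same conclusion survives every sufficiently small $C^2$ perturbation, giving $C^2$-robustness. The main obstacle is exactly the robustness of this Grassmannian covering in the $C^2$ topology: a perturbation moves both the blender's invariant $c$-planes and the fold's tangent $c$-plane, so the covering IFS must be arranged with enough margin (an open covering with overlaps) that the induced first-jet dynamics still surjects onto a neighborhood of the fold configuration. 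Controlling the distortion of the induced Grassmann dynamics is what confines the construction to the $C^2$ rather than the $C^1$ category, matching~\cite{BR17}.

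Finally I would verify the \emph{weak} partial hyperbolicity. By construction the $2c$-dimensional fold block $E^c$ carries non-dominated behaviour along the tangency orbit: the folding return map interchanges the roles of the two $\mathbb{R}^c$ factors, balancing their contraction and expansion rates so that $E^c$ admits no dominated splitting into contracting and expanding sub-bundles, which rules out any strong partially hyperbolic splitting $E^s\oplus E^c\oplus E^u$ and distinguishes these examples from those of~\cite{BR17}. Embedding the local model into an arbitrary manifold of dimension $m>c^2+c$ by a standard chart-and-extension argument then completes the proof.
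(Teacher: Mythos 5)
Your high-level strategy (a blender for the dynamics induced on a Grassmannian bundle, intersected robustly with the lift of a folding manifold, generalizing~\cite{BD12}) is the same as the paper's, but the way you distribute the work between the blender and the fold contains a genuine gap, and it shows up exactly in your dimension count. You ask the \emph{blender} to have the covering property on the full configuration space $\mathbb{R}^c_v\times\mathrm{Gr}(c,2c)$ of dimension $c+c^2$, and you attribute the hypothesis $m>c^2+c$ to ``embedding a $(c^2+c)$-dimensional covering.'' But a covering in the Grassmannian fibre costs no ambient dimensions: your local model has centre block $\mathbb{R}^{2c}$, so it lives in $m\ge ss+2c$, and nothing in your construction forces $m>c^2+c$. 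In the paper the $c^2$ extra dimensions are needed in the \emph{ambient manifold}: the Grassmannian directions are made \emph{strong stable} for the induced dynamics (the derivative cocycle has a common attracting fixed plane $E^u$, so $\Gamma^G=\Gamma\times\{E^u\}$), and it is the \emph{folding manifold} that must sweep out a full neighbourhood of $E^u$ in the Grassmannian. Since the tangent planes of a $c$-dimensional graph $\{(x,\phi(x))\}$ form only a $c$-parameter family inside the $c^2$-dimensional space of $c$-planes, your fold is too small for that role when $c>1$; the paper's folding manifold has dimension $ss+c^2$, is parameterized by $t\in[-\epsilon,\epsilon]^{c^2}$ with the specific quadratic $H_i(t)=\sum_j t_{j+1}t_{ju+i}$ (Example~\ref{exap:dobra}), and condition~(iii) of Definition~\ref{def:dobra} — every plane in $\overline{\mathcal{C}^G}$ is contained in $T_z\mathcal{S}$ for a unique $t$ — is precisely what forces the blender's central dimension to be $c^2$ and hence $m=ss+c+c^2>c^2+c$.

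If instead you insist on your literal reading — the derivative cocycle itself realizes a covering IFS on $\mathrm{Gr}(c,2c)$, with the Grassmannian fibre central rather than strong stable — then the entire technical content of the theorem is concentrated in that one asserted step (constructing such a cocycle, checking domination over the base, and verifying that the lift of $W^s(Q)$ contains an almost-horizontal disc of the correct dimension $d_{ss}$ for this new splitting), none of which you carry out; this is essentially the route of~\cite{BR17} via tangent-bundle blenders, which the paper deliberately avoids because it produces tangencies inside strong partially hyperbolic sets. Relatedly, your argument for \emph{weak} partial hyperbolicity (the return map ``interchanges the two $\mathbb{R}^c$ factors'') is a heuristic about a fold return map you never construct; the paper's argument is different and elementary: at the tangency point the common direction $T_xW^u_{loc}(z)$ lies in the unstable cone $\mathcal{C}^G$, which is disjoint from the strong stable cone, so it cannot be a strong stable direction, ruling out a splitting $E^s\oplus E^c\oplus E^u$ there.
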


%After the conclusion of this paper Asaoka in~\cite{} has
%constructed

Notice that the above theorem gives as a particular case the
well-known results~\cite{GTS93,PV94,Ro95} about $C^2$-robust
homoclinic tangencies of codimension one in higher dimensions.
%\cite{GTS93,PV94,Ro95}.
Here, we provide a different proof inspired by the construction of
$C^1$-robust homoclinic tangencies of Bonatti and D\'iaz
in~\cite{BD12}. The concepts of \emph{folding manifolds} and
\emph{blenders} constructed in the \emph{tangent bundle} allows us
to extend their result to large codimension.
%The difference here is that, as it was
%similarly done in~\cite{BR17}, we will construct blenders in the
%bundle tangent of the manifold to provide the robustness of the
%intersection between the unstable/stable local manifolds and the
%folding manifold.
%To do this we need to assume regularity $r=2$
%(see~\S\ref{sec:grasmanian}).  In fact, except for this, our
%arguments are essentially $C^1$. With regularity $r=1$ it is known
%that there are robust homoclinic tangencies of codimension $c=1$
%\cite{Ao08,BD12}. However, it is unknown whether in this
%regularity there are large tangencies.

\subsection{{Degenerate} unfoldings}
A  {tangency} at a point $Y$ between the unstable manifold
$W^u(P)$ and the stable manifold $W^s(Q)$ of a
$C^r$-diffeomorphism $f$ can be unfolded by considering
$C^d$-families $(f_a)_a$ of $C^r$-diffeomorphisms
pa\-ra\-me\-te\-ri\-zed by $a\in \mathbb{I}^k$ with $f_0=f$ and
$\mathbb{I}=[-1,1]$. {We will suppose $0< d\le r<\infty$ and
$k\geq 1$.}
%since the codimension of the {intersection} in\-di\-ca\-tes
%the minimum number of parameters necessary to provide an universal
%unfolding, one actually only needs to consider $k\leq c$.
Many articles usually impose a generic condition on the velocity
of the unfolding. {They assume} that the distance between the
manifolds has positive derivative with respect to the parameter:
$$
   \frac{d\delta}{da}(0)\not=0 \quad \text{where} \ \
   \delta(a)=\min \{d(x,y): x\in W^u(P_a)\cap U, \ \  y\in W^s(Q_a)\cap U\}.
$$
Here $P_a$, $Q_a$ are the continuations of the hyperbolic saddles
$P_0=P$ and $Q_0=Q$ for $f_a$  respectively and $U$ is a small
neighborhood of $Y$. However, in this work we are interested in
studying {unfoldings} where this generic assumption fails.

{Let us consider first the case that the family $f=(f_a)_a$
unfolds a heterodimensional cycle of co-index $c>0$ at $a=0$. That
is,
%we assume that
the above points $P_0$ and $Q_0$  now belong, respectively, to
transitive hyperbolic sets $\Lambda_0$ and $\Gamma_0$ of $f_0$
with co-index $c$ and whose stable and unstable manifolds
intersect cyclically. Moreover, say that $Y\in W^u(P_0)\cap
W^s(Q_0)$ is the tangency of the cycle (i.e, the intersection that
in general could be assumed quasi-transverse and of codimension
$c$). The unfolding of this heterodimensional cycle
 is said to be \emph{$C^d$-{degenerate}} at $a=0$
if there exist
$$
 p_a \in W^u_{loc}(P_a) \ \ \text{and} \ \ q_a\in  W^s(Q_a) \ \
\text{so that} \ \ d(p_a,q_a)=o(\|a\|^{d})\ \ \text{at $a=0$}
$$
where %$P_a$ and $Q_a$ are  the continuations of $P_0=P$ and $Q_0=Q$ for $f_a$  respectively,
$p_0=q_0=Y$ and $p_a$, $q_a$ vary $C^d$-continuously with respect
to the parameter $a\in \mathbb{I}^k$.}

 {Now we consider that the family $f=(f_a)_a$
unfolds  a homoclinic tangency of codimension $c>0$ at $a=0$. That
is, we assume
%that the above
the points $P_0$ and $Q_0$ belong to the same hyperbolic set
$\Lambda_0$ of $f_0$ and that the homoclininc tangency $Y \in
W^u(P_0)\cap W^s(Q_0)$ has {codimension} $c>0$. The unfolding of
this homoclinic tangency  of codimension $c>0$ is said to be
\emph{$C^d$-{degenerate}} at $a=0$ if there are points $p_a \in
W^u_{loc}(P_a)$, $q_a \in W^s(Q_a)$ and $c$-dimensional subspaces
$E_a$ and $F_a$ of $T_{p_a}W^u(P_a)$ and $T_{q_a}W^s(Q_a)$
respectively such that
$$
 d(p_a,q_a)=o(\|a\|^{d}) \quad \text{and} \quad
d(E_a,F_a)={o(\|a\|^{d})} \quad \text{at $a=0$}.
$$
%Here, as above, $P_a$ and $Q_a$ are the continuations of $P_0$ and
%$Q_0$ for $f_a$.
Here,} $p_0=q_0=Y$ and $(p_a,E_a)$, $(q_a,F_a)$ vary
$C^d$-continuously with respect to the parameter $a\in
\mathbb{I}^k$. Observe that in this case it is necessary to assume
that $d<r$ because the above definition involves the dynamics of
the family $(f_a)_a$ in the tangent bundle (in fact, in certain
Grassmannian bundles). In~\cite{Ber16} $C^d$-degenerate unfoldings
of homoclinic tangencies were called for short
\emph{$C^d$-paratangencies}. {The key consequence of having a
$C^d$-paratangency at $a=0$ is that one can perturb the family and
obtain a new family which now has a \emph{persistent homoclinic
tangency} in the sense of~\cite{Ber17}. That is, a tangency point
$Y_a$ between the stable and unstable manifold which varies
$C^d$-continuously with respect to $a$ in an open set of
parameters $J\subset \mathbb{I}^k$ containing $a=0$.
%See Definition~\ref{def:Berger-domain} and
%Proposition~\ref{prop-paratangecia} for more details.
}

\subsection{Open sets of families with {degenerate} unfoldings}
%{Pause} unfoldings bring interesting dynamical
%consequences. For instance, as it was pointed out by Turaev
%in~\cite{T03}, a {degenerate} unfolding of a dissipative
%homoclinic tangency for surface diffeomorphisms leads to  a dense
%set of families which have infinitely many sinks for a parameter
%interval. This was the first indication that the situation might
%be more complicated than previously thought. As already mentioned,
%it was thought that the coexistence of infinitely many sinks was
%meager within the typical unfolding of tangencies~\cite{PS95}.
%Typical here is understood in the sense of Kolmogorov,
%i.e., a property is \emph{typical} if there exists a Baire generic
%set of $C^d$-families of $C^r$-dynamics so that the property is
%satisfied by Lebesgue almost every parameter of the family.

%Recall that the notion of the $C^{d,r}$-topology is in
%\S\ref{sec:topology}:

{In order to be more precise, we now introduce the following
definitions. The exact notion of a $C^d$-family of
$C^r$-diffeomorphisms and the $C^{d,r}$-topology considered is
going to be defined in \S\ref{sec:topology}. }

A $k$-parameter $C^{d}$-family $f=(f_a)_a$ of
$C^r$-diffeomorphisms $f_a$   {displays} a %\vspace{-0.2cm}
\begin{enumerate}[leftmargin=0.9cm,itemsep=0.2cm]
\item[-] \emph{$C^{d,r}$-robust $C^d$-{degenerate} unfolding of
a homoclinic tangency of codimension $c$} at $a=0$ if there are a
transitive hyperbolic set $\Lambda_0$ of $f_0$ and a
$C^{d,r}$-neighborhood $\mathscr{U}$ of $f$, such that any
$g=(g_a)_a \in \mathscr{U}$ {displays} a $C^d$-{degenerate}
unfolding of a homoclinic tangency of codimension $c>0$ at $a=0$
associated with the continuations of $\Lambda_{0}$ for $g_0$.
\item[-] \emph{$C^{d,r}$-robust $C^d$-{degenerate} unfolding of a
heterodimensional cycle of co-index $c$} at $a=0$ if there are
transitive hyperbolic sets $\Lambda_0$ and $\Gamma_0$ of $f_0$
with co-index $c$ and a $C^{d,r}$-neighborhood $\mathscr{U}$ of
$f$, such that any $g=(g_a)_a \in \mathscr{U}$ {displays} a
$C^d$-{degenerate} unfolding of a heterodimensional cycle of
co-index $c>0$ at $a=0$ associated with the continuations of
$\Lambda_{0}$ and $\Gamma_0$ for $g_0$.
\end{enumerate}

For simplicity, we have chosen $a=0$ as the critical parameter of
the unfolding. However, {degenerate} unfoldings can also be
introduced at any other parameter $a=a_0$ with $a_0\in
\mathbb{I}^k$. We say that $f=(f_a)_a$ {displays} a
$C^{d,r}$-robust $C^d$-{degenerate} unfolding of a
heterodimensional cycle (or a tangency) at \emph{any parameter}
when any $g=(g_a)_a \in \mathscr{U}$ has a heterodimensional cycle
(or a  {homoclinic} tangency) at $a=a_0$ which unfolds
$C^d$-{degenerate} for all $a_0\in \mathbb{I}^k$. Robust
{degenerate} unfoldings at any parameter are involved in
unexpected phenomena as the typical coexistence of infinitely many
sinks~\cite{Ber16,Ber17}, infinitely many non-hyperbolic strange
attractors~\cite{R17} and fast growth of periodic
points~\cite{Ber19} among others.

%We will extend Berger's construction \cite{Ber16, Ber17}  of open
%sets of families of surface endomorphisms, which are
%{degenerate} unfoldings of tangencies of codimension
%$c=1$,

%The following result shows the existence of $C^{d,r}$-robust
%$C^d$-degenerate unfolding of homoclinic tangencies of codimension
%$c=1$ for families of diffeomorphisms of dimension $m\geq 3$ and
%extend the result for arbitrarily large codimension.
% which
%In order to provide a more precise statement of ours results, we fix integers $c,k\geq 1$.
%is our first main result:

%The next theorem is about non-generic unfoldings of tangencies of
%any codimension  and thus contains the codimension one case.
%Therefore we obtain, as a consequence, the result of Berger but
%for families of diffeomorphisms in dimension $m\geq 3$:

%\vspace{-0.05cm}

\begin{mainthm}
\label{thmC} Any manifold of dimension $m> c^2+c$ admits a
$k$-parameter $C^d$-family of $C^r$-diffeomor\-phisms with
{$0<d<r-1$}, which  {displays}  a $C^{d,r}$-robust
$C^d$-{degenerate} unfolding of a homoclinic tangency of
codimension $c>0$ at any parameter.
\end{mainthm}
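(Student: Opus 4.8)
The plan is to promote the single-diffeomorphism construction behind Theorem~\ref{thmD} to a parametric one, replacing the blender in the tangent bundle by a \emph{parablender} acting on jets in both the tangent directions and the parameter. First I would fix the local model produced in the proof of Theorem~\ref{thmD}: a diffeomorphism $f_0$ with a transitive hyperbolic set $\Lambda_0$, a folding manifold realizing a codimension-$c$ tangency at $Y$ between $W^u(P_0)$ and $W^s(Q_0)$, and a blender living in the Grassmannian bundle $\mathrm{Gr}_c(T\mathcal M)$ of $c$-planes, whose covering property makes both the codimension and the alignment of the $c$-dimensional subspaces $E_0\subset T_YW^u(P_0)$, $F_0\subset T_YW^s(Q_0)$ robust. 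The point is that the tangency data is already encoded as an orbit captured by a covering contracting IFS on $\mathcal M\times \mathrm{Gr}_c(T\mathcal M)$; I only need to add one more layer to that IFS.

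Next I would embed $f_0$ into a $k$-parameter $C^d$-family $(f_a)_a$, with $k$ chosen large, so that the generating contractions $\phi_{i,a}$ of that IFS depend on $a\in\mathbb I^k$, and then pass to the induced dynamics on the space of $d$-jets in $a$ of sections of the Grassmannian bundle. Following Berger's mechanism~\cite{Ber16}, the goal is to arrange the $\phi_{i,a}$ (affine in the jet coordinates, with sufficiently overlapping images) so that this induced jet-IFS is itself \emph{covering}: its activation domain contains a non-empty open set of jets, robustly in the $C^{d,r}$-topology, by the standard covering criterion for blenders. This single step controls the position and the tangent alignment simultaneously: a covering jet-IFS guarantees, for the zero target jet, a combination of branches whose orbit realizes both $d(p_a,q_a)=o(\|a\|^d)$ and $d(E_a,F_a)=o(\|a\|^d)$ at $a=0$, which is exactly the definition of a $C^d$-degenerate unfolding of a codimension-$c$ tangency.

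Then I would verify that this covering property is open and transfers back to the tangency problem. For any family $g=(g_a)_a$ in a small $C^{d,r}$-neighborhood $\mathscr U$, the continuations of $\Lambda_0$, of the folding manifold, and of the Grassmannian blender persist; the perturbed jet-IFS remains $C^0$-close to a covering one and hence is still covering, so the zero jet is again captured. Translating through the folding manifold produces $p_a\in W^u_{loc}(P_a)$, $q_a\in W^s(Q_a)$ and $c$-planes $E_a,F_a$ varying $C^d$-continuously with $p_0=q_0=Y$ and the two $o(\|a\|^d)$ estimates. Finally, to reach the conclusion \emph{at any parameter}, I would build the jet-covering so that it holds for the Taylor jet based at every $a_0\in\mathbb I^k$, not only at the origin; this is Berger's route to persistent tangencies and yields a degenerate codimension-$c$ tangency at each base parameter.

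The step I expect to be the main obstacle is the construction and verification of the covering property for this combined jet-IFS, which must handle at once the $c$-Grassmannian directions (for the codimension-$c$ alignment) and the $d$-jet-in-$a$ directions (for the degeneracy). The phase-space bound $m>c^2+c$ is inherited from Theorem~\ref{thmD}, since it is what provides room for a folding manifold carrying the tangent-bundle blender, while the degeneracy is absorbed by enlarging the number of parameters $k$ and of contracting branches so that the images cover jet space. Checking that these overlaps persist under $C^{d,r}$-perturbations, while keeping the distortion small enough that the induced map on jets stays $C^d$ — which forces $d<r-1$, one derivative being consumed in passing to the Grassmannian bundle — is the technical heart of the argument.
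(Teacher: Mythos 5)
Your proposal follows essentially the same route as the paper: it reuses the Theorem~\ref{thmD} package (affine blender, folding manifold, induced blender on the Grassmannian bundle of $u$-planes) and then upgrades it to a parablender by applying the covering criterion to the induced dynamics on the space of $d$-jets in the parameter over the Grassmannian bundle, with the folding manifold supplying the disc in the superposition region and the loss $d<r-1$ coming from needing the jet-lifted Grassmannian dynamics to remain $C^1$. The only cosmetic difference is phrasing the mechanism as a covering IFS rather than via the paper's superposition regions and horizontal discs, and a minor imprecision in saying the number of parameters $k$ must be enlarged (it is the number of contracting branches, which depends on $k$ and $d$, that grows); otherwise this matches the paper's proof via Theorems~\ref{thm-paratangencies} and~\ref{thm-BR19}.
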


% \vspace{-0.15cm}
%
% It is unknown whether there are open sets of $C^d$-non-generic
% unfoldings of tangencies for $C^r$-diffeomorphisms with
% { $d=r-1$ or} $d=r\geq 1$, even in the case of
% codimension $c=1$. However, Berger proved in \cite{Ber17} the
% existence of such open sets with $ d = r \geq 1$  in unfoldings of
% local diffeomorphisms (endomorphisms) of surfaces. {
% N\~o modificar a frase. Berger n\~ao provo explicitamente nada
% para diffeos!!!}

 We will also show the existence of robust {degenerate}
unfoldings of heterodimensional cycles of any co-index at any
parameter:

\begin{mainthm}
\label{thmA} Any manifold of dimension $m> 1+c$ admits a
$k$-parameter $C^d$-family of $C^r$-diffeomor\-phisms with {$0<d<
r$} which  {displays} a $C^{d,r}$-robust $C^d$-{degenerate}
unfolding of a heterodimensional cycle of co-index $c>0$ at any
parameter.
\end{mainthm}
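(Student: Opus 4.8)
The plan is to realize the heterodimensional cycle inside a low-dimensional local model and then make the distinguished cyclic intersection both robust and slowly unfolding by installing a \emph{parablender} --- a blender for the dynamics induced on the space of $C^d$-jets in the parameter --- inside one of the two hyperbolic sets. The key structural simplification compared with Theorem~\ref{thmC} is that the degenerate condition for a cycle only requires the heteroclinic points to agree to order $d$, namely $d(p_a,q_a)=o(\|a\|^d)$, with no accompanying tangent-direction condition $d(E_a,F_a)=o(\|a\|^d)$. Hence I would need no dynamics on a Grassmannian bundle, which is precisely why the \emph{linear} bound $m>1+c$ and the full range $0<d<r$ suffice here.

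First I would fix a minimal skeleton. Since $m>1+c$, I can take hyperbolic sets $\Lambda_0,\Gamma_0$ with $\mathrm{ind}^s(\Lambda_0)-\mathrm{ind}^s(\Gamma_0)=c$, represented by saddles $P_0\in\Lambda_0$ and $Q_0\in\Gamma_0$, and arrange a heterodimensional cycle consisting of a transverse heteroclinic leg $W^u(Q_0)\pitchfork W^s(P_0)$ (of dimension $c$) together with the distinguished quasi-transverse heteroclinic $Y\in W^u(P_0)\cap W^s(Q_0)$ of codimension $c$. Linearizing near the two saddles and choosing affine transition maps, I reduce the persistence of the connection $W^u(P_a)\cap W^s(Q_a)$ to a covering problem: a $C^d$-family $a\mapsto D_a$ of $u_P$-dimensional disks carried by $W^u(P_a)$, with $u_P=\dim W^u(P_0)=m-\mathrm{ind}^s(\Lambda_0)$, must meet the local stable set of the continuation $\Gamma_a$, even though $u_P$ falls short of $\mathrm{codim}\,W^s(\Gamma_0)$ by exactly $c$.

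Next I would make $\Gamma_0$ a blender of the appropriate co-index $c$, so that this covering problem has a positive solution that is robust in the $C^r$-topology at $a=0$: the defining iterated-function-system covering property of a co-index $c$ blender forces $W^s_{loc}(\Gamma)$ to meet every disk of dimension $u_P$ lying in a fixed region, although $u_P$ is smaller than the expected value. This is the Bonatti--D\'iaz mechanism producing robust co-index $c$ cycles, realized here with enough smoothness to survive in $C^r$. The heart of the argument is then to upgrade the blender to a \emph{parablender}. Following Berger, I would lift the IFS to the bundle of $C^d$-jets of parameter families and verify that the lifted IFS retains a covering property: any $C^d$-family $a\mapsto D_a$ whose jet lies in the covering region is intersected by the lifted stable set so as to yield points $p_a\in W^u_{loc}(P_a)$ and $q_a\in W^s(Q_a)$ agreeing to order $d$, that is $d(p_a,q_a)=o(\|a\|^d)$, varying $C^d$-continuously with $p_0=q_0=Y$. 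Since the jet-space covering property is an open condition in the $C^{d,r}$-topology, every $g=(g_a)_a$ in a suitable neighborhood $\mathscr{U}$ inherits a continued parablender, hence a $C^d$-degenerate unfolding, giving the asserted $C^{d,r}$-robustness. To obtain the degenerate unfolding \emph{at any parameter} $a_0\in\mathbb{I}^k$, I would design the jet-space IFS so that its covering region is invariant under re-centering of the jets, i.e. the covering holds for $d$-jets based at every $a_0$, making the conclusion hold simultaneously at all parameters.

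The main obstacle I anticipate lies in the parablender step. Verifying that the blender's covering property genuinely lifts to the space of $C^d$-jets requires careful contraction and bounded-distortion estimates for the induced IFS on jets, and the delicate point is to arrange these estimates to be \emph{uniform over all base parameters} $a_0$, which is exactly what the ``at any parameter'' conclusion demands; throughout, the cycle geometry must be kept compatible with these estimates so that the co-index $c$ connection is preserved.
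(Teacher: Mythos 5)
Your proposal follows essentially the same route as the paper: build an affine blender via the covering criterion, lift the fibered IFS to the space of $C^d$-jets in the parameter to obtain a parablender (with the covering property arranged uniformly over all base parameters $a_0$ for the ``any parameter'' conclusion), and arrange the cycle so that one invariant manifold of the periodic point carries a constant family of almost-horizontal discs in the superposition region, whence the quasi-transverse connection unfolds $C^d$-degenerately and robustly; your correct observation that no Grassmannian lift is needed is exactly why the paper gets $m>1+c$ and $0<d<r$ here. The only (immaterial) difference is that you place the discs in $W^u(P_a)$ against the blender's local stable set, whereas the paper uses $W^s(P_a)$ against $W^u_{loc}(\Gamma_g)$ --- a mirror-image of the same argument.
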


{The differences in the regularity and dimension that appear in
the above theorems come from the  nature of the unfolding of the
 {tangency}, as we now explain. With respect to the
regularity assumption in Theorem~\ref{thmA}, the unfolding of a
heterodimensional cycle only deals with the distance in the
ambient manifold. There is a loss of a derivative ($d<r$) in the
moment that we pass to study the kinematic of the movement by
lifting the family to the space of velocities (jet space). This is
because we will need that the induced dynamics in the jet space is
a $C^1$-diffeomorphism. On the other hand, the unfolding of
tangencies in Theorem~\ref{thmC} requires first to lift the
dynamics to the space where the bifurcation is produced, that is
to the Grassmannian bundle.
%as we already indicated and did in Theorem~\ref{thmD}.
After that one needs to perform a similar analysis in the space of
velocities to study the corresponding $C^d$-{degenerate}
unfolding. This provides a loss of two degrees of regularity
($d<r-1$) as is claimed in Theorem~\ref{thmC}. Moreover, we will
need to use Theorem~\ref{thmD} to obtain Theorem~\ref{thmC}, thus
explaining the dimension of the manifold, $m> c^2+c$, as is
related to the codimension of the tangency.} %\vspace{-0.25cm}

\subsection{Topology of families of diffeomorphisms}
\label{sec:topology} Set $\mathbb{I}=[-1,1]$. Given $0<d \leq
r\leq \infty$, $k\geq 1$ and a manifold $\mathcal{M}$, we denote
by $C^{d,r}(\mathbb{I}^k,\mathcal{M})$ the space of $C^d$-families
$f=(f_a)_a$ of $C^r$-diffeomorphisms $f_a$ of $\mathcal{M}$
parameterized by $a\in\mathbb{I}^k$ such that
$$
  \partial^i_a \partial^j_x f_a(x) \ \ \text{exists continuously
  for all $0\leq i\leq d$, \ \   $0\leq i+j\leq r$  \ \  and \ \
   $(a,x)\in \mathbb{I}^k\times \mathcal{M}.$}
$$
%We endow this space with the compact open topology with respect to
%the considered derivatives.
 {We endow this space with the topology given by the
$C^{d,r}$-norm
$$
\|f\|_{{C}^{d,r}}=\max\{\sup \|\partial^i_a\partial_x^j f_a (x):
\, 0\leq i \leq d, \  0\leq i + j \leq r\} \quad \text{where \
$f=(f_a)_a \in {C}^{d,r}(\mathbb{I}^k,\mathcal{M})$.}
$$
}%
%If $d = r$, we will say that the family is of class $C^r$. Note
%that a family $f=(f_a)_a$ is of class $C^r$, if and only if, the
%map $(a, x) \mapsto f_a(x)$ is of class $C^r$. }
%By a slight  abuse of notation,
%notice that $
%   C^{d,r}(\mathbb{I}^k,\mathcal{M},\mathcal{N})
%   \subset C^d(\mathbb{I}^k,C^r(\mathcal{M},\mathcal{N})).
%$
%Among these spaces there are three subspaces with which we will
%mainly deal:
%$$
%\text{$C^d(\mathbb{I}^k, \mathrm{End}^r(\mathcal{M}))$, \quad
%$C^{d}(\mathbb{I}^k,\mathrm{Diff}^r(\mathcal{M}))$ \quad  and
%\quad
%$C^{d}(\mathbb{I}^k,\mathrm{Emb}^r([0,1]^{n},\mathcal{M}))$.}
%$$
%This will be referred to as $C^d$-families of
%$C^r$-endo\-mor\-phisms of $\mathcal{M}$, $C^d$-families of
%$C^r$-diffeomor\-phisms of $\mathcal{M}$ and $C^d$-families of
%$C^r$-embedding $n$-dimensional discs into $\mathcal{M}$
%respectively.
%In fact,
In what follows we restrict our attention to $C^{d}$-families
$f=(f_a)_a$ of $C^r$-diffeomorphisms $f_a$ of a manifold
$\mathcal{M}$ of dimension $m\geq 3$.
%for $d\geq 1$ and $r\geq 2$.
%The case of endomorphisms of dimension 2 was intensively dealt
%in~\cite{Ber16,Ber17,Ber18,BCP17}.

%\vspace{-0.5cm}

\subsection{Structure of the paper}
%In the next section
%we provide formally the general notion of non-generic unfoldings and explain the meaning of bifurcation and codimension.
Section \S\ref{sec:blender} contains the definition of a
\emph{blender}, one of the main tools in this paper.
%Moreover, we
%also study in this section the \emph{$C^0$-stability} of blenders.
%In this
%section we use result from~\cite{BKR14,BR17} to yield the
%important prototypical example of blender: the \emph{affine
%blender}.
In section~\S\ref{sec:tangencias} we prove Theorem~\ref{thmD}.
After that, we describe formally the notion of {degenerate}
unfoldings in section~\S\ref{sec:unfoding}. In
\S\ref{sec:parablender} we recall and develop the notion of
\emph{parablenders}, the second main tool of the paper. Finally in
sections \S\ref{sec:cycles} and \S\ref{sec:final} we prove
{Theorems}~\ref{thmA} and~\ref{thmC} respectively.
%{To conclude
%the paper, we will show in \S\ref{sec:berger-domain} the existence
%of Berger domains from Theorem~\ref{thmC}.}

\section{Blenders}
\label{sec:blender} We attribute the following definition to
Bonatti and D\'iaz (see~\cite{BBD16}).  Blenders were initially
defined having  {central dimension $c=1$}
(see~\cite{BD96,BDV05,BD12}) and blenders with large
 {central dimension} were first studied
in~\cite{NP12,BKR14,BR17}.  {After that, they  also appeared
in~\cite{BR18,ACW17} and in holomorphic dynamics
in~\cite{biebler2016persistent,dujardin2017non,taflin2017blenders}.}

\begin{defi} \label{def:blender} Let $f$ be a  $C^r$-diffeomorphism of a manifold $\mathcal{M}$.
A {non-empty} compact set $\Gamma \subset \mathcal{M}$ is a
\emph{$cs$-blender} of  {central dimension $c\geq 1$} if
\begin{enumerate}
\item $\Gamma$ is a transitive, {maximal invariant hyperbolic set
in the closure of a neighborhood $\mathcal U$ having a partially
hyperbolic splitting}
$$
   % \Gamma = \bigcap_{n\in\mathbb{Z}} f^n(\overline{\mathcal{U}}) \quad
    %\text{and} \quad
    \text{$T_\Gamma \mathcal{M}= %E^s\oplus E^u=
    E^{ss} \oplus E^c \oplus E^{u}$}
$$
%with a partially hyperbolic splitting
%$
%  T_\Gamma \mathcal{M} = E^{ss} \oplus E^c \oplus E^{u}
%$
where $E^s=E^{ss}\oplus E^c$ is the stable bundle,
%$E^u$ is the unstable bundle and
${d_{ss}}=\dim E^{ss}\geq 1$ and $c=\dim E^{c}\geq 1$, %$u=\dim E^u\geq 1$,
\item  there exists a {non-empty} open set $\mathscr{D}$ of $C^1$-embeddings of
$d_{ss}$-dimensional discs into $\mathcal{M}$, and
\item there exists a $C^1$-neighborhood $\mathscr{U}$ of $f$,
\end{enumerate}
such that
$$
  W^u_{loc}(\Gamma_g) \cap \mathcal{D} \not = \emptyset \quad \text{for all
  $\mathcal{D}\in \mathscr{D}$ and $g\in \mathscr{U}$}
$$
where $\Gamma_g$ is the continuation of $\Gamma$ for $g$ and
{$W^u_{loc}(\Gamma_g)=\{x\in \mathcal{U}: g^{-n}(x)\in \mathcal{U}
\ \text{for all $n\geq 0$}\}$}. The set $\mathscr{D}$ is called
{a} \emph{superposition region} of the blender. Finally, a
\emph{$cu$-blender of  {central dimension} $c$} is $cs$-blender of
 {central dimension} $c$ for $f^{-1}$.
%Finally a \emph{$double$-blender of codimension $(cs,cu)$} is
%simultaneously a $cs$-blender of codimension $cs>0$ and
%$cu$-blender of codimension $cu>0$. Notice that this implies that
%a $double$-blender $\Gamma$ has a partially hyperbolic splitting
%of the form
%$$
%T_\Gamma \mathcal{M} = E^{ss}\oplus E^c \oplus E^{uu}
%$$
%where
%$E^{c}=E^{cs}\oplus E^{cu}$ with $\dim E^{cs}=cs$ and
%$\dim{E}^{cu}=cu$.
\end{defi}

The hyperbolicity of $\Gamma$ implies that given a point $x\in
W^u_{loc}(\Gamma) \cap \mathcal{D}$, there is a point $z\in
\Gamma$ such that $x \in W^u_{loc}(z)\cap \mathcal{D}$. Observe
that the local unstable manifold of $z$ is a $C^1$-embedded disc
of dimension  {$d_u=\dim E^u$} and $\mathcal{D}$ is a
 {$d_{ss}$}-dimensional disc. These two discs  are
in {\emph{relative general position}} if it holds that
$$
   T_x W^u_{loc}(z) + T_x \mathcal{D} = T_x W^u_{loc}(z)\oplus
   T_x\mathcal{D}.
$$
In this case,
%according to {  ~\eqref{codimension}},
we have an {intersection} of codimension
$$
  c_x(W^u_{loc}(z),\mathcal{D})=\dim \mathcal{M} - \dim(T_x W^u_{loc}(z)+ T_x\mathcal{D}) =
  \dim \mathcal{M} -  {(d_u+ d_{ss})}=c \geq 1.
$$
 {Thus, $W^u_{loc}(z)$ and $\mathcal{D}$
have a tangency of codimension at least $c$, which is in general,
a quasi-transverse intersection of codimension exactly $c$.}

%For this reason the $cs$-blender is said to have
%\emph{codimension $c>0$}.

\subsection{Covering criterium} \label{sec:covering-criterium}
In \cite{BKR14,BR17} blenders of large  {central dimension} were
constructed by using the covering criterium. Namely, we {consider}
$C^1$-diffeomorphisms %of a product manifold
which are locally defined as a skew-product as explained below.

First, we consider a $C^1$-diffeomorphism $F$ of a manifold
$\mathsf{N}$ having a horseshoe $\mathsf{\Lambda}$ {contained in a
local chart} which is the maximal $F$-invariant  set in the
closure of some bounded open set $\mathsf{R}$ of $\mathsf{N}$. The
horseshoe has stable index (dimension of the stable bundle) equal
to $d_{ss}=\mathrm{ind}^s(\mathsf{\Lambda})>0$  and satisfies that
\begin{enumerate}
\item $F|_{\Lambda}$ is conjugate to a shift of $\kappa$-symbols
and
\item there exists $0<\nu<1$ such that
\begin{equation} \label{eq:nu}
     m(DF(x)) \leq \nu < 1<\nu^{-1} \leq \|DF(x)\| \qquad \text{for all $x\in
     \mathsf{\Lambda}$.}
\end{equation}
\end{enumerate}
Here $m(T)=\|T^{-1}\|^{-1}$ denotes the co-norm of a linear
operator $T$. Let $\{\mathsf{R}_1,\dots,\mathsf{R}_\kappa\}$ be an
open covering of $\Lambda$, whose intersection with $\Lambda$ is a
Markov partition. There is no loss of generality in assuming that
$\mathsf{R}=\mathsf{R}_1\cup\dots\cup \mathsf{R}_\kappa$
 {with} $\mathsf{R}_\ell=(-2,2)^{d_{ss}}\times
I_\ell$, where $I_\ell$ is a product of  {$d_u$} open intervals in
$[-2,2]$ with $\dim \mathsf{N}= d_{ss}+  {d_{u}}$ for
$\ell=1,\dots,\kappa$.  {Moreover, from the hyperbolicity of
$\Lambda$, we can assume that there is a $DF^{-1}$-invariant
cone-field on $\mathsf{R}$:
\begin{enumerate}[start=3]
\item there exist $\alpha>0$ such that
$$DF^{-1}(x)\mathcal{C}^{ss}_\alpha \subset
\mathcal{C}^{ss}_{\nu^2\alpha} \quad \text{for all
   $x\in \mathsf{R}$.}
$$
\end{enumerate}
Here, for a given $\theta>0$, we denote
\begin{equation} \label{strong-stable-cone}
  \mathcal{C}^{ss}_\theta\eqdef %\left
  \{(u,v)\in
  \mathbb{R}^{d_{ss}}\oplus
  \mathbb{R}^{d_u}: \, \|v\|<\theta\|u\|%\right
  \} \cup \left\{0\right\}.
\end{equation}
%That is,  $\mathcal{C}^{ss}_\alpha$ on $\mathsf{R}$ is
%$DF^{-1}$-invariant.
We will call $\mathcal{C}^{ss}_\alpha$ as a \emph{stable cone-field
on $\mathsf{R}$ of $F$} and refer to the parameter
$\alpha$ as the \emph{width} of the cone. }

Now take $C^1$-diffeomorphisms $\phi_1,\dots,\phi_\kappa$ of
another manifold $M$ of dimension $c>0$, which are local
$(\lambda,\beta)$-contractions in a bounded open set $D \subset
M$, with $0<\lambda<\beta<1$:
$$
  \phi_\ell(\overline{D})\subset D \quad \text{and} \quad
%  \lambda d(x,y)<d(\phi_i(x),\phi_i(x)) < \lambda^{-1} d(x,y)
%  \quad
\lambda<m(D\phi_\ell(y))<\|D\phi_\ell(y)\|<\beta<1 \quad \text{for
all $y\in \overline{D}$ \  and \ $\ell=1,\dots,\kappa$.}
$$
Finally, we consider a $C^1$-diffeomorphism $\Phi$ of
$\mathcal{M}=\mathsf{N}\times M$ locally defined as a skew-product
$$
  \Phi=F\ltimes (\phi_1,\dots,\phi_\kappa) \quad \text{on} \ \
  \mathcal{U}=(\mathsf{R}_1 \times D) \cup \dots \cup (\mathsf{R}_\kappa \times D)
$$
so that
$$
 \Phi(x,y)=(F(x),\phi(x,y)) \quad \text{with \ \ $\phi(x,y)=\phi_\ell(y)$ \ \ if \ \ $(x,y)\in \mathsf{R}_\ell\times
 D$.}
$$

\begin{notation}
In  the rest of the paper, we will use the notation
$$
\Psi=G\ltimes(\psi_1,\dots,\psi_\kappa) \quad \text{on} \ \
\mathcal{V}=\mathcal{V}_1\cup \dots \cup \mathcal{V}_\kappa
$$
to define the skew-product map $\Psi(x,y)=(G(x),\psi_\ell(y))$
with $(x,y)\in \mathcal{V}_\ell$ for $\ell=1,\dots,\kappa$, {where
$\mathcal{V}_1, \dots, \mathcal{V}_\kappa$ are pairwise disjoint
sets.}
\end{notation}

The following theorem {from~\cite[Thm.~C]{BKR14} and
\cite[Thm.~3.8]{BR17}} shows that under the assumption of
domination and the covering criterium, the map $\Phi$ has a
$cs$-blender of  {central dimension $c\geq 1$}.

\begin{thm}%[\cite{BKR14,BR17}]
\label{thmBKR} Let $\Phi$ be a $C^1$-diffeomorphism of a manifold
$\mathcal{M}$ locally defined as a skew-product $\Phi=F\ltimes
(\phi_1,\dots,\phi_\kappa)$ on $\mathcal{U}$ as above. Assume that
\begin{enumerate}[itemsep=0.05cm]
\item \label{item:domination} the hyperbolic base $F|_{\mathsf{\Lambda}}$ dominates the
fiber dynamics $\phi_\ell$, i.e, it holds that $\nu<\lambda$,
\item \label{item:cover} there exists an open
set $B \subset D$ such that $
   \overline{B} \subset  \phi_1(B) \cup \dots \cup \phi_\kappa(B).
$
\end{enumerate}
Then the maximal invariant set $\Gamma$ of $\Phi$ in
$\overline{\mathcal{U}}$ is a $cs$-blender of
 {central dimension} $c$.  {The superposition
region of the blender is the family of $( {\alpha,}
\nu,\delta)$-horizontal, $d_{ss}$-dimensional $C^1$-discs into
$\mathcal{B}=\mathsf{R}\times B$, where $0<\delta<\lambda L/2$
and $L>0$ is the Lebesgue number of the cover of $B$
in~\eqref{item:cover}. }
\end{thm}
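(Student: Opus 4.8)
The plan is to verify the three requirements of Definition~\ref{def:blender} for the maximal invariant set $\Gamma=\bigcap_{n\in\mathbb{Z}}\Phi^n(\overline{\mathcal{U}})$, the heart of the matter being the superposition property, which I obtain from a graph-transform argument driven by the covering hypothesis~\eqref{item:cover}. I first settle the partially hyperbolic structure and transitivity required in item (1) of the definition. Because $\phi(x,y)=\phi_\ell(y)$ does not depend on $x$ on each $\mathsf{R}_\ell\times D$, the derivative $D\Phi$ is block diagonal, so the fibre direction $E^c=\{0\}\oplus TM$ is $D\Phi$-invariant and contracted with rate in $(\lambda,\beta)$, while $T\mathsf{N}$ splits as $E^{ss}\oplus E^u$ by the hyperbolicity of $\mathsf{\Lambda}$. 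The domination hypothesis $\nu<\lambda$ gives $E^{ss}\prec E^c$, and $\beta<1<\nu^{-1}$ gives $E^c\prec E^u$; since $E^{ss}$ and $E^c$ are both contracting, $E^s=E^{ss}\oplus E^c$ is the stable bundle and $\Gamma$ carries the stated splitting with $\dim E^{ss}=d_{ss}$ and $\dim E^c=c$. For transitivity I would note that, the $\phi_\ell$ being contractions of $\overline D$ into $D$, every backward itinerary selects a unique fibre point, so $\Gamma$ is the graph of a continuous map $h\colon\mathsf{\Lambda}\to M$ and the projection $\Gamma\to\mathsf{\Lambda}$ is a conjugacy; transitivity of $\Gamma$ then reduces to that of the shift $F|_{\mathsf{\Lambda}}$.

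Next I fix the superposition region. Call a $C^1$-embedded $d_{ss}$-disc $\mathcal{D}\subset\mathcal{B}=\mathsf{R}\times B$ \emph{$(\alpha,\nu,\delta)$-horizontal} if it is a graph $u\mapsto(u,\gamma(u),\sigma(u))$ over $u\in(-2,2)^{d_{ss}}$ whose base part is tangent to $\mathcal{C}^{ss}_\alpha$ (so it crosses $\mathsf{R}$ in the strong-stable direction) and whose fibre part obeys $\|D\sigma\|\le\delta$ with $\sigma(u)\in B$. These cone and slope conditions are open, so the family $\mathscr{D}$ of such discs is a non-empty open set of embeddings (item (2)), and I take $\mathscr{U}$ to be a $C^1$-neighbourhood of $\Phi$ on which the constants $\nu<\lambda<\beta<1<\nu^{-1}$, the cone invariance $DF^{-1}\mathcal{C}^{ss}_\alpha\subset\mathcal{C}^{ss}_{\nu^2\alpha}$, and the covering~\eqref{item:cover} persist (item (3)), the continuation $\Gamma_g$ being again a graph over $\mathsf{\Lambda}_g$.

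The core step is a graph transform showing that $\mathscr{D}$ is invariant under suitable inverse branches of $\Phi$. Given $\mathcal{D}\in\mathscr{D}$ with fibre footprint near $y\in\overline B$ and an index $\ell$ with $y\in\phi_\ell(B)$, I form the branch $\mathcal{D}'$ of $\Phi^{-1}(\mathcal{D})$ lying in $\mathsf{R}_\ell\times D$. The Markov property of the horseshoe together with the cone invariance $DF^{-1}\mathcal{C}^{ss}_\alpha\subset\mathcal{C}^{ss}_{\nu^2\alpha}$ guarantees that $\mathcal{D}'$ again crosses $\mathsf{R}_\ell$ horizontally as a graph tangent to $\mathcal{C}^{ss}_\alpha$. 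Its fibre part is $\sigma'=\phi_\ell^{-1}\circ\sigma\circ F$, and since $\phi_\ell^{-1}$ expands $M$-distances by at most $\lambda^{-1}$ while $F^{-1}$ expands the strong-stable base by at least $\nu^{-1}$, the chain rule yields $\|D\sigma'\|\le(\nu/\lambda)\,\delta\le\delta$; this is exactly where domination $\nu<\lambda$ is used. The new footprint sits near $\phi_\ell^{-1}(y)\in B$, so $\mathcal{D}'\in\mathscr{D}$, and by construction $\Phi(\mathcal{D}')\subset\mathcal{D}$.

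Finally I run the covering argument. Starting from $\mathcal{D}_0=\mathcal{D}$ with footprint near $y_0\in\overline B$, the bound $\delta<\lambda L/2$ keeps each footprint of small diameter, hence inside a single element $\phi_{\ell_n}(B)$ of the cover by the Lebesgue-number property; applying the graph transform repeatedly produces discs $\mathcal{D}_n\in\mathscr{D}$ with $\Phi(\mathcal{D}_{n+1})\subset\mathcal{D}_n$ and $\mathcal{D}_n\subset\mathcal{U}$. The sets $K_n=\Phi^n(\mathcal{D}_n)\subset\mathcal{D}_0$ are nested non-empty compacta, so any $p\in\bigcap_nK_n$ lies in $\mathcal{D}$ and satisfies $\Phi^{-j}(p)\in\mathcal{D}_j\subset\mathcal{U}$ for all $j\ge0$, whence $p\in W^u_{loc}(\Gamma)\cap\mathcal{D}$. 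Since every estimate involved is $C^1$-open, the same scheme applies verbatim to each $g\in\mathscr{U}$ with $\Gamma_g$, giving the robust superposition property. The step I expect to be the main obstacle is precisely this graph transform: keeping the transformed disc simultaneously horizontal in the base (cone invariance plus Markov geometry) and flat over $B$ in the fibre (the slope estimate $\|D\sigma'\|\le(\nu/\lambda)\delta$), while the condition $\delta<\lambda L/2$ reconciles the disc's nonzero fibre footprint with the pointwise covering~\eqref{item:cover}.
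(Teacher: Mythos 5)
A point of orientation first: the paper does not prove Theorem~\ref{thmBKR} at all — it is imported from \cite[Thm.~C]{BKR14} and \cite[Thm.~3.8]{BR17}. Your overall strategy (a graph transform on almost-horizontal discs along inverse branches selected by the covering hypothesis~\eqref{eq:cover}, followed by a nested-compacta argument producing a point of $W^u_{loc}(\Gamma_g)\cap\mathcal{D}$, with all estimates $C^1$-open) is exactly the strategy of those references, and your treatment of the splitting, of transitivity via the graph structure of $\Gamma$ over $\mathsf{\Lambda}$, and of the choice of $\mathscr{U}$ is sound.

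There is, however, one genuine mismatch that makes your argument prove a strictly weaker statement than the one claimed. You redefine an $(\alpha,\nu,\delta)$-horizontal disc by the slope bound $\|D\sigma\|\le\delta$ together with $\sigma(u)\in B$, whereas Definition~\ref{def:almost-horizontal-disc} requires only $d(y,h(\xi))<\delta$ for some $y\in B$ together with $C\nu<\delta$ for a Lipschitz constant $C$ of $h$; that is, the fibre derivative is allowed to be of order $\delta/\nu\gg\delta$ provided the base contraction $\nu$ is strong enough. This is not cosmetic: the theorem is applied in Lemma~\ref{lem-induced-folding-manifold} to the discs $\mathcal{H}^{G}$ induced by folding manifolds, which the paper explicitly notes are $C^0$-close but $C^1$-\emph{far} from horizontal (indeed $\|Dh^{G}\|_\infty$ is of order $1$ by Proposition~\ref{prop1:appendix}, while $\delta<\lambda L/2$ may be small). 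Such discs do not belong to your superposition region, so your version of the theorem could not be invoked later in the paper. A related quantitative slip: with only a slope bound, the fibre footprint of your disc has diameter up to $4\sqrt{d_{ss}}\,\delta$ rather than $2\delta$, so the Lebesgue-number step does not follow from $\delta<\lambda L/2$ alone; it is precisely the $C^0$ condition $d(y,h(\xi))<\delta$ of the paper's definition that confines the footprint inside a single $\phi_\ell(B)$. The fix is to run your graph transform on the class of Definition~\ref{def:almost-horizontal-disc} verbatim, tracking through the induction both the Lipschitz constant of the fibre part (which your own computation shows is multiplied by $\nu/\lambda<1$ at each backward step, so $C\nu<\delta$ is preserved) and the distance of the footprint to the backward orbit of $y$ under the fibre maps; this is where the hypotheses $\nu<\lambda$ and $\delta<\lambda L/2$ are actually consumed in \cite{BKR14,BR17}.
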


The open set $\mathcal{B}=\mathsf{R}\times B$ of $\mathcal{M}$ is
called {a \emph{superposition domain}. Also, in the above theorem
appears the notion of a family of $( {\alpha,}
\nu,\delta)$-horizontal discs in $\mathcal{B}$ that we define as
follows.}

{%In this article,
 A proper $C^r$-embedded $d_{ss}$-dimensional
disc $\mathcal{D}$ into $\mathcal{B}=\mathsf{R}\times B$ (or a
\emph{$d_{ss}$-dimensional $C^r$-disc in $\mathcal{B}$} for short)
will be an injective $C^r$-immersion $\mathcal{D}: [-2,2]^{d_{ss}}
\to \overline{\mathcal{B}}$ of the form,
$$
   \mathcal{D}(\xi)=(\xi,g(\xi),h(\xi))\in [-2,2]^{d_{ss}}\times I_\ell \times
   B \quad \text{for} \ \ \xi \in [-2,2]^{d_{ss}} \ \text{and some} \ \ell\in \{1,\dots,\kappa\}.
$$
%for some $\ell\in \{1,\dots,\kappa\}$.
As usual, we will identify the embedding $\mathcal{D}$ with its
image $\mathcal{D}([-2,2]^{d_{ss}})$.} %\\
%Moreover, from this identification follows that the notion
%of folding manifold is $C^2$-open.

\begin{defi} \label{def:almost-horizontal-disc}
We say that a $d_{ss}$-dimensional $C^1$-disc $\mathcal{D}$ in
$\mathcal{B}=\mathsf{R}\times B$  is \emph{{$( {\alpha,}
\nu,\delta)$}-horizontal} if
\begin{enumerate}[itemsep=0.1cm]
\item \label{disc1}  {$\|Dg\|_\infty\leq \alpha$,}
\item \label{disc2} there is a point $y \in B$ such that
$d(y,h(\xi))<\delta$ for all $\xi\in [-2,2]^{d_{ss}}$,
%where
%$h=\mathscr{P}\circ \mathcal{D}$ and $\mathscr{P}:
%\mathsf{N}\times M \to M$ is the standard projection on the fiber
%space $M$;
\item \label{disc3} ${C\cdot\nu} < \delta$ where $C\geq 0$ is a Lipschitz constant
of $h$, i.e.,
$$d(h(\xi),h(\xi'))\leq C\, d(\xi,\xi') \quad \text{for all
$\xi,\xi'\in [-2,2]^{d_{ss}}$.}
%$\mathcal{D}(t)=(\mathcal{D}^b(t),\mathcal{D}^c(t))\in
%\mathsf{N}\times M$, and}
$$
%being $\mathscr{P}: \mathsf{N}\times M \to M$ the standard
%projection on the
%fiber spacer $M$, and % projetion
\end{enumerate}
\end{defi}

Since $\mathcal{D}$ is a $C^1$-disc notice that $C$ is any
positive constant satisfying $\|Dh\|_{\infty} \leq C$. If {$C=0$}
we say that $\mathcal{D}$ is \emph{horizontal}.
%\begin{rem}
%According to~{\cite[Thm.~A.2]{BR17}, the parameters $\nu>0$ and
%$\delta>0$ in the definition of $(\nu,\delta)$-horizontal discs
%for which Theorem~\ref{thmBKR} holds, depend on the Lebesgue
%number $L>0$ of the cover of $B$ in~\eqref{item:cover} and the
%contraction bound $\lambda<1$. Namely, $\nu$ must satisfy the
%domination assumption~\eqref{item:domination} (i.e.,~\eqref{eq:nu}
%with $\nu<\lambda$) and $\delta<\lambda L /2$.
 {Notice that by condition~\eqref{disc1}, the disc
$\mathcal{D}$ is tangent to the stable cone-field defined on
$\mathcal{B}$. Moreover, from condition~\eqref{disc2},
$\mathcal{D}$ is $C^0$-close to an horizontal disc. On the other
hand, although $\mathcal{D}$ may not be $C^1$-close to a
horizontal disc, condition~\eqref{disc3} asks that we still have a
good control of the distortion. Finally,} for a fixed
 {$\alpha$,} $\nu$ and $\delta$ under
 {the conditions in Theorem~\ref{thmBKR}}, the set
of $( {\alpha,}\nu,\delta)$-horizontal discs in $\mathcal{B}$ is
said to be, for short, the family of \emph{almost-horizontal
discs}.

In the next sections we  {construct} diffeomorphisms having robust
 {tangencies}  in any manifold of dimension $m\geq
3$.
%Since we will work in local coordinates, we
%may assume that the product manifold $\mathcal{M}=\mathsf{N}\times
%M$ is {an open subset of} $\mathbb{R}^m = \mathbb{R}^n \times
%\mathbb{R}^c$ with $n\geq 2$ and $c\geq 1$.
Our constructions will use the following particular class of
blenders  {obtained} from the covering criterium.

\subsection{Affine blender}
\label{sec:affine-blender} {We will introduce a class of
$C^r$-diffeomorphims $f$ of $\mathbb{R}^m = \mathbb{R}^n \times
\mathbb{R}^c$ with $r\geq 1$, $n=ss+u\geq 2$ and $c\geq 1$. To do
this, consider first} a $C^r$-diffeomorphism $F$ of $\mathbb{R}^n$
having a horseshoe $\mathsf{\Lambda}$ in the the open cube
$\mathsf{V}=(-2,2)^n$. The horseshoe has stable index
$ss=\mathrm{ind}^s(\mathsf{\Lambda})>0$ and {$F|_{\Lambda}$ is
conjugate to a full shift of a large number $\kappa$ of symbols to
be specified later.} We notice that this number will depend only
on the dimension $c$. For simplicity, assume that
$$
\mathsf{R}_\ell= (-2,2)^{ss} \times I_\ell, \quad
\ell=1,\dots,\kappa
$$ is a Markov partition of $\mathsf{\Lambda}$
where $I_\ell$ is an open disc in $[-2,2]^u$  and $F$ is affine on
each rectangle $\mathsf{R}_\ell$.  {More precisely}, there are
$0<\nu<1$ and linear maps $S_\ell: \mathbb{R}^{ss}\to
\mathbb{R}^{ss}$ and $U_\ell: \mathbb{R}^{u} \to \mathbb{R}^u$
such that
$$
 DF=
\begin{pmatrix}
    S_\ell & 0  \\
    0 & U_\ell
\end{pmatrix}
\quad  \text{on \ \ $\mathsf{R}_\ell$ \ \ where \ \ $\|S_\ell \|,\
\|U_\ell^{-1}\| < \nu$ \quad for $\ell=1,\dots,\kappa$.}
$$
 {Notice that,
$$DF^{-1}(x)\mathcal{C}^{ss}_{\alpha}
\subset \mathcal{C}^{ss}_{\nu^2\alpha} \quad \text{for all $x\in
\mathsf{R}=\mathsf{R}_1\cup\dots\cup \mathsf{R}_\kappa$ \ \ and \
\ $\alpha>0$,}
$$
where the cones $\mathcal{C}^{ss}_{\alpha}$ and
$\mathcal{C}^{ss}_{\nu^2\alpha}$ are defined as
in~\eqref{strong-stable-cone}.
%Thus, for any $\alpha>0$ we have that
In particular, the cone-field $\mathcal{C}^{ss}_\alpha$ is
$DF^{-1}$-invariant.}

Take affine $(\lambda,\beta)$-contractions
$\phi_1,\dots,\phi_\kappa$ on $D=(-2,2)^c$ with
$\nu<\lambda<\beta<1$. That is, $C^r$-diffeo\-morphisms
$\phi_\ell$ of $\mathbb{R}^c$ such that
$\phi_\ell(\overline{D})\subset D$ and there are linear maps
$T_\ell:\mathbb{R}^c\to \mathbb{R}^c$~so~that
$$
\text{$D\phi_\ell(y)=T_\ell$ for all $y\in\overline{D}$ and
$\lambda<m(T_\ell)\leq \|T_\ell\|<\beta$ for
$\ell=1,\dots,\kappa$.}
$$
%$$
%D\phi_i(y)=A_i \ \ \text{for all $y\in\overline{D}$} \quad
% \text{and}  \quad
%\lambda<m(A_i)\leq \|A_i\|<\beta, \quad i=1,\dots,\kappa.
%$$
%$$
%phi_i(y)= A_i y + c_i \ \ \text{for all
%$y\in\overline{D}$ and} \ \ \phi_i(\overline{D})\subset D \ \
%   \text{and} \ \ \lambda<m(A_i)\leq \|A_i\|<\beta, \quad i=1,\dots,\kappa.
%$$
Moreover, { we ask that there is an open set $B \subset D$}
containing the origin such that
\begin{equation}
\label{eq:cover}
%  B= B_1 \uplus \dots \uplus B_\kappa \quad \text{and} \ \
%  \phi^{-1}_\ell(B_\ell) \subset B_0 \subset \overline{B_0}
%  \subset B \ \ \text{for all $\ell=1,\dots,\kappa$.}
\overline{B}\subset  \phi_1(B) \cup \dots \cup \phi_\kappa(B).% \quad \text{and} \quad  C\nu < \lambda L/2
\end{equation}
%In particular, we have that
%\begin{equation*}
%\label{eq:cover}
%     \overline{B}\subset  \phi_1(B) \cup \dots \cup \phi_\kappa(B).% \quad \text{and} \quad  C\nu < \lambda L/2
%\end{equation*}
\begin{exap} \label{exa-afin-blender} %As an example
Take $\phi_{\pm}(t)=\lambda t \pm (1-\lambda)$ for $t\in [-2,2]$
with $1/2<\lambda<1$ and consider
$$
   \phi_\ell = \phi_{\ell_1}\times \dots\times \phi_{\ell_c} \quad \text{on \  $D=(-2,2)^c$ } \quad  \text{for any \ $\ell=(\ell_1,\dots,\ell_c)\in \{-,+\}^c$.}
$$
Observe that here $\kappa=2^c$. It is not difficult to see that
$B=(-1,1)^c$ satisfies~\eqref{eq:cover}.
\end{exap}

Finally we consider a $C^r$-diffeomorphism $\Phi$ of
$\mathbb{R}^m$ locally defined as the skew-product
$$
  \Phi=F\ltimes (\phi_1,\dots,\phi_\kappa) \quad \text{on} \ \
  \mathcal{U}=(\mathsf{R}_1 \times D) \cup \dots \cup (\mathsf{R}_\kappa \times
  D).
$$
%in such a way that
%$$
% \Phi(z,x)=(F(z),\phi(z,x)) \quad \text{with \ \ $\phi(z,x)=\phi_i(x)$ \ \ if \ \ $z\in R_i$.}
%$$
According to Theorem~\ref{thmBKR},  the maximal invariant set
$\Gamma$ in $\overline{\mathcal{U}}$ is a $cs$-blender of
 {central dimension}~$c>0$.  {
Moreover, the superposition region is the family of almost-horizontal $d_{ss}$-dimensional
$C^1$-discs in $\mathcal{B}=\mathsf{R}\times B$, where
$\mathsf{R}=\mathsf{R}_1\cup\dots\cup \mathsf{R}_\kappa$.}
%{In particular, any
%$(\textcolor{blue}{\alpha,}\nu,\delta)$-horizontal disc with $\delta<\lambda L/2$, where
%$L$ is the Lebesgue number of the cover~\eqref{eq:cover}, is in
%this family.}
% where we recall that
%$\mathsf{R}=\mathsf{R}_1\cup\dots\cup \mathsf{R}_\kappa$.
%More
%precisely, the set $\Gamma$ is in fact the maximal invariant set
%of $\Phi$ in the cube $\mathbb{C}=[-2,2]^m$ and without
%restricting we can take $\mathcal{B}=\mathsf{V}\times B$.

\section{Robust homoclinic tangencies}
\label{sec:tangencias} In this section
%we find a well-known result about $C^2$-robust homoclinic
%tangencies in higher dimension (\cite{PV94}, \cite{GTS93}). We
%will provide a different proof which is inspired in the
%construction of $C^1$-robust homoclinic tangencies of Bonatti and
%D\'iaz in~\cite{BD12} using folding manifolds. The difference here
%is that, as it was similarly done in~\cite{BR17}, we will
%construct blenders in the bundle tangent of the manifold to
%provide the robustness of the intersection between the unstable
%local manifolds and the folding manifold.
%
%\begin{thm} \label{thmBD} Any manifold of dimension $m\geq 3$ admits
%$C^r$-diffeomorphisms having a $C^2$-robust homoclinic tangency of
%codimension $c=1$.
%\end{thm}
%
we prove Theorem~\ref{thmD}. We provide the existence of
$C^r$-diffeomorphisms with { $r\geq 2$ having $C^2$-robust}
homoclinic tangencies  {of large codimension} by constructing
these objets in local coordinates. Thus, we {may consider}
$\mathbb{R}^m = \mathbb{R}^n \times \mathbb{R}^c$ with $n\geq 2$
and $c \geq 1$. Throughout this section, we ask that $n=ss+u$ and
$c=u^2$ but we keep the notation $u$, $c$ in order to distinguish
coordinates.  {We divide the proof into several parts and for the
convenience of the reader will explain next the ideas involved.}
%to guide along along the next subsections. For the convenience of the reader, we first explain here in a few word the idea behind the proof by providing a guidance along the next subsections.}

 {We study the homoclinic tangencies  of
%a dynamics
$f$ by analyzing the induced map $f^G$ on Grassmannian manifolds,
%To do this, first we explain in~\S\ref{sec:grasmanian} what is exactly this induced map
and we would like for this induced map to have a blender.
The main idea is to obtain  robust tangencies
for $f$ by means of a robust intersection between the local unstable
manifolds of a blender (for the induced dynamics)
%$f^G$
and a
particular disc in the superposition region.
%of this blender.
Hence,
in~\S\ref{sec:affine-blenders-grasmannian} we will construct a
class of $C^r$-diffeomorphisms of $\mathbb{R}^m$ which induces a
$cs$-blender $\Gamma^G$ on the Grassmannian manifold. This class
of diffeomorphisms are the locally defined skew-product maps
having an affine blender $\Gamma$ introduced
in~\S\ref{sec:affine-blender} with some additional restrictions.
Afterwards, we introduce in~\S\ref{sec:folding-manifold} the
notion of a folding manifold $\mathcal{S}$ in $\mathbb{R}^m$,
having the
main property
%of these class of manifolds is that they
of inducing a
disc $\mathcal{S}^G$ in the superposition region of $\Gamma^G$.
Finally, in \S\ref{sec:robust-tangency-folding-manifold} we show
how the robust intersection between the local unstable manifolds of
$\Gamma^G$ and the induced disc $\mathcal{S}^G$ provides a robust
tangency between the unstable manifolds of $\Gamma$ and the
folding manifold $\mathcal{S}$. One can see the folding manifold as a
piece of a leaf of the stable manifold of $\Gamma$  and then the proof  of
Theorem~\ref{thmD} can be concluded in~\S\ref{sec:prove-ThmD}.}
% After that, we construct }

\subsection{Grassmannian manifold}
\label{sec:grasmanian} Let $f$ be a $C^r$-diffeomorphism of
$\mathbb{R}^m$. We will consider an induced map by $f$ on the
Grassmannian manifold $G_u(\mathbb{R}^m)=\mathbb{R}^m\times
G(u,m)$ given by
$$
  {f}^{{G}} : G_u(\mathbb{R}^m) \to G_u(\mathbb{R}^m), \qquad
  {f}^{{G}}(x,E)=(f(x),Df(x)E)
$$
where $G(u,m)$ is the set of $u$-planes in $\mathbb{R}^m$. Notice
that $f^{{G}}$ is a $C^{r-1}$-diffeomorphism of
${G}_u(\mathbb{R}^m)$.
%Now, we will take a particular diffeomorphism of $\mathbb{R}^m$.

\subsection{Blender induced on the Grassmannian manifold} \label{sec:affine-blenders-grasmannian}
Fix $r\geq 2$. We will start by considering a
$C^{r}$-diffeomorphism $\Phi$ of $\mathbb{R}^m$ locally defined as
a skew-product $\Phi=F\ltimes (\phi_1,\dots,\phi_\kappa)$ and
having an affine $cs$-blender $\Gamma$, as
in~\S\ref{sec:affine-blender}. Notice that for each
$\ell=1,\dots,\kappa$, the differential map $D\Phi(x,y)$ is the
same linear map $D\Phi_\ell$ for all $(x,y)\in
\mathsf{R}_\ell\times D$. Moreover, $E^u=\{0^{ss}\}\times
\mathbb{R}^u \times \{0^c\}$ is an attracting fixed point of the
action of these maps on $G(u,m)$ with eigenvalues less than
$\beta\nu<1$. Let $\mathcal{C}^{{G}}$ be an open neighborhood of
$E^u$ in $G(u,m)$ so that $D\Phi_\ell \cdot \mathcal{C}^{{G}}
\subset \mathrm{int}(\mathcal{C}^{{G}})$ {for all
$\ell=1,\dots,\kappa$}. The Grassmannian induced map $\Phi^{{G}}$
restricted to $\mathsf{R}_\ell\times D\times \mathcal{C}^{{G}}$ is
given by
$$
\Phi^{{G}}(x,y,E)=(F(x),\phi_i(y),D\Phi_\ell \cdot E) \qquad
\text{for all \ $\ell=1,\dots,\kappa$.}
$$ By a change of coordinates we can write $\Phi^{{G}}$ restricted to
$\mathcal{U}^{{G}}=\mathcal{U}\times \mathcal{C}^{{G}}$ as the
skew-product
$$
    \Phi^{{G}}=F^{{G}} \ltimes (\phi_1,\dots,\phi_\kappa)
    \quad
    \text{on \  \
    ${\mathcal{U}}^{{G}}=(\mathsf{R}^{{G}}_1 \times D)\cup\dots\cup(\mathsf{R}^{{G}}_\kappa\times
    D$)}
%    \ where \ $\mathsf{R}^u_i=\mathsf{R}_i\times\mathcal{C}^u$}% for $i=1,\dots,k$}
$$
%where $\mathsf{R}^u_i=\mathsf{R}_i\times\mathcal{C}^u$ for all $i=1,\dots,k$
where %and
$$
F^{{G}}=F \ltimes (D\Phi_1,\dots,D\Phi_\kappa) \quad \text{on \ \
$\mathsf{R}^{{G}}=\mathsf{R}^{{G}}_1\cup\dots\cup\mathsf{R}^{{G}}_\kappa=(\mathsf{R}_1\times\mathcal{C}^{{G}})
\cup \dots \cup (\mathsf{R}_\kappa\times\mathcal{C}^{{G}})$.}
$$
Moreover, $F^{{G}}$ has a horseshoe
${\mathsf{\Lambda}}^{{G}}=\mathsf{\Lambda} \times \{E^u\}$ with
stable index
$$
d^{{G}}_{ss}\eqdef\mathrm{ind}^s({\Lambda}^{{G}})=ss+\dim
G(u,m)=ss+u(m-u).
$$
 {Observe that shrinking $\mathcal{C}^G$ if necessary, the contraction of $D\Phi_\ell$ on $\mathcal{C}^G$ dominates the contraction
of $F$ (that is $\beta\nu<\nu$). Then, the width $\alpha>0$ of the
stable cone-field  on $\mathsf{R}\times \mathcal{C}^G$ of $F^G$ is
the same width $\alpha$ that we have for the stable cone-field  on
$\mathsf{R}$ of $F$.}
%Additionally, shrinking if necessary $\mathcal{C}^G$, since the contraction of $D\Phi_\ell$ for all $\ell=1,\dots,\kappa$ on $\mathcal{C}^G$ dominates the contraction
%of $F$ (that is $\beta\nu<\nu$), the width $\alpha>0$ of the stable cone-field  on $\mathsf{R}\times \mathcal{C}^G$ of $F^G$ is
%the same width $\alpha$ that we have for the stable cone-field  on
%$\mathsf{R}$ of $F$.}

Since $\beta\nu < \nu<\lambda$ then
$F^{{G}}|_{{\mathsf{\Lambda}}^{{G}}}$ dominates the fiber dynamics
given by $\phi_1,\dots,\phi_\kappa$. By Theorem~\ref{thmBKR}, we
have that $\Gamma^{{G}}=\Gamma \times \{E^u\}$ is a $cs$-blender
of  {central dimension} $c>0$  of $\Phi^{{G}}$.
 {Moreover, the family of
{$( {\alpha,}\nu,\delta)$}-horizontal, $d^{{G}}_{ss}$-dimensional
$C^1$-discs in ${\mathcal{B}}^{{G}}=\mathcal{B} \times
\mathcal{C}^{{G}}$ is a superposition region of the blender
$\Gamma^{{G}}$. Here, as in~\S\ref{sec:affine-blender},
$0<\delta<\lambda L /2$ whereas $L$ is the Lebesgue number of the
cover~\eqref{eq:cover}.}
%Moreover, applying
%Corollary~\ref{cor:stability-blender}, $\Gamma^u$ is $C^0$-stable.

\subsection{Folding manifold with respect to the affine blender}
\label{sec:folding-manifold} Next we introduce the notion of a
folding manifold.
%with respect to
%the superposition region $\mathcal{B}$ of the affine $cs$-blender
%$\Gamma$ and the neighborhood $\mathcal{C}^u$ of unstable bundle
%$E^u$ of $\Phi$ over $W^u_{loc}(\Gamma)$. That is, with respect
%to
%${\mathcal{B}}^u=\mathcal{B} \times \mathcal{C}^u$.
To do this,
%for $\alpha\in (0,1)$, denote by
%$\mathcal{C}^u_\alpha$ %and $\mathcal{C}^{ss}$
%the cone field
%\begin{align*}
%\mathcal{C}^{u}_\alpha(x)&=\{(u,v,w)\in
%T_x\mathbb{R}^m=\mathbb{R}^{ss}\oplus \mathbb{R}^u \oplus
%\mathbb{R}^c: \ \ \|u+w\| \leq  \alpha \|v \|\}
%%\\
%%\mathcal{C}^{ss}(x)&=\{(v,u)\in
%%T_x\mathbb{R}^m=\mathbb{R}^{ss}\oplus \mathbb{R}^{c+u} : \ \
%%\|u\|\leq \alpha \|v\|\}.
%\end{align*}
%Notice that  $\mathcal{C}^u_\alpha$ is an unstable cone field of
%$\Phi$ over $\Gamma$ and taking $\alpha$ small enough we can see
%it as the closure of the open neighborhood $\mathcal{C}^u$ of
%$E^u$ in $G(u,m)$ that we found above. In fact,
%$E^u=\{0^{ss}\}\times \mathbb{R}^u \times \{0^c\} \in
%\mathcal{C}^u_\alpha$ is the unstable bundle of $\Phi$ over
%$W^u_{loc}(\Gamma)=W^u(\Gamma)\cap \mathbb{C}$.
%
we will consider a submanifold $\mathcal{S}$ of $\mathbb{R}^m$ of
dimension $ss+c$.
%and $0<\delta<\lambda L/2$, where we recall that $L>0$ is the Lebesgue number of the open cover~\eqref{eq:cover}.
In what follows we  {identify canonically the tangent space
 $T_z\mathcal{S}$ of $\mathcal{S}$ at $x$ with a} subspace of
$\mathbb{R}^m$.
% obtained by linear transport to the origin from the tangent space of $\mathcal{S}$ at $z$.

\begin{figure}
 \footnotesize
  \begin{center}
   \begin{picture}(400,220)
\put(100,0){
\includegraphics[scale=1]{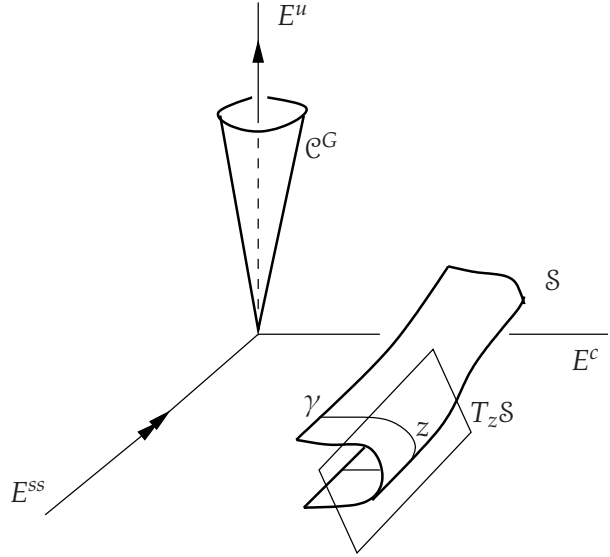}}
\put(200,150){\normalsize $\mathcal{C}^{{G}}$}
\put(290,100){\normalsize $\mathcal{S}$} \put(190,200){\normalsize
$E^{u}$} \put(90,20){\normalsize $E^{ss}$}
\put(300,70){\normalsize $E^{c}$} \put(242,45){\normalsize $z$}
% \put(246,81){\normalsize $T_z\mathcal{S}$}
 \put(262,50){\normalsize $T_z\mathcal{S}$}
 \put(200,55){\normalsize $\gamma$}
 \end{picture}
 \end{center}
 \caption{
 %Bifurcations of the intersection of two submanifolds in a three dimensiononal space.
 Folding manifold with respect to $\mathcal{B}^{{G}}=\mathcal{B}\times
 \mathcal{C}^{{G}}$.
 }\label{fig1}
 \end{figure}

\begin{defi} \label{def:dobra}
%{%Given $\nu, \delta>0$},
We say that $\mathcal{S}$ is a {$( {\alpha,}
\nu,\delta)$}-\emph{folding $C^r$-manifold} with respect to
$\mathcal{B}^{{G}}=\mathcal{B}\times \mathcal{C}^{{G}}$
%the unstable cone field $\mathcal{C}^{u}$ and the
%superposition region of discs $\mathscr{D}^{ss}$ of
%the $cs$-blender $\Gamma$
%(a folding manifold with respect to $\Gamma$ for short)
if  {there is $\epsilon>0$ such that}
\begin{enumerate}[leftmargin=0.45cm,itemsep=0.2cm]
\item {$\mathcal{S}$ is parameterized as a
 $(ss+c)$-dimensional $C^r$-embedding
$\mathcal{S}:  [-2,2]^{ss}\times  [-\epsilon,\epsilon]^c \to
   \overline{\mathcal{B}}$, of the form}
   $$   {\mathcal{S}(x,t)=(x,(t_1,\dots,t_u),h(x,t))\in \mathbb{R}^{ss}\times \mathbb{R}^{u}\times\mathbb{R}^c}$$
    {with $x\in [-2,2]^{ss}$ and $t=(t_1,\dots,t_u,\dots,t_c)\in
   \mathbb{R}^c$;}
\item there is $y\in B$ such that
$d(h(x,t),y)<\delta$ for all $(x,t)\in [-2,2]^{ss}\times
 [-\epsilon,\epsilon]^c$;
 %where
 %$h=\mathscr{P}\circ \mathcal{S}$ is the central coordinate
%of $\mathcal{S}$ and $\mathscr{P}$ is the standard projection on
%$\mathbb{R}^c$;
\item \label{foldingCu}{for all $x\in [-2,2]^{ss}$ and
$E\in  {\overline{\mathcal{C}^G}}$ there is a unique $t\in
[-\epsilon,\epsilon]^c$ such that $E$ is a subspace of $T_z
\mathcal{S}$ with $z= \mathcal{S}(x,t)$. Moreover, $t=t(x,E)$
varies $C^{r-1}$-continuously} with $(x,E)$ and
$$
   (\|Dh\|_{\infty} \cdot \max\{1,\|Dt\|_{\infty}\})\cdot \nu <\delta
   \quad  {\text{and} \quad \|Dt\|_\infty \leq \alpha}.
$$
%\label{foldingCu} for every continuous map $\gamma: [0,1]^c\to
%\mathcal{S}$ so that $\gamma(t)\in \mathcal{D}_t$ for all $t\in
%[0,1]^c$ it holds
%$$
%\qquad   \mathcal{C}^{u}_\alpha \subset \bigcup_{t\in [0,1]^c}
%   \hat{\mathcal{S}}_{\gamma(t)} \quad \text{being \ \
%   $\hat{\mathcal{S}}_x=\{E\in G(u,m):
%    \ \dim E\cap T_{x}\mathcal{S} = u \}$ \ for \ $x \in \mathcal{S}$}
%$$
%where we are seeing the cone field  $\mathcal{C}^u_\alpha$ as the
%closure of an open set $\mathcal{C}^u$ in $G(u,m)$, and
%\item $C\nu < \lambda L/2$ where $L>0$ is the Lebesgue number of the open cover~\eqref{eq:cover}
%and $C>0$ is the Lipschitz constant of the "central" coordinate
%$\mathcal{S}^c(x,t)$ of $\mathcal{S}(x,t)=\mathcal{D}_t(x)$, i.e.,
%$$
%   d(\mathcal{S}^c(x,t),\mathcal{S}^c(x',t'))\leq C
%   d((x,t),(x',t')) \quad \text{for all $x\in [0,1]^{ss}$ and $t\in [0,1]^c$}
%$$
%being $\mathcal{S}^c(x,t)=\mathscr{P}_c(\mathcal{S}(x,t))$ with
%$\mathscr{P}_c:\mathbb{R}^n\times\mathbb{R}^c \to \mathbb{R}^c$
%the standard projection on $\mathbb{R}^c$.
%% the central coordinates.
\end{enumerate}
\end{defi}
{Let us explain geometrically the above notion of a folding
manifold. First define the unstable cone for some small $\theta>0$
as
\begin{align*}
\mathcal{C}^{u}_\theta&=\{(u,v,w)\in
\mathbb{R}^m=\mathbb{R}^{ss}\oplus \mathbb{R}^u \oplus
\mathbb{R}^c: \ \ \|u+w\| <  \theta \|v \|\}\cup\{0\}.
\end{align*}
Each vector subspace $E$ of dimension $u$ contained in the cone
$\mathcal{C}^{u}_\theta$ can be identified with an element of
$\mathcal{C}^{G}$ and vice-versa. Then condition~\eqref{foldingCu}
implies that for every $x\in [-2,2]^{ss}$,
\begin{equation}
\label{eq:cover-Cu}
   \overline{\mathcal{C}^{u}_\theta} \subset \bigcup_{t\in [-\epsilon,\epsilon]^c}
   T_{\gamma(t)}\mathcal{S} \quad \text{where $\gamma(t)=\mathcal{S}(x,t)$.}
\end{equation}
%Therefore the parameters $\epsilon$ and $\alpha$ can be interpreted as the size of $\mathcal{C}^G$ around$E^u=\{0^{ss}\}\times \mathbb{R}^u \times \{0^c\}$.
In fact, the uniqueness in condition~\eqref{foldingCu} implies the
injectivity of the map $t\mapsto T_{\gamma(t)}\mathcal{S}$ and
thus, the parameters $\epsilon$ and $\theta$ can be interpreted as
the size of the neighborhood $\mathcal{C}^G$ of
$E^u=\{0^{ss}\}\times \mathbb{R}^u \times \{0^c\}$ in $G(u,m)$. }

%\textcolor{green}{In fact, condition~\eqref{foldingCu} asks that
%the for a fixed $x\in [-2,2]^{ss}$ we have a function $\tau:
%\mathcal{C}^G \to [-\epsilon,\epsilon]^c$ defined by the relation
%$E\leq T_{z}\mathcal{S}$ where $z=\mathcal{S}(x,\tau(E))$. We can
%also define a function $T:[-\epsilon,\epsilon]^c \to G(ss+c,m)$
%given by $t\mapsto T_{\gamma(t)}\mathcal{S}$ where
%$\gamma(t)=\mathcal{S}(x,t)$. The uniqueness in condition~\eqref{foldingCu} implies that $T$ restricts to
%$\tau(\mathcal{C}^G)$ is an injective function. Indeed, if $T(t)=T(t')$ with $t$ and $t'$ in $\tau(\mathcal{C}^G)$, then
%there are $E$ and $E'$ in $\mathcal{C}^G$ such that $E, E' \leq
%T_{\gamma(t)}\mathcal{S}=T_{\gamma(t')}\mathcal{S}$ where
%$t=\tau(E)$, $t'=\tau(E')$. From the uniqueness, we have that
%necessarily $t=t'$. In this way, we can identify 1-to-1 the image
%of $\tau$ in $[-\epsilon,\epsilon]^c$ with $T\circ
%\tau(\mathcal{C}^u)$. From this identification, now we can
%interprete the parameter $\epsilon$ as the size of $\mathcal{C}^u$.}

%Also reducing $\alpha>0$ if necessary, we cane
%assume that the injective of the cover also holds. Hence, being a
%folding $C^r$-manifold is an open property.

{Next we will show an example of a $(
{\alpha,}\nu,\delta)$-folding manifold with respect to
$\mathcal{B}^G=\mathcal{B}\times \mathcal{C}^G$. Recall that $B$
is an open set of $\mathbb{R}^c$ containing the origin.
 {Up to a conjugacy with a translation}, we can
assume that $\mathcal{B}=\mathsf{R}\times B$ contains
$(-2,2)^{ss}\times \{0^u\} \times \{0^c\}$. }

\begin{exap} \label{exap:dobra} {
Consider the $(ss+c)$-dimensional embedding given by
$$
  \mathcal{S}:[-2,2]^{ss}\times [-\epsilon,\epsilon]^c \to
  \mathbb{R}^m, \ \ \mathcal{S}(x,t)=(x,(t_1,\dots,t_u),
  ( {H}(t), {t_{u+1},\dots,t_c}))
  \in \mathbb{R}^{ss}\times\mathbb{R}^u\times \mathbb{R}^c
$$
where $x\in \mathbb{R}^{ss}$, $t=(t_1,\dots,t_u,t_{u+1},\dots,t_c)
\in [-\epsilon,\epsilon]^c$ and $ {H}(t)=( {H}_1(t),\dots,
{H_u}(t))$ with
\begin{align*}
    {H}_i(t)=  \sum_{j=0}^{u-1} t_{j+1} t_{ju+i}  \quad \text{for $i=1,\dots,u$.}
   %\quad \text{and} \quad    h_i(t)=t_i \quad \text{for
   %$i=u+1,\dots,c$.}
\end{align*}
For a fixed $\delta>0$, we will prove that $\mathcal{S}$ is a $(
{\alpha,}\nu,\delta)$-folding  {$C^\infty$}-manifold
 {for any $\alpha>0$ large enough} and
$\epsilon,\nu>0$ small enough. To do this, we will show that
$\mathcal{S}$ satisfies all the conditions of
Definition~\ref{def:dobra}.}

{It is straightforward that $\mathcal{S}$ is a
$(ss+c)$-dimensional  {$C^\infty$}-embedding. Since $\mathcal{B}$
contains $(-2,2)^{ss}\times \{0^u\} \times \{0^c\}$, then
$\mathcal{S}([-2,2]^{ss}\times [-\epsilon,\epsilon]^c)\subset
\mathcal{B}$ for any $\epsilon>0$ small enough, concluding the
first condition in Definition~\ref{def:dobra}. Observe now that
the central coordinate of $\mathcal{S}$, i.e., the map
$h(t)=\mathscr{P} \circ \mathcal{S}(x,t)$ does not depend on $x$.
Moreover, if $\epsilon>0$ is small enough then
$d(h(t),0^c)<\delta$ for all $(x,t)\in [-2,2]^{ss}\times
[-\epsilon,\epsilon]^c$, as is required by the second condition in
Definition~\ref{def:dobra}.} {To conclude that $\mathcal{S}$ is a
folding manifold, it only remains to prove the last condition in
Definition~\ref{def:dobra}, which is somewhat longer and will be
done in the next paragraphs.

By a direct computation, $T_z\mathcal{S}$ at $z=\mathcal{S}(x,t)$
is given by
$$
  T_z\mathcal{S}(x',t')=(x',(t'_1,\dots,t'_u),(g(t',t), {t'_{u+1},\dots,t'_{c}}))\in
  \mathbb{R}^{ss}\times \mathbb{R}^u\times \mathbb{R}^c
$$
where $x'\in \mathbb{R}^{ss}$,
$t'=(t'_1,\dots,t'_u,t'_{u+1},\dots,t'_c)\in \mathbb{R}^c$ and
$g(t',t)=(g_1(t',t),\dots,g_{ {u}}(t',t))$ with
\begin{align*}
   g^{}_i(t',t)=  \sum_{j=0}^{u-1}  t'_{j+1} t^{}_{ju+i} + t^{}_{j+1}
   t'_{ju+i}  \quad \text{for $i=1,\dots,u$.}
   %\quad \text{and} \quad g_i(t',t)=t'_i \quad \text{for
   %$i=u+1,\dots,c$.}
\end{align*}
We want to prove that for any $x\in [-2,2]$ and $E\in\overline{
\mathcal{C}^G}$, there is a unique $t\in [-\epsilon,\epsilon]^c$
such that $E$ is a subspace of $T_z\mathcal{S}$ for
$z=\mathcal{S}(x,t)$. Observe that if $E=\langle v_k:
k=1,\dots,u\rangle$ is generated by linearly independent vectors
$v_k$, then $E$ is a subspace of $T_z\mathcal{S}$ if and only if
$v_k\in T_z\mathcal{S}$ for all $k=1,\dots,u$. Denoting
$v_k=(a_k,b_k,c_k)\in \mathbb{R}^{ss}\times \mathbb{R}^u\times
\mathbb{R}^c$, the above condition is equivalent to the existence
of $t\in [-\epsilon,\epsilon]^c$ such that for every
$k=1,\dots,u$,  {there are $x_k'\in\mathbb{R}^{ss}$, and
$t'_k\in\mathbb{R}^c$ satisfying:}
$$
    x'_k=a^{}_k \ \ \ \ t'_k=
    (b^{}_{k1},\dots,b^{}_{ku},c^{}_{k\,u+1},\dots,c^{}_{kc}) \ \ \ \text{and}
    \ \  c^{}_{ki}= g^{}_i(t'_k,t)  \ \ \ \text{for $i=1,\dots,u$}.
$$
Notice that $g^{}_i(t'_k,t)$ can be written as a scalar product of
$t$ by a vector  {$\vec{a}_{ki}\in \mathbb{R}^c$} that depends
on~$t'_k$. Thus,  {having into account that $c=u^2$,} we can write
the relation $c_{ki}=g_i(t'_k,t)$ for $k=1,\dots,u$ and
$i=1,\dots,u$ as a matrix product
$$
At=\vec{c} \ \ \text{where} \ \vec{c}=(c_{11},\dots,c_{1
{u}},\dots,c_{u1},\dots,c_{u {u}})^T
$$
and $A
{=[\vec{a}_{11};\dots;\vec{a}_{1u};\dots;\vec{a}_{u1};\dots;\vec{a}_{uu}]}$
is a $c$-by-$c$ matrix that depends on $t'_k$ for $k=1,\dots,u$.
In fact, since $t'_k$ form part of the coordinates of the vector
$v_k$, then $A=A(E)$ depends on the vector space $E$. Similarly
this holds for $\vec{c}=\vec{c}(E)$. Hence, to find the required
$t\in [-\epsilon,\epsilon]^c$ we only need to show that the linear
system $A(E)\cdot t = \vec{c}(E)$ is uniquely solved.}

{ To do this, we will analyze the determinant of $A$ at
$E^u=\{0^{ss}\}\times\mathbb{R}^u\times\{0^c\}$ in which the open
set $\mathcal{C}^G$ is centered. Observe that $E^u$ is generated
by the vectors $e^u_k=(0^{ss},e_k,0^c)$ for $k=1,\dots,u$, where
$e_k$ is the $k$-th canonical vector in $\mathbb{R}^u$.  Thus,
$t'_k=(e_k,0^{c-u})$ and then
$$
g^{}_i(t'_k,t)=t_{(k-1)u+i}+\delta_{ki} \, t_1 \ \ \text{for} \ \
k=1,\dots,u \ \ \text{and} \ \ i=1,\dots,u,
$$ where $\delta_{ki}$ is the Kronecker delta. In view
of this, $A(E^u)=\mathrm{Id}+\mathrm{L}$ where $\mathrm{Id}$ is
the identity matrix and $\mathrm{L}$ is a matrix whose first
column is given by $(e_1,e_2,\dots,e_u)^T$ and the rest of the
elements are zero. Hence $A(E^u)$ is a triangular matrix with
$\det A(E^u) =2 \not =0$. Thus, we get that $A(E)\cdot t =
\vec{c}(E)$ is uniquely solved for any $E$ close enough to $E^u$.}

{ This shows the first part of the last condition in
Definition~\ref{def:dobra} but still we need to prove that $C \nu
<\delta$ where $C=\|Dh\|_\infty \cdot \max\{1,\|Dt\|_\infty\}$
 {and $\|Dt\|_\infty\leq \alpha$}. Since both
$h=h(t)$ and $t=t(E)$ are functions of class $C^{ {\infty}}$, then
$C<\infty$ over $[-2,2]^{ss}\times \mathcal{C}^{G}$ and
$\|Dt\|_\infty<\infty$. Thus, this condition trivially holds by
taking $\nu>0$ small enough  {and $\alpha$ large enough.}}
%\qed
\end{exap}

\begin{rem}
{ In Proposition~\ref{prop1:appendix} in Appendix~\ref{appendix}
we show that actually $\mathcal{S}$ in the above example is a $(
{\alpha,}\nu,\delta)$-folding $ {C^\infty}$-manifold for any $\nu<
\delta$  {and $\alpha>1$}.}
\end{rem}

\begin{rem} \label{rem:rob-dobra}
{Fixing a small enough $\epsilon>0$, for which $\mathcal{S}$ is a
$( {\alpha,}\nu,\delta)$-folding manifold, the above example is
$C^2$-robust in the following sense. Let $\tilde{\mathcal{S}}:
[-2,2]^{ss}\times  [-\epsilon,\epsilon]^c \to
   \overline{\mathcal{B}}$
be $(ss+c)$-dimensional $C^2$-embedding, which is $C^2$-
sufficiently close to $\mathcal{S}$. Then
 $\tilde{\mathcal{S}}$ is also a $( {\alpha,}\nu,\delta)$-folding manifold.
%i.e., any $C^r$-perturbation \tilde{\mathcal{S}}$ of $\mathcal{S}$ with $r\geq 2$ is still a $(\textcolor{blue}{\alpha,}\nu,\delta)$-folding $C^r$-manifold with respect to $\mathcal{B}^G$.
Let us comment on why this is true. What has to be shown is
basically condition~\eqref{foldingCu} of
Definition~\ref{def:dobra}. The function $\tilde{\mathcal{S}}$ can
be written in the form
$$
(x,t)\mapsto (x,(t_1,\dots,t_u),(H(x,t),t_{u+1},\dots,t_c)), \
\text{where} \ H(x,t)=H(t)+\kappa(x,t).
$$
Here $H(t)$ comes from the embedding $\mathcal{S}$, and
$\kappa(x,t)$ is a small $C^2$-perturbation. Following the
notation of the previous example, the non-linear equations that
now have to be solved take the form
$$
x'_k=a_k, \ \ t'_k=(b_{k1},\dots,b_{ku},c_{k\, u+1},\dots,c_{kc}),
\ \text{and} \ c_{ki}=g_i(t'_k,t)+\varkappa_{i} \quad \text{for
$i,k=1,\dots,u$}
$$
where $\varkappa_{i}$ are functions which depend on $(x,t,x',t')$
with small $C^1$-derivative.
% As above, these equations can be written in the form
%$\vec{c}=A \cdot t + \vec{\varkappa}$ where
%$\vec{\varkappa}$ has small $C^1$-derivatives. Notice that this systems is
%obteined from the variables $(x,E,\kappa)$. The previous example
%shows that for each $x\in [-2,2]^{ss}$ we have a solution $t$ of the equation
%for the point $(x,E^u,0)$. Hence, the implicit function theorem implies
% a solution $t$ as function of $(x,E,\kappa)$ in a neighborhood of that point.
The solution of these equations for $t$ given $x$ and $E$, is then
guaranteed by an application of the Implicit Function Theorem. }
\end{rem}

Let $\mathcal{S}$ be a $( {\alpha,}\nu,\delta)$-folding
$C^r$-manifold with respect to
$\mathcal{B}^{{G}}=\mathcal{B}\times\mathcal{C}^{{G}}$.
%{From now on we will use the notation $E\leq F$ to indicate that
%$E$ is a subspace of the linear vector space $F$.}
Consider
$$
    \mathcal{S}^{{G}}  = \{ (z,E): z\in \mathcal{S}, \  E \in G(u,m)
    \ \
    \text{and} \  \ {E  {\subset} T_z\mathcal{S}}  \} \subset
    G_u(\mathbb{R}^m).
$$
One can see $\mathcal{S}^{{G}}$ as a fiber bundle over
$\mathcal{S}$ with fibers
$$
%    \hat{S}=\bigsqcup_{p\in \mathcal{S}} \hat{\mathcal{S}}_p \quad
%    \text{where} \ \
    (\mathcal{S}^{{G}})_z=\{E\in G(u,m): E  {\subset}  T_z\mathcal{S} \}.
$$
Notice that $(\mathcal{S}^{{G}})_z$ is a compact manifold of
dimension $ \dim G(u,ss+c)= u(ss+c-u)$. Then, the dimension of
$\mathcal{S}^{{G}}$ is $ss+c + u(ss+c-u) =  {ss+}
 u(m-u)+c-u^2$. In fact, since $c=u^2$ we have that this dimension
coincides with $d^{{G}}_{ss}=ss+u(m-u)$.

\begin{lem} \label{lem-induced-folding-manifold}
The set $\mathcal{H}^{{G}}=\mathcal{S}^{{G}} \cap
\overline{\mathcal{B}^{{G}}}$ is a {($
{\alpha,}\nu,\delta)$}-horizontal $d^{{G}}_{ss}$-dimensional
$C^{{r-1}}$-disc in $\mathcal{B}^{{G}}$.
\end{lem}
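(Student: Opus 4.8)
The plan is to read condition~\eqref{foldingCu} of Definition~\ref{def:dobra} as a ready-made graph parameterization of $\mathcal{H}^G$ over the strong-stable coordinates of $\mathcal{B}^G$. Recall that in the Grassmannian picture the strong-stable directions of $\mathsf{R}^G=\mathsf{R}\times\mathcal{C}^G$ are the $\mathbb{R}^{ss}$-coordinate $x$ together with the plane $E\in\mathcal{C}^G$, the unstable direction is $\mathbb{R}^u$, and the contracting fibre is $B\subset\mathbb{R}^c$; thus $\mathcal{B}^G$ has exactly the product shape required by Definition~\ref{def:almost-horizontal-disc} with $d_{ss}$ replaced by $d^G_{ss}=ss+u(m-u)$. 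After fixing the affine chart of $G(u,m)$ near $E^u$ (under which $\mathcal{C}^G$ becomes a Euclidean neighbourhood and $E$ a Euclidean coordinate), I would define the parameterizing embedding
$$\mathcal{H}^G(x,E)=\bigl(\mathcal{S}(x,t(x,E)),\,E\bigr),\qquad (x,E)\in[-2,2]^{ss}\times\overline{\mathcal{C}^G},$$
where $t=t(x,E)\in[-\epsilon,\epsilon]^c$ is the unique parameter furnished by \eqref{foldingCu}. Writing $\mathcal{S}(x,t)=(x,(t_1,\dots,t_u),h(x,t))$ and reordering so the strong-stable block $\xi=(x,E)$ comes first, this has precisely the form $\xi\mapsto(\xi,g^G(\xi),h^G(\xi))$, with unstable part $g^G(\xi)=(t_1,\dots,t_u)(x,E)$ and central part $h^G(\xi)=h(x,t(x,E))$.

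Next I would check that $\mathcal{H}^G$ really parameterizes the full set $\mathcal{S}^G\cap\overline{\mathcal{B}^G}$. Since $\overline{\mathcal{B}^G}=\overline{\mathcal{B}}\times\overline{\mathcal{C}^G}$ and $\mathcal{S}\subset\overline{\mathcal{B}}$ by condition (i) of Definition~\ref{def:dobra}, the only constraint added by intersecting with $\overline{\mathcal{B}^G}$ is $E\in\overline{\mathcal{C}^G}$; so for a point $(z,E)$ in the intersection with $z=\mathcal{S}(x,t)$, the first $ss$ coordinates of $z$ recover $x$, and the uniqueness in \eqref{foldingCu} forces $t=t(x,E)$, giving $(z,E)=\mathcal{H}^G(x,E)$ and hence a bijection. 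Regularity and the embedding property are then routine: $t(x,E)$ is $C^{r-1}$ by \eqref{foldingCu} and $h$ is $C^r$, so $\mathcal{H}^G$ is $C^{r-1}$; it is injective because $(x,E)$ is read off from the image; and it is an immersion because the projection of $D\mathcal{H}^G$ onto the $(\mathbb{R}^{ss},G(u,m))$-block is the identity. Thus $\mathcal{H}^G$ is a proper $C^{r-1}$-embedding of a cube of dimension $ss+u(m-u)=d^G_{ss}$, matching the dimension computed just before the lemma.

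It then remains to verify the three numerical clauses of Definition~\ref{def:almost-horizontal-disc}, and these transcribe the clauses of \eqref{foldingCu} almost verbatim. For \eqref{disc1}, $g^G$ is the first $u$ components of $t(x,E)$, so $\|Dg^G\|_\infty\le\|Dt\|_\infty\le\alpha$. For \eqref{disc2}, the point $y\in B$ from condition (ii) of Definition~\ref{def:dobra} works, since $t(x,E)\in[-\epsilon,\epsilon]^c$ gives $d(h^G(x,E),y)=d(h(x,t(x,E)),y)<\delta$. For \eqref{disc3}, I would write $h^G=h\circ\Xi$ with $\Xi(x,E)=(x,t(x,E))$, whose derivative is the block matrix $\bigl(\begin{smallmatrix}\mathrm{Id}&0\\ \partial_x t&\partial_E t\end{smallmatrix}\bigr)$; measured in the $\ell^\infty$-operator (maximal row-sum) norm this has $\|D\Xi\|_\infty=\max\{1,\|Dt\|_\infty\}$, so by submultiplicativity $C\eqdef\|Dh\|_\infty\cdot\max\{1,\|Dt\|_\infty\}$ is a Lipschitz constant for $h^G$, and the first inequality in \eqref{foldingCu} gives $C\nu<\delta$.

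The genuinely substantive step is the bijection in the second paragraph: identifying the fibre-bundle set $\mathcal{S}^G\cap\overline{\mathcal{B}^G}$ with the graph of $(x,E)\mapsto\bigl(t(x,E),h(x,t(x,E))\bigr)$. This is exactly where the defining \emph{folding} property \eqref{foldingCu}—that the tangent planes $T_{\mathcal{S}(x,t)}\mathcal{S}$ sweep out each $E\in\overline{\mathcal{C}^G}$ once and only once, cf.~\eqref{eq:cover-Cu}—does the real work, converting a fibre-bundle description into a graph over the strong-stable coordinates. The only mild technical care is the Grassmannian chart together with the choice of the $\ell^\infty$-operator norm, so that the composite Lipschitz constant comes out precisely as the quantity $\|Dh\|_\infty\cdot\max\{1,\|Dt\|_\infty\}$ controlled in \eqref{foldingCu}; everything else is bookkeeping.
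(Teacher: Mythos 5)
Your proposal is correct and follows essentially the same route as the paper: both parameterize $\mathcal{H}^{G}$ as the graph $(x,E)\mapsto(\mathcal{S}(x,t(x,E)),E)$ using the uniqueness clause of Definition~\ref{def:dobra}, and then transcribe the folding-manifold estimates into the three clauses of Definition~\ref{def:almost-horizontal-disc} via the chain-rule bound $\|Dh^{G}\|_\infty\le\|Dh\|_\infty\max\{1,\|Dt\|_\infty\}$. Your extra care in checking that the graph exhausts $\mathcal{S}^{G}\cap\overline{\mathcal{B}^{G}}$ and in fixing the Grassmannian chart is a welcome tightening of a step the paper leaves implicit, but it does not change the argument.
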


\begin{proof} First notice that $\mathcal{H}^{{G}}$ is
a $d^{{G}}_{ss}$-dimensional $C^{{r-1}}$-disc in
$\mathcal{B}^{{G}}$. This follows from Definition~\ref{def:dobra},
since given any $x\in [-2,2]^{ss}$ and any $E\in
\overline{\mathcal{C}^{{G}}}$ we have a unique $t=t(x,E)\in
[{-\epsilon,\epsilon}]^c$ which varies $C^r$-continuously with $x$
and $E$ such that
% \text{by linear transport to the origem} \ \
$E\subset  T_{z}\mathcal{S}$ with $z=\mathcal{S}(x,t)$. {Thus we
can parametrize $\mathcal{H}^G$ as}
$$
  \mathcal{H}^{{G}}: [-2,2]^{ss}\times \overline{\mathcal{C}^{{G}}} \longrightarrow
  G_u(\mathbb{R}^m), \qquad \mathcal{H}^{{G}}(x,E)=(\mathcal{S}(x,t), E)
  \in \overline{\mathcal{B}}\times \overline{\mathcal{C}^{{G}}}=
  \overline{\mathcal{B}^{{G}}}
$$
is a $C^{{r-1}}$-disc in $\mathcal{B}^{{G}}$. On the other hand,
the  {unstable and} central  {coordinates} of this disc
 {are} given by
\begin{align*}
  {g^{{G}}(x,E)}& {=\mathscr{P}_u \circ
 \mathcal{H}^{{{G}}}(x,E)=g(t(x,E)), \ \ \text{and}} \\
  h^{{G}}(x,E)&=\mathscr{P}_{ {c}} \circ
 \mathcal{H}^{{{G}}}(x,E)=h(x,t(x,E))  \qquad \text{for} \ \
 (x,E)\in [-2,2]^{ss}\times\overline{\mathcal{C}^{{G}}},
\end{align*}
%for $(x,E)\in [-2,2]^{ss}\times\overline{\mathcal{C}^{{G}}}$}
where
$$
 {(t_1,\dots,t_u)=g(t)=\mathscr{P}_u\circ
\mathcal{S}(x,t) \quad \text{and}} \quad h(x,t)=\mathscr{P}_{
{c}}\circ \mathcal{S}(x,t)
$$
 {are} the  {unstable and} central
 {coordinates} of the folding manifold
$\mathcal{S}$. Here $\mathscr{P}_u$ and $\mathscr{P}_c$ denote
%in both cases,
the {canonical} projections on {$\mathbb{R}^u$ and} $\mathbb{R}^c$
 {respectively}. Hence, again, by the definition of
folding manifold we have $y\in B$ such that
$d(h^{{G}}(x,E),y)=d(h(x,t(x,E)),y)<\delta$,
$$
  \|Dh^{{G}}\|_{\infty} \leq \|Dh\|_{\infty}
  \max \{1, \|Dt\|_{\infty}\}=C \quad \text{with \ \ $C\nu < \delta$.}
$$
and  {$$
  \|Dg^G\|_\infty \leq \|Dg\|_\infty \|Dt\|_\infty \leq
  \|Dt\|_\infty \leq \alpha.
$$}
 This proves that $\mathcal{H}^{{G}}$ is
$( {\alpha,}\nu,\delta)$-horizontal disc concludes the proof.
\end{proof}

\begin{rem}
The {$( {\alpha,}\nu,\delta)$}-horizontal $C^{{r-1}}$-disc
$\mathcal{H}^{{G}}$ obtained from the $C^r$-folding manifold in
Example~\ref{exap:dobra} is $C^0$-close to a horizontal disc but
$C^1$-far from it.
\end{rem}

\subsection{Robust tangencies with a folding manifold}
\label{sec:robust-tangency-folding-manifold} {Recall that the
$C^r$-diffeomorphism $\Phi$ of $\mathbb{R}^m$ we are considering
in this section was introduced
in~\S\ref{sec:affine-blenders-grasmannian}. This map has a
$cs$-blender $\Gamma$ of  {central dimension} $c>0$, where a
superposition region contains the family of $(
{\alpha,}\nu,\delta)$-horizonal discs in $\mathcal{B}$ with
$\delta<\lambda L/2$.} Now, we will prove the following key
result:
\begin{prop}
\label{prop-tangency-folding} There is a {$C^2$-neighborhood}
$\mathscr{U}$ of $\Phi$ such that {for any $(
{\alpha,}\nu,\delta)$-folding $C^r$-manifold $\mathcal{S}$ with
respect to $\mathcal{B}^G=\mathcal{B}\times \mathcal{C}^G$ it
holds that}
% $W^{u}_{loc}(\Gamma_g)\cap
%\mathcal{S}$ has a tangency of codimension $u>0$
 {for any} $g\in \mathscr{U}$
%. That is,
there are points $z\in\Gamma_g$ and $x\in W^u_{loc}(z) \cap
\mathcal{S}$ such that
\begin{equation}
\label{eq:tangencia}
  \dim T_x W^u_{loc}(z) \cap T_x \mathcal{S} = u
  \quad \text{or equivalently, \ \ $T_xW^u_{loc}(z) \subset T_x
  \mathcal{S}$}.
\end{equation}
 {In particular, since the codimension of
$W^u_{loc}(z)$ coincides with the dimension of $\mathcal{S}$,
these two manifolds intersect at $x$ in a tangency of codimension
$u$.}
\end{prop}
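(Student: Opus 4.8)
The plan is to lift the problem to the Grassmannian and invoke the blender property established earlier. The key structural observation, proved in Lemma~\ref{lem-induced-folding-manifold}, is that the folding manifold $\mathcal{S}$ induces an $(\alpha,\nu,\delta)$-horizontal $d^{G}_{ss}$-dimensional $C^{r-1}$-disc $\mathcal{H}^{G}=\mathcal{S}^{G}\cap\overline{\mathcal{B}^{G}}$ lying in the superposition domain $\mathcal{B}^{G}=\mathcal{B}\times\mathcal{C}^{G}$. Since $\Gamma^{G}=\Gamma\times\{E^u\}$ is a $cs$-blender of central dimension $c$ for $\Phi^{G}$ (by the discussion of \S\ref{sec:affine-blenders-grasmannian}), the defining property of a blender guarantees that the local unstable manifold of the blender robustly meets every disc in the superposition region. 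Concretely, I would fix the $C^{1}$-neighborhood $\mathscr{U}^{G}$ of $\Phi^{G}$ coming from Definition~\ref{def:blender} and let $\mathscr{U}$ be a $C^{2}$-neighborhood of $\Phi$ small enough that $g\mapsto g^{G}$ sends $\mathscr{U}$ into $\mathscr{U}^{G}$ (this is where we spend one derivative, so $C^{2}$ on $g$ gives $C^{1}$ on $g^{G}$, matching the blender's $C^{1}$-robustness).

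Next I would apply the blender intersection property: for every $g\in\mathscr{U}$ and every disc in $\mathscr{D}$, in particular for $\mathcal{H}^{G}$, we have $W^{u}_{loc}(\Gamma^{G}_{g})\cap\mathcal{H}^{G}\neq\emptyset$. By the hyperbolicity of $\Gamma^{G}$ this intersection point lies in $W^{u}_{loc}(z^{G})$ for some $z^{G}\in\Gamma^{G}_{g}$; writing $z^{G}=(z,E^u_g)$ and the intersection point as $(x,E)$, we obtain a point $z\in\Gamma_g$ and a $u$-plane $E$ such that $(x,E)\in W^{u}_{loc}(z^{G})\cap\mathcal{S}^{G}$. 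The crucial step is now to descend this Grassmannian intersection back to $\mathbb{R}^m$ and read off its geometric meaning. Since $\Phi^{G}(x,E)=(\Phi(x),D\Phi(x)E)$ is precisely the tangent-bundle action, the orbit of $(x,E)$ staying in $\mathcal{U}^{G}$ forces $E=T_x W^{u}_{loc}(z)$: the unstable fiber of the induced map over a point of $W^{u}_{loc}(z)$ is exactly the tangent space to $W^{u}_{loc}(z)$ at $x$. On the other hand, membership $(x,E)\in\mathcal{S}^{G}$ means by definition that $E\subset T_x\mathcal{S}$.

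Combining these two facts yields $T_xW^{u}_{loc}(z)=E\subset T_x\mathcal{S}$, which is exactly the conclusion $\dim\left(T_xW^{u}_{loc}(z)\cap T_x\mathcal{S}\right)=u$ of~\eqref{eq:tangencia}, since $\dim T_xW^{u}_{loc}(z)=d_u=u$. To derive the final codimension-$u$ statement, I would note that the codimension of $W^{u}_{loc}(z)$ equals $m-u$, which coincides with $\dim\mathcal{S}=ss+c=ss+u^2$; indeed $m-u=ss+u+c-u=ss+u^2$. By the dimension-equals-codimension remark in \S1.1, when the codimension of one submanifold matches the dimension of the other we have $c_x=d_x$, and here the complementary computation gives $c_x(W^u_{loc}(z),\mathcal{S})=u$. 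Thus the intersection at $x$ is a tangency of codimension exactly $u$.

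The main obstacle I anticipate is the identification $E=T_xW^{u}_{loc}(z)$, that is, verifying that the unstable fiber of the blender $\Gamma^{G}$ over $x$ truly records the tangent plane to the genuine unstable manifold $W^{u}_{loc}(z)$ of $\Gamma$ and not some spurious $u$-plane. This requires checking that the induced unstable bundle $E^u=\{0^{ss}\}\times\mathbb{R}^u\times\{0^c\}$, which is the attracting fixed direction of the $D\Phi_\ell$-action and around which $\mathcal{C}^{G}$ is centered, is the graph-transform limit of the tangent planes along unstable orbits; this is a standard consequence of the domination $\beta\nu<\nu<\lambda$ and the invariance of $\mathcal{C}^{G}$ under the $D\Phi_\ell$, but it must be stated carefully so that the fibered point produced by the blender genuinely projects to a tangency downstairs. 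The robustness in $g$ is then automatic from the blender's built-in $C^{1}$-robustness, so no separate perturbation argument is needed.
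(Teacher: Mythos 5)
Your proposal is correct and follows essentially the same route as the paper's proof: lift to the Grassmannian bundle, apply the blender property of $\Gamma^{G}$ to the induced horizontal disc $\mathcal{H}^{G}$ from Lemma~\ref{lem-induced-folding-manifold}, and identify the Grassmannian coordinate $E$ of the intersection point with $T_xW^u_{loc}(z)$ via the invariance of $\mathcal{C}^{G}$ (the paper phrases this as $W^u_{loc}(\Gamma^{G}_g)$ being laminated by $u$-dimensional plaques projecting one-to-one onto $W^u_{loc}(\Gamma_g)$). The "main obstacle" you flag is precisely the step the paper handles the same way, so no gap remains.
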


\begin{proof} We recall that $\Gamma^{{G}}=\Gamma\times\{E^u\}$
is a $cs$-blender of  {central dimension} $c>0$ for the induced
$C^{1}$-diffeomorphism $\Phi^{{G}}$, whose superposition region
contains the set $\mathscr{D}$ of $(
{\alpha,}\nu,\delta)$-horizontal $d^{{G}}_{ss}$-dimensional
$C^1$-discs in $\mathcal{B}^{{G}}=\mathcal{B} \times
\mathcal{C}^{{G}}$.
%Moreover, the blender $\Gamma^u$ is $C^0$-stable.
Hence, {by definition of a blender}, there is a $C^1$-neighborhood
$\mathscr{U}^{{G}}$ of $\Phi^{{G}}$
%and a $C^0$-open
%set $\mathscr{D}_0$ of $d^{{G}}_{ss}$-dimensional discs  containing
%the set $\mathscr{D}_1$
where for {each map $\Psi \in \mathscr{U}^G$} we have an
 {intersection} between  {{each} disc
in $\mathscr{D}$ and the local} unstable manifold of
% {a point in}
the continuation of $\Gamma^{{G}}$ for $\Psi$. We take a
{$C^2$-neighborhood} $\mathscr{U}$ of $\Phi$ so that for every
$g\in\mathscr{U}$ its induced {$C^1$-diffeomorphism} $g^{{G}}$ on
$G_u(\mathbb{R}^m)$ belongs to $\mathscr{U}^{{G}}$. Hence, {the
continuation $\Gamma^{G}_g$ of $\Gamma^G$ for $g^{{G}}$ is a
$cs$-blender}. Moreover, $W^u_{loc}(\Gamma_g^{{G}})$ is {
{laminated by}  plaques of dimension $u$ which project one-to-one
onto $W^u_{loc}(\Gamma_g)$}. In particular,
 {shrinking $\mathscr{U}$ if necessary},
\begin{equation}\label{eq:implication}
\text{if $(x,E)\in W^u_{loc}(\Gamma_g^{{G}})$ then there is $z\in
\Gamma_g$ such that $x\in W^u_{loc}(z)$ and $E=T_xW^u_{loc}(z)\in
\mathcal{C}^{{G}}$.}
\end{equation}
On the hand, {if $\mathcal{S}$ is a $(
{\alpha,}\nu,\delta)$-folding manifold with respect to
$\mathcal{B}^G$}, then by
Lemma~\ref{lem-induced-folding-manifold}, the manifold
$\mathcal{S}^{{G}}$ contains a ${(
{\alpha,}\nu,\delta)}$-horizontal $d^{{G}}_{ss}$-dimensional
$C^1$-disc $\mathcal{H}^{{G}}$ in $\mathcal{B}^{{G}}$. Hence
$\mathcal{H}^{{G}} \in \mathscr{D}$. Thus,
$W^u_{loc}(\Gamma_g^{{G}})\cap \mathcal{H}^{{G}} \not =
\emptyset$. Consequently, there is $(x,E)$ belonging  to
$\mathcal{H}^{{G}}\subset \mathcal{S}^{{G}}$ and
$W^u_{loc}(\Gamma_g^{{G}})$. In particular,
from~\eqref{eq:implication}, we get that  $x\in \mathcal{S}\cap
W^u_{loc}(\Gamma_g)$ and $T_xW^u_{loc}(z)=E\subset T_x
\mathcal{S}$ for some $z\in \Gamma_g$. This completes the proof.
\end{proof}

\subsection{Proof of Theorem~\ref{thmD}}
\label{sec:prove-ThmD} Finally we prove Theorem~\ref{thmD} by
assuming that the global stable manifold of a periodic point $P$
in the affine $cs$-blender $\Gamma$ contains {the folding manifold
with respect to
$\mathcal{B}^{{G}}=\mathcal{B}\times\mathcal{C}^{{G}}$ given in
Example~\ref{exap:dobra}. As was explained in
Remark~\ref{rem:rob-dobra} this folding manifold is $C^2$-robust.}
Thus,  the stable manifold $W^s(P_g)$ of the continuation $P_g$ of
$P$ contains a folding manifold with respect to
$\mathcal{B}^{{G}}$ for all small enough $C^2$-perturbations $g$
of $\Phi$. Then, Proposition~\ref{prop-tangency-folding} implies
that  {there is $z\in \Gamma_g$ such that $W^u_{loc}(z)$ and
$W^s(P_g)$ have a tangency of codimension $u>0$}. Thus, we get
that $\Phi$ has a $C^2$-robust homoclininc tangency of codimension
$u$. Moreover, using ~\eqref{eq:tangencia} we can conclude that
the tangency  {cannot be inside a strong partially hyperbolic set.
To see this, notice that $T_x\mathcal{S}=E^{ss}\oplus F$ where
$E^{ss}=\mathbb{R}^{ss}\times\{0^{u}\}\times\{0^c\}$,
$T_xW^u_{loc}(z)\subset F$ and $T_xW^u_{loc}(z)\in \mathcal{C}^G$.
Oberve that $E^{ss}$ is in the strong stable cone-field on
$\mathcal{B}$ of $\Phi$, while $\mathcal{C}^G$ can be identified
with the unstable cone on $\mathcal{B}$ of $\Phi$. Since both
cone-fields are disjoint, we obtain that $T_xW^u_{loc}(z)$ cannot
be the strong stable direction. This proves that the tangency must
be inside a weak partially hyperbolic set.} Finally, recall that
$c=u^2$ and then $m=ss+c+u>
u^2+u$, completing the proof. % of the theorem.

\section{ {Degenerate} unfoldings of  {tangencies}}
\label{sec:unfoding}
%\subsection{}
%\textcolor{green}{I dont understand what do you want to say
%in the next 3 sentences, do you need do add this?}
 { Recall that by a tangency we understand the
opposite of a transverse intersection.}
%That is, an intersection of codimension
%$c>0$.}  {A tangency of codimension $c>0$ is
%phenomenon that lives in a submanifold of codimension $c$ in the
%space of configurations. A generic (or non-degenerate) unfolding
%usually is defined being transverse (in a differentiable sense) to
%this submanifold.} Now,
We will introduce the notion of {degenerate}
unfoldings %of a bifurcation
of a  {tangency}
% of codimenion $c>0$}
between two submanifolds $\mathcal{L}_0$  {and} $\mathcal{S}_0$.
 Let $\mathcal{L}=(\mathcal{L}_a)_a$ and
$\mathcal{S}=(\mathcal{S}_a)_a$ be  $k$-parameter families of
submanifolds $\mathcal{L}_a$ and $\mathcal{S}_a$ of $\mathcal{M}$
diffeomorphic to $\mathcal{L}_0$ and $\mathcal{S}_0$ respectively
by families of diffeomorphisms $C^{d,r}$-close to the identity
 {with $0<d\leq r$.}

\begin{defi}
We say that $\mathcal{L}$  {and} $\mathcal{S}$ has a
 {tangency} %of codimension at least $c>0$}
%\emph{quasi-transverse intersection} of codimension $c>0$
at $a=0$
which unfolds \emph{$C^d$-{degenerate}} if there exist $x=(x_a)_a,
y=(y_a)_a \in C^d(\mathbb{I}^k,\mathcal{M})$ such that
$$
    x_a \in \mathcal{L}_a \  \ \text{and} \  \ y_a \in \mathcal{S}_a  \ \
    \text{so that}
    \ \
    d(x_a,y_a)=o(\|a\|^{ {d}}) \ \ \text{at $a=0$}.
    %\ \ and \ \ $c_{x_0}(\mathcal{L}_0,\mathcal{S}_0)\textcolor{\geq} c$.}
$$
%We say that the unfolding is \emph{$C^d$-generic} if it is
%$C^{s}$-generic for $1\leq s \leq d-1$ but not
%$C^{d}$-non-generic.
\end{defi}

A useful formalism to define a $C^d$-{degenerate} unfolding of a
 {tangency}  is to consider the spa\-ce of jets
$J^d_0(\mathbb{I}^k,\mathcal{M})$ whose elements are the
coefficients of the truncated~Taylor~series~at~$a=0$,
$$
   J_0^d(z)=(z_a,\partial^1_a z_a,\partial^2_a
   z_a,\dots,\partial^d_az_a)_{|_{\,a=0}}
   \ \ \text{with} \ \ z=(z_a)_a \in
   C^d(\mathbb{I}^k,\mathcal{M}).
$$
For a more precise definition see~\S\ref{sec:jets}.  Then
$$
   d(x_a,y_a)=o(\|a\|^{{ {d}}}) \ \ \text{at $a=0$} \quad \text{if and only
   if} \quad J^d_0(x)=J_0^d(y).
$$
The set $J_0^d(\mathbb{I}^k,\mathcal{M})$ can be endowed with a
smooth manifold structure sometimes called the \emph{manifold of
$(d,k)$-velocities over $\mathcal{M}$}.
%~(see forinstance \cite{}).

%Fix an integer $1\leq \ell \leq \dim M$.

Next, we will be interested in unfoldings which control not only
the separation of points on the manifold, but also the separation
of the tangent spaces.  {Hence, to control this separation, we
assume $d<r$ and introduce the following definition.}
%For this we need to introduce the following.
Let $G_{ {\ell}}(\mathcal{M})$ be the $ {\ell}$-th Grassmannian
bundle of $\mathcal{M}$. That is, the fiber bundle over
$\mathcal{M}$ whose fibers are the $ {\ell}$-th Grassmannian
manifold of the tangent space $T_p\mathcal{M}$, i.e.,
$$
    G_{ {\ell}}(\mathcal{M}) = \bigsqcup_{p\in \mathcal{M}} G_ { {\ell}}(\mathcal{M})_p =
    \bigcup_{p\in \mathcal{M}} \{p\}\times G({ {\ell}},T_p\mathcal{M})
$$
where $G({ {\ell}},T_p\mathcal{M})$ is the set of all ${
{\ell}}$-dimensional linear subspaces of $T_pM$.
% The $c$-th Grassmannian bundle comes
% equipped with a natural topology and smooth structure so as to
% make it into a manifold itself.
%In fact, since the base space and
%the fiber spaces are compact spaces then $G_c(\mathcal{M})$ is
%also a compact manifold.

%\footnote{Ver\href{http://math.stackexchange.com/questions/323711/fiber-bundle-is-compact-if-base-and-fiber-are}{mathflow}}
%Now, we denote by $G_c(\mathcal{M})$  the fiber bundle over
%$\mathcal{M}$ whose fibers $G_c(\mathcal{M})_p$ are the space of
%$c$-plane into the tangent space $T_p\mathcal{M}$, i.e. the
%Grassmannian manifold $G_c(T_p\mathcal{M})$ .
%We also denote by $G_c(\mathcal{M})_{\Lambda}$ the restriction of
%the fiber bundle $G_c(\mathcal{M})$ over a subset $\Lambda$ of
%$\mathcal{M}$.

\begin{defi} \label{def:tangencia}
We say that $\mathcal{L}$  {and} $\mathcal{S}$ has a tangency of
 {dimension $\ell>0$} at $a=0$ which unfolds
$C^d$-{degenerate} if there exist $x=(x_a)_a, y=(y_a)_a \in
C^d(\mathbb{I}^k,G_{ {\ell}}(\mathcal{M}))$ such that
$$
    x_a \in G_{ {\ell}}(\mathcal{L}_a) \  \ \text{and} \  \ y_a \in G_{ {\ell}}(\mathcal{S}_a)  \ \ \text{so that}
    \ \
    d(x_a,y_a)=o(\|a\|^{ {d}}) \ \ \text{at $a=0$.}
$$
Using the formalism of jets, the unfolding is $C^d$-{degenerate}
if and only if $
 J^d_0(x)=J_0^d(y)$.
 %We say that the unfolding is $C^d$-generic if
%it is $C^{s}$-generic for $1\leq s \leq d-1$ but not
%$C^{d}$-non-generic.
\end{defi}
\begin{rem}
In the terminology of~\cite{Ber16}, $C^d$-{degenerate} unfoldings
of a tangency  {of dimension one (between curves in dimension
two)} are called \emph{$C^d$-paratangencies.}
\end{rem}

% {Recall that, as we have notice in the
%introduction, if the codimension of $\mathcal{L}_0$ coincides with
%the dimension of $\mathcal{S}_0$, then the dimension of the
%tangency coincides the codimension. In this case, we can also say
%that $\mathcal{L}$ and $\mathcal{S}$ unfolds  }

\section{Parablenders}
\label{sec:parablender} The concept of parablender was initially
introduced by Berger~\cite{Ber16} for endomorphisms (see
also~\cite{BCP17,Ber17,Ber19}). The following generalizes both,
the blender (Definition~\ref{def:blender}) and the definition of
parablender for diffeomorphisms given  {also by Berger}
in~\cite[Example~1.21,Def.~1.23]{Ber19}.

%
%The notion of a parablender was initially introduced by
%Berger~\cite{Ber16} for $C^d$-families of $C^r$-endomorphisms of a
%surface (see also~\cite{BCP17}). Here we extend this notion to
%$C^d$-families of $C^r$-diffeomorphisms of manifolds with
%dimension $m\geq 3$.

\begin{defi} \label{def:parablender}
Let $\Gamma_0$ be a $cs$-blender  {of central dimension} $c\geq 1$
 {and strong stable dimension
$d_{ss}=\mathrm{Ind}^s(\Gamma_0)-c$} of a $C^r$-diffeomorphism
$f_0$ of $\mathcal{M}$. Consider a $k$-parameter $C^d$-family
$f=(f_a)_a$ of $C^r$-diffeomorphisms of $\mathcal{M}$  unfolding
%\footnote{It is said to be that the family $f=(f_a)_a$ unfolds a dynamics $g$ at $a=0$ if $f_0=g$.}
$f_0$ at $a=0$. A~family $\Gamma=(\Gamma_a)_a$ of compact sets
$\Gamma_a$ of $\mathcal{M}$ is said to be \emph{
{$C^d$-}$cs$-parablender  {at $a=0$} of  {central dimension} $c$}
for $f$ if
\begin{enumerate}[itemsep=0.1cm]
\item $\Gamma_a$ is the continuation for $f_a$
of the $cs$-blender $\Gamma_0$  for all $a\in\mathbb{I}^k$,
\item there exists an open set $\mathscr{D}$ of  $k$-parameter
$C^d$-families $\mathcal{D}=(\mathcal{D}_a)_a$,
 {where each $\mathcal{D}_a$ is a $C^r$-embedded
$d_{ss}$-dimensional disc into $\mathcal{M}$,}
\item there exists a $C^{d,r}$-neighborhood $\mathscr{U}$ of
$\Phi$,
\end{enumerate}
such that for every $g=(g_a)_a \in \mathscr{U}$ and
$\mathcal{D}=(\mathcal{D}_a)_a \in \mathscr{D}^{ss}$ it holds that
$$
\text{$W^u_{loc}(\Gamma_g)$  {and} $\mathcal{D}$ has a
 {tangency} at $a=0$  which unfolds
$C^d$-{degenerate}.}
$$
That is, there are $x=(x_a)_a$, $y=(y_a)_a$, $z=(z_a)_a\in
C^d(\mathbb{I}^k,\mathcal{M})$ with
$$
z_a \in \Gamma_{a,g}, \ \ x_a \in W^u_{loc}(z_a) \ \ \text{and} \
\ y_a\in \mathcal{D}_a \ \ \text{such that} \ \
d(x_a,y_a)=o(\|a\|^{ {d}}) \ \ \text{at $a=0$}
$$
where $\Gamma_{a,g}$ is the  continuation for $g_a$ of the
$cs$-blender $\Gamma_a$ for all $a\in \mathbb{I}^k$.

A \emph{$C^d$-$cu$-parablender}  {at $a=0$}  of
 {central dimension $c$} is
 {$C^d$-}$cs$-parablender  for
$f^{-1}=(f^{-1}_a)^{}_a$.
%Finally a
%\emph{$double$-parablender of codimension $(cs,cu)$} is a
%$cs$-parablender of codimension $cs>0$ and $cu$-blender of
%codimension $cu>0$. Notice that this implies that a
%$double$-blender $\Gamma$ has a partially hyperbolic splitting of
%the form $T_\Gamma \mathcal{M} = E^{ss}\oplus E^c \oplus E^{uu}$
%where $E^{c}=E^{cs}\oplus E^{cu}$ with $\dim E^{cs}=cs$ and
%$\dim{E}^{cu}=cu$.
\end{defi}

\begin{rem}
For $k=0$ and $d=r=1$, i.e., when there are no parameters and the
class is $C^1$, the above definition of a parablender coincides
with the definition of a blender.  {As was mentioned after
Definition~\ref{def:blender}, the tangency between
$W^u_{loc}(z_0)$ and $\mathcal{D}_0$ has codimension at least $c$.
In general, it is a quasi-transverse intersection of codimension
exactly $c$.}
\end{rem}

%\begin{rem}
%If $\Gamma$ is a parablender in the
%$C^{d,d}$-topology then is a parablender in the $C^{d,r}$-topology
%for all $0<d\leq r$. Thus, the positive integer $d>0$ indicates
%both, the order of degeneration of the unfolding and the lower
%regularity of robustness. For this reason, in what follows we will
%refer to $\Gamma$ as a \emph{$C^d$-parablender}.
%\end{rem}

\begin{rem}
For simplicity, to introduce parablenders, we have chosen the
parameter $a=0$. However, we  can also define a parablender at any
other parameter $a=a_0$ with $a_0\in \mathbb{I}^k$. Moreover, we
will say that a $k$-parameter family $f=(f_a)_a$ has a parablender
$\Gamma=(\Gamma_a)_a$ at \mbox{\emph{any parameter}} when $\Gamma$
is a parablender for $f$ at $a=a_0$ for all $a_0\in \mathbb{I}^k$
with $\mathscr{D}$ and $\mathscr{U}$ independent of the value
$a_0$.
\end{rem}

Parablenders are a mechanism to provide $C^{d,r}$-open sets of
families of diffeomorphisms which are $C^d$-{degenerate}
unfoldings of  {tangencies (of dimension zero in general)}. The
following theorem proves the existence of such open sets.

\begin{thm} \label{thm-parablender}
Any manifold  of dimension $m> c+1$ admits a $k$\,-\,parameter
$C^d$-family of $C^r$-diffeomorphisms with { $0<d < r$} having a
 {$C^d$-}parablender of  {central
dimension $c\geq 1$} \mbox{at any parameter.}
\end{thm}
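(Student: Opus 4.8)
The plan is to realize $f=(f_a)_a$ as a parametrized affine skew-product whose \emph{fiber dynamics, once lifted to the space of $d$-jets of paths, becomes an ordinary affine iterated function system satisfying the covering criterium}; the parablender is then obtained by applying Theorem~\ref{thmBKR} to this lifted skew-product. Since $m>c+1$ we write $\mathbb{R}^m=\mathbb{R}^n\times\mathbb{R}^c$ with $n=m-c\ge 2$ and fix, as in \S\ref{sec:affine-blender}, a \emph{parameter-independent} horseshoe $F$ on $\mathbb{R}^n$ conjugate to a full shift on $\kappa$ symbols, with hyperbolicity rate $\nu$ and $DF^{-1}$-invariant stable cone-field $\mathcal{C}^{ss}_\alpha$. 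All the parameter dependence is placed in the fiber: I take affine $(\lambda,\beta)$-contractions $\phi_{\ell,a}(y)=T_\ell y+b_\ell(a)$ of $D=(-2,2)^c$ with $\nu<\lambda<\beta<1$, where the $b_\ell\colon\mathbb{I}^k\to\mathbb{R}^c$ are polynomials of degree $\le d$ to be fixed below, and set $f_a=F\ltimes(\phi_{1,a},\dots,\phi_{\kappa,a})$. For $a=0$ this is exactly an affine $cs$-blender $\Gamma_0$ of central dimension $c$, and $\Gamma=(\Gamma_a)_a$ will be its continuation.

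Next I would lift the fiber to jets. Let $J=J^d_0(\mathbb{I}^k,\mathbb{R}^c)\cong\mathbb{R}^{cN}$, $N=\binom{d+k}{k}$, be the space of $d$-jets at $a=0$ of paths in $\mathbb{R}^c$. The fiber map acts on a path $v=(v_a)_a$ by $(\phi_\ell\cdot v)_a=\phi_{\ell,a}(v_a)$, and passing to $d$-jets gives the affine map $\hat\phi_\ell(\hat v)=\hat T_\ell\hat v+\hat b_\ell$ on $J$, with $\hat T_\ell=T_\ell\otimes\mathrm{Id}_N$ and $\hat b_\ell=J^d_0(b_\ell)$. Since $\hat T_\ell$ has the same spectrum as $T_\ell$, each $\hat\phi_\ell$ is again a $(\lambda,\beta)$-contraction, so the domination $\nu<\lambda$ survives verbatim. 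Keeping the base untouched, $f$ lifts to the single skew-product $\hat\Phi=F\ltimes(\hat\phi_1,\dots,\hat\phi_\kappa)$ on $\mathbb{R}^n\times J$, whose horseshoe has strong-stable dimension $ss$, unstable dimension $u$, and central (fiber-jet) dimension $cN$. Because any jet in $J$ is the $d$-jet of some degree-$\le d$ polynomial, I may prescribe the translations $\hat b_\ell$ freely; choosing them as in Example~\ref{exa-afin-blender} applied in dimension $cN$, with $\kappa$ large enough (depending on $c,d,k$), yields an open box $\hat B\subset J$ with $\overline{\hat B}\subset\hat\phi_1(\hat B)\cup\dots\cup\hat\phi_\kappa(\hat B)$. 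Theorem~\ref{thmBKR} then produces a $cs$-blender $\hat\Gamma$ of central dimension $cN$ for $\hat\Phi$, with superposition region the $(\alpha,\nu,\delta)$-horizontal $ss$-dimensional discs in $\hat{\mathcal{B}}=\mathsf{R}\times\hat B$.

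It remains to read off the parablender property for $f$ from the jet-blender $\hat\Gamma$. The $0$-jet evaluation sends $\hat\Gamma$ onto the fiberwise blenders $\Gamma_a$ of central dimension $c$, so condition (i) of Definition~\ref{def:parablender} holds. Any $k$-parameter $C^d$-family $\mathcal{D}=(\mathcal{D}_a)_a$ of $ss$-dimensional discs prolongs, through the $d$-jet in $a$ of the paths $a\mapsto\mathcal{D}_a(\xi)$, to a single $ss$-dimensional disc $\hat{\mathcal{D}}$ in $\mathbb{R}^n\times J$; declaring $\mathscr{D}$ to be those families whose prolongation is $(\alpha,\nu,\delta)$-horizontal in $\hat{\mathcal{B}}$ gives the required open set. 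Since $d<r$, the fiber-jet lift of any $C^{d,r}$-perturbation $g=(g_a)_a$ of $f$ is a $C^1$-perturbation $\hat g$ of $\hat\Phi$ (the base horseshoe is structurally stable, hence for each $a$ still conjugate to the shift on the same $\kappa$ symbols); thus a $C^{d,r}$-neighborhood $\mathscr{U}$ of $f$ maps into a $C^1$-neighborhood of $\hat\Phi$ and the blender intersection $W^u_{loc}(\hat\Gamma_{\hat g})\cap\hat{\mathcal{D}}\ne\emptyset$ persists. Unpacking one such intersection point through the identification $J^d_0(x)=J^d_0(y)$ of \S\ref{sec:unfoding} yields $z_a\in\Gamma_{a,g}$, $x_a\in W^u_{loc}(z_a)$ and $y_a\in\mathcal{D}_a$ with $d(x_a,y_a)=o(\|a\|^d)$, which is precisely the $C^d$-degenerate unfolding required in Definition~\ref{def:parablender}.

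For the parablender \emph{at any parameter} I would repeat the argument with $J^d_{a_0}$ in place of $J^d_0$. The passage from the polynomial coefficients of the $b_\ell$ to their $a_0$-jets is the Taylor-shift $M_{a_0}$, a unipotent isomorphism depending polynomially on $a_0$; over the compact cube $\mathbb{I}^k$ the family $\{M_{a_0}\}$ is uniformly bounded with uniformly bounded inverse. I expect the main obstacle to be exactly this uniformity: one must choose the finitely many translations $\hat b_\ell$ (equivalently the polynomials $b_\ell$) so that the covering of $\hat B$ holds with enough overlap to be \emph{simultaneously} robust under every $M_{a_0}$, while at the same time keeping each $\phi_{\ell,a}$ a genuine contraction of the \emph{fixed} domain $\overline{D}$ for all $a\in\mathbb{I}^k$. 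Enlarging $\kappa$ to create this uniform margin, and then fixing $\nu$ small and $\alpha$ large so that the domination and cone conditions of Theorem~\ref{thmBKR} hold at every $a_0$, delivers the parablender at any parameter with $\mathscr{D}$ and $\mathscr{U}$ independent of $a_0$, completing the proof.
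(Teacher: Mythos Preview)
Your overall strategy---place all parameter dependence in affine fiber maps, lift to $d$-jets, and invoke the covering criterium (Theorem~\ref{thmBKR}) on the lifted IFS---is exactly the paper's. The gap is in \emph{which} space you lift to. You keep the base as $\mathbb{R}^n$ and lift only the fiber, obtaining $\hat\Phi=F\ltimes(\hat\phi_1,\dots,\hat\phi_\kappa)$ on $\mathbb{R}^n\times J$ with strong-stable dimension~$ss$. This is fine for the unperturbed family, but the step ``the fiber-jet lift of any $C^{d,r}$-perturbation $g=(g_a)_a$ of $f$ is a $C^1$-perturbation $\hat g$ of $\hat\Phi$'' does not go through: a generic perturbation $g$ is \emph{not} a skew-product over any fixed base, so there is no well-defined map $\hat g$ on $\mathbb{R}^n\times J$ to which the blender robustness could apply. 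Structural stability of the base horseshoe gives you conjugacies $h_a$ for each frozen $a$, but these depend on $a$ and do not let you factor the family $(g_a)_a$ through~$F$. Similarly, your prolongation of a family of discs $(\mathcal D_a)_a$ to an $ss$-dimensional disc in $\mathbb{R}^n\times J$ discards the $a$-dependence of the $\mathbb{R}^n$-component, which is exactly what a perturbation will create.

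The paper resolves this by lifting to the \emph{full} jet space $J(\mathbb{R}^m)=J(\mathbb{R}^n)\times J(\mathbb{R}^c)$, on which the induced map $\widehat g\bigl(J(z)\bigr)=J(g\circ z)$ is canonically defined for \emph{every} family $g$, and $g\mapsto\widehat g$ is continuous from $C^{d,r}$ to $C^1$ because $d<r$. The base of the skew-product is then $\widehat F$ on $J(\mathsf{R})\subset J(\mathbb{R}^n)$, with horseshoe $\widehat\Lambda=\Lambda\times\{0\}$ of strong-stable index $\widehat d_{ss}=ss\cdot\binom{d+k}{d}$ rather than $ss$; the superposition discs are correspondingly $\widehat d_{ss}$-dimensional, parametrized by $J(\xi)\in\overline{\widehat{\mathsf{R}}_{ss}}$ via $\widehat{\mathcal D}(J(\xi))=J(\mathcal D\circ\xi)$. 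With this correction the rest of your argument---the fiber cover $\overline{\widehat B}\subset\bigcup_\ell\hat\phi_\ell(\widehat B)$ as in Example~\ref{exa}, the reading-off of $J(x)=J(y)$ as the degenerate unfolding, and the Taylor-shift treatment of ``at any parameter''---matches the paper and is correct.
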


We split the proof of this theorem into several parts.
 {The basic idea to obtain a parablender is by
constructing a blender for the induced dynamics $\widehat{f}$ in
the space of jets using a parametric family $f=(f_a)_a$ of
diffeomorphisms. To do this, first in~\S\ref{sec:jets}, we
introduce the jet space and the induced dynamics. After that, we
consider in~\S\ref{sec:affine-blenders-family} a class of
parametric families of diffeomorphisms with a family
$\Gamma=(\Gamma_a)_a$, where each $\Gamma_a$ is an affine
$cs$-blender as constructed in~\S\ref{sec:affine-blender}. In
order to see that this family of blenders is, indeed, a
parablender we show in~\S\ref{sec:blender-jets} that the induced
dynamics on the jet space has a $cs$-blender $\widehat{\Gamma}$.
To conclude that $\Gamma$ is a parablender we also need to provide
an open set $\mathscr{D}$ of $k$-parametric families of discs.
This is done in~\S\ref{sec:discos-afin-jet-blender} where
additionally we show that each family of discs
$\mathcal{D}=(\mathcal{D}_a)_a$ in $\mathscr{D}$  induces a disc
$\widehat{D}$ of jets in a superposition region of the blender
$\widehat{\Gamma}$. Finally, we show
in~\S\ref{sec:parablender-from-blender} that the robust
intersection between each of these discs $\widehat{\mathcal{D}}$
of jets and the local unstable manifolds of $\widehat{\Gamma}$
implies a degenerate unfolding of a tangency between the family
$\mathcal{D}$ and the local unstable manifold of  $\Gamma$ in the
sense of Definition~\ref{def:parablender}.}

First of all, notice that we will provide the existence of
parablenders by constructing these objets in local coordinates.
Thus, again we will work in an open set of
$\mathbb{R}^m=\mathbb{R}^n\times \mathbb{R}^c$ with $n\geq 2$ and
$c\geq 1$. We also ask that $n=ss+u$.

\subsection{Jet space}
\label{sec:jets} Let $f=(f_a)_a$ be a $k$-parameter $C^{d}$-family
of $C^r$-diffeomorphisms of $\mathbb{R}^m$ with $0<d \leq r$. To
analyze
%\footnote{Kinematics studies the motion of points
%in the phase space by describing the geometry of the system and
%declaring the initial conditions of any known values of position,
%velocity and/or acceleration of points within the system. Then,
%using arguments from geometry, the position, velocity and
%acceleration of any unknown parts of the system can be
%determined.}
the unfolding of $f_a$ for $a\in\mathbb{I}^k$,
%of the dynamic $f_0$. To do this,
we will consider on $J^d_0(\mathbb{I}^k,\mathbb{R}^m)$ the map
$\widehat{f}$ \ induced
%:J_0^d(\mathbb{I}^c,\mathbb{R}^m) \toJ_0^d(\mathbb{I}^c,\mathbb{R}^m)$
by the family $f=(f_a)_a$ and given by
%$$
%  \widehat{f}: J_0^d(\mathbb{I}^c,\mathbb{R}^m) \to
%  J_0^d(\mathbb{I}^c,\mathbb{R}^m), \quad
%  \widehat{f}(x_a,\partial^1_a x_a,\dots,\partial^d_a x_a)_{|_{\,a=0}}=
%  (f_a(x_a),\partial^1_a f_a(x_a), \dots, \partial^d_a f_a(x_a))_{|_{\,a=0}}
%$$
$$
\widehat{f}(J_0^d(z))=J_0^d(f\circ z)=(f_a(z_a),\partial^1_a
f_a(z_a), \dots, \partial^d_a f_a(z_a))_{|_{\,a=0}}
   \ \ \text{with} \ \ z=(z_a)_a \in
   C^d(\mathbb{I}^k,\mathbb{R}^m).
$$
Here $J^d_0(\mathbb{I}^k,\mathbb{R}^m)$ denotes the $d$-th order
jet space at $a=0$, i.e., the set of equivalence classes
$J^d_0(z)$ where $z=(z_a)_a\in C^d(\mathbb{I}^k,\mathbb{R}^m)$.
The equivalent relation is defined by declaring that
$J^d_0(u)=J^d_0(v)$ if the functions $u$ and $v$ have all of their
partial derivatives equal at $a=0$ up to $d$-th order. A useful
choice of a representative for $J_0^d(z)$ is the $d$-th order
Taylor approximation of $z$ at $a=0$. This polynomial is
completely determined by the derivatives of $z$ at $a=0$, a finite
list of numbers. Therefore, it make sense to identify
%often use the identification
%of
$J_0^d(\mathbb{I}^k,\mathbb{R}^m)$ with $\mathbb{R}^m \times
\mathcal{J}^d(k,m)$ where
%set
$$
\mathcal{J}^d(k,m) =
\prod_{i=1}^{d}\mathscr{L}^i_{sym}(\mathbb{R}^k,\mathbb{R}^m)
$$
%where
and  $\mathscr{L}^i_{sym}(\mathbb{R}^k,\mathbb{R}^m)$ denotes the
space of symmetric $i$-linear maps from $\mathbb{R}^k$ to
$\mathbb{R}^m$. Hence, clearly $J_0^d(\mathbb{I}^k,\mathbb{R}^m)$
is a Euclidian vector space of dimension
$$
  \dim J_0^d(\mathbb{I}^k,\mathbb{R}^m)= m \cdot \binom{d+k}{d} = \frac{m \cdot (d+k)!}{d! \cdot
  k!}.
$$

\begin{rem}
\label{rem:parablender-regularidad} Notice that the map
$\widehat{f}$ is of class $C^{r-d}$.
%%Moreover, if $f$ is affine then $\widehat{f}$ is a linear map.
%In order to construct blenders for $\widehat{f}$ we need to restrict
%ours analysis to $0<d< r$ to get that $\widehat{f}$ is at least $C^1$.
%In particular $r\geq 2$.
\end{rem}

\begin{notation} In order to simplify notation write
$$
   J(\mathbb{R}^m) \eqdef J_0^d(\mathbb{I}^k,\mathbb{R}^m) \qquad
   \text{and} \qquad J(z)\eqdef J^d_0(z)=(z_a,\partial^1_a z_a,\dots,\partial^d_a z_a)_{|_{\,a=0}}.
$$
%\textcolor{blue}{We also denote $J(z)=(z^{}_a,\partial^i_a
%z^{}_a)_{|_{\,a=0}}$ instance $(z_a,\partial^1_a
%z_a,\dots,\partial^d_a z_a)_{|_{\,a=0}}$.}
Sometimes, by
considering $z_a=(x_a,y_a) \in \mathbb{R}^n\times \mathbb{R}^c$,
we will split the manifold of $(d,k)$-velocities over
$\mathbb{R}^m$ (i.e., the space of $d$-jets from $\mathbb{I}^k$ to
$\mathbb{R}^m$ at $a=0$) in the form of $
J(\mathbb{R}^m)=J(\mathbb{R}^n)\times J(\mathbb{R}^c)
%\text{by considering $z_a=(x_a,y_a) \in \mathbb{R}^n\times \mathbb{R}^c$}
$~and
$$
J(z)=(J(x),J(y)) \quad  \text{where $x=(x_a)_a  \in
   C^d(\mathbb{I}^k,\mathbb{R}^n)$ and $y=(y_a)_a  \in
   C^d(\mathbb{I}^k,\mathbb{R}^c)$.}
$$
Moreover, denote by $J({\Lambda})$ the subset of $J(\mathbb{R}^*)$
of $d$-jets $J(z)$ at $a=0$ of families of points $z=(z_a)_a\in
C^{d}(\mathbb{I}^k,\mathbb{R}^*)$ such that $z_0 \in \Lambda$,
where $\Lambda\subset \mathbb{R}^*$ and $*\in \{m,n,c\}$. Also,
denote by $
\text{$\mathscr{P}_*:J(\mathbb{R}^m)\to \mathbb{R}^*$ %\quad
the canonical projection onto $\mathbb{R}^*$ with $*\in
\{ss,u,n,c,m\}$.}$
\end{notation}

%We now take a particular $C^{d,r}$-familiy diffeomorphism of
%$\mathbb{R}^m$.

\subsection{A family of affine blenders} \label{sec:affine-blenders-family}
We will take a $C^r$-diffeomorphism  $\Phi_0$ of $\mathbb{R}^m$
locally defined as the skew-product given
in~\S\ref{sec:affine-blender}.
%$$
%\Phi_0=F\ltimes(\phi_{1,0},\dots,\phi_{\kappa,0})  \ \  \text{on}
%\ \ \mathcal{U}=(\mathsf{R}_1\times D)\cup\dots\cup
%(\mathsf{R}_\kappa\times D).
%$$
%where $\kappa=2^c$, $D=[-2,2]^c$ and the maps $\phi^i_0$ are the direct
%product of $c$-times the some of the maps $\phi^{\pm}(y)=\lambda y
%\pm 1$ for $ y \in [-2,2]$.
In particular, we have an affine $cs$-blender $\Gamma_0$ for
$\Phi_0$ in the cube $[-2,2]^{m}$  {having the family of
almost-horizonal $C^1$-discs in $\mathcal{B}=\mathsf{R}\times B$
as a superposition region}. Here $B$ is an open neighborhood of
$0$ in $D=(-2,2)^c$ satisfying
%that
%$$
%  \overline{B} \subset \phi_{1,0}(B)\cup \dots \cup \phi_{\ell,0}(B)
%$$
the covering property~\eqref{eq:cover} and
$\mathsf{R}=\mathsf{R}_1\cup\dots\cup\mathsf{R}_\kappa$. Now, we
will take a particular family $\Phi=(\Phi_a)_a$, unfolding
$\Phi_0$ at $a=0$. Namely, we consider $C^r$-diffeomorphisms
$\Phi_a$ locally defined in a similar way by means of
skew-products of the form
$$
   \Phi_a=F\ltimes(\phi_{1,a},\dots,\phi_{\kappa,a})
\quad  \text{on} \ \ \mathcal{U}=(\mathsf{R}_1\times
D)\cup\dots\cup (\mathsf{R}_\kappa\times D)
$$
where $\phi_{\ell,a}$ are $k$-parameter $C^d$-families of affine
$(\lambda,\beta)$-contractions on $D$ for $\nu<\lambda<\beta<1$.
That is, $\phi_\ell=(\phi_{\ell,a})_a$ is a $k$-parameter
$C^d$-family of $C^r$-diffeomorphisms $\phi_{\ell,a}$ of
$\mathbb{R}^c$ such that $\phi_{\ell,a}(\overline{D})\subset D$
and there are linear maps $T_{\ell,a}:\mathbb{R}^c\to
\mathbb{R}^c$ so that
$$
\text{$D\phi_{\ell,a}(y)=T_{\ell,a}$ for all $y\in\overline{D}$ \
\ and  \ \ $\lambda<m(T_{\ell,a})\leq \|T_{\ell,a}\|<\beta$ for
$\ell=1,\dots,\kappa$ and $a\in \mathbb{I}^k$.}
$$
%for $\ell=1,\dots,\kappa$. %\phi^{\pm}_a(y)=(\lambda+a)y\pm 1 \ \ \text{for $y\in [-2,2]$}.
Moreover, we ask that { a bounded open neighborhood $\widehat{B}$
of  the $d$-jet $0$ in $J(\mathbb{R}^c)$} such that
\begin{equation}
\label{eq:cover-jets}
%  \widehat{B}= \widehat{B}_1 \uplus \dots \uplus \widehat{B}_\kappa \quad \text{and} \ \
%  \widehat{\phi}^{-1}_\ell(\widehat{B}_\ell) \subset \widehat{B}_0 \subset \overline{\widehat{B}_0}
%  \subset \widehat{B} \ \ \text{for all $\ell=1,\dots,\kappa$}
 \overline{\widehat{B}} \subset \widehat{\phi}_1(\widehat{B})\cup \dots\cup
   \widehat{\phi}_\kappa(\widehat{B})
\end{equation}
where $\widehat{\phi}_\ell$ is the induced map on $J(D)$ by the
family $\phi_\ell=(\phi_{\ell,a})^{}_a$, i.e.,
\begin{equation*}
%\label{eq:central-induced-jet-map}
  \widehat{\phi}_\ell(J(y))=J(\phi_\ell\circ y)=
  (\phi_{\ell,a}(y_a), {\partial^1_a \phi_{\ell,a}(y_a),\dots,\partial^d_a \phi_{\ell,a}(y_a)})_{|_{\,a=0}}
\end{equation*}
with $y=(y_a)_a \in C^d(\mathbb{I}^k,\mathbb{R}^c)$ such that $y_0
\in D$.
%In particular,
%we ask that there is a bounded open neighborhood
%$\widehat{B}$ of the jet $0$ in $J(\mathbb{R}^c)$ such that
%\begin{equation}
%$   \overline{\widehat{B}} \subset \widehat{\phi}_1(\widehat{B})\cup \dots\cup
%   \widehat{\phi}_\kappa(\widehat{B})$.
%\end{equation}
Without restriction of generality we can assume  that
$B\times\{0\}\subset \widehat{B}$ where $B$ is the open set given
in~\eqref{eq:cover}.

On the other hand, let $\Gamma_a$ be the affine $cs$-blender
continuation of $\Gamma_0$ for $\Phi_a$. To conclude the proof we
need to prove that $\Gamma=(\Gamma_a)_a$ is a $cs$-parablender of
$\Phi=(\Phi_a)_a$ at $a=0$.

\begin{rem} \label{rem:any-parameter}
The family $\Phi=(\Phi_a)_a$ can be seen as an unfolding of
$\Phi_{a_0}$ for any $a_0 \in \mathbb{I}^k$. Since $\phi_{\ell,a}$
varies $C^d$-continuously with $a\in \mathbb{I}^k$, a similar
covering property as in~\eqref{eq:cover-jets} holds for the maps
$\widehat{\phi}_\ell=J_{a_0}^d[\phi_\ell]$. These are induced by
the families of fiber maps $\phi_{\ell}=(\phi_{\ell,a})_a$ on the
$d$-jet space $J_{a_0}^d(\mathbb{I}^k,\mathbb{R}^c)$ at $a=a_0$
for all sufficiently small parameter $a_0$. That is,
$$
  \widehat{\phi}_\ell(J(y))=
  (\phi_{\ell,a}(y_a), {\partial^1_a \phi_{\ell,a}(y_a),\dots,\partial^d_a \phi_{\ell,a}(y_a)})_{|_{\,a=a_0}}
%\ \ \text{}
$$
with $y=(y_a)_a \in C^d(\mathbb{I}^k,\mathbb{R}^c)$ such that
$y_{a_0} \in D$. In what follows, we will show that $\Gamma$ is a
 {$C^d$-}$cs$-parablender of $\Phi$ at $a=0$.
However, the choice of $a=0$ is only for convenience to fix an
unfolding parameter (and thus a jet space). The same argument
works to prove that $\Gamma$ is a
 {$C^d$-}$cs$-parablender of $\Phi$ at $a=a_0$ for
any $a_0$ close enough to~$0$. In fact, by continuity with respect
to the parameter, we can take an uniform open set $\mathscr{D}$ of
families of discs and an uniform neighborhood $\mathscr{U}$ of the
family $\Phi$ for all $a_0$ close to~$0$. Therefore
$\Gamma=(\Gamma_a)_a$ will be, up to scaling the parametrization,
a  {$C^d$-}$cs$-parablender of the $k$-parametric family
$\Phi=(\Phi_a)_a$ at any value of the parameter $a\in
\mathbb{I}^k$.
\end{rem}

\begin{exap}
\label{exa} %As in Example~\ref{exa-afin-blender} we take
%$\phi_{\pm}(t)=\lambda t \pm 1$ for $t\in [-2,2]$ with
%$1/2<\lambda<1$. Set
%$$
%    \Upsilon=\{\iota=(\iota_1,\dots,\iota_k)\in
%\{0,1,\dots,d\}^k \ \ \text{with}  \ \ 1\leq
%|\iota|=\iota_1+\dots+\iota_k \leq d\} \quad \text{and} \quad
%\Delta=\{-1,1\}^\Upsilon.
%$$
%Each $\delta \in \Delta$ is seen as a function which maps $\iota
%\in \Upsilon$ to $\delta(\iota)\in \{-1,1\}$.
%%Let $s$ be the cardinal the $\Delta$ an
%We take
%\begin{align*}
%    \phi_{\pm,\delta,a}(t)= \phi_\pm(t)+   P_{\delta}(a)
%\end{align*}
%where
%$$
%   P_{\delta}(a) =\sum_{\iota \in \Upsilon} \delta(\iota) \cdot
%    a^\iota \quad \text{being \ \ $a^\iota = a^{\iota_1}\cdot\dots\cdot a^{\iota_k}$ and $\iota=(\iota_1,\dots,\iota_k)$.}
%$$
%Finally, we consider
%\begin{align*}
%       \phi_{\ell,a} = \phi_{i_1,\delta_1,a}\times \dots\times \phi_{i_c,\delta_c,a} \ \
%       \text{on \  $D$ } \ \
%       \text{for any \ $\ell=(i,\delta)=((i_1,\dots,i_c),(\delta_1,\dots,\delta_c))\in \{-,+\}^c\times \Delta^c$.}
%\end{align*}
%Here $\kappa=2^c + \varrho^c$  where $\varrho$ is the cardinal of
%$\Delta$. Moreover, $D\phi_{\ell,a}(x)$ is the diagonal matrix
%$\lambda I$ where $I$ is the identity matrix and thus it does not
%depend on $a$ for all $x\in \overline{D}$.
Let $\phi(t)=\lambda t$ for $t\in [-2,2]$ with $1/2<\lambda<1$.
Set
$$
    \Upsilon=\{\iota=(\iota_1,\dots,\iota_k)\in
\{0,1,\dots,d\}^k \ \ \text{with}  \ \
|\iota|=\iota_1+\dots+\iota_k \leq d\} \ \ \text{and} \ \
\Delta=(1-\lambda)\cdot\{-1,+1\}^\Upsilon.
$$
Each $\delta \in \Delta$ is seen as a function which maps $\iota
\in \Upsilon$ to $\delta(\iota)\in \{-(1-\lambda),+(1-\lambda)\}$.
%Let $s$ be the cardinal the $\Delta$ an
Take
\begin{align*}
    \phi_{\delta,a}(t)= \phi(t)+   P_{\delta}(a) \quad
    \text{for \ $\delta \in \Delta$,  \  $a\in \mathbb{I}^k$ \ and \  $t \in [-2,2]$  }
\end{align*}
where
$$
   P_{\delta}(a) =\sum_{\iota \in \Upsilon} \delta(\iota) \
    a^\iota \quad \text{with \ \ $a^\iota = a^{\iota_1}_1\cdots a^{\iota_k}_k$ \ and \ $\iota=(\iota_1,\dots,\iota_k)$.}
$$
Finally, consider
\begin{align}
\label{eq:multi-index}
       \phi_{\ell,a} = \phi_{\ell_1,a}\times \dots\times \phi_{\ell_c,a}
       \quad
       \text{on \  $D$ } \ \
       \text{for any \ $\ell=(\ell_1,\dots,\ell_c)\in \Delta^c$.}
\end{align}
Here, $\kappa=\varrho^c$  where $\varrho$ is the cardinal of
$\Delta$. When there are no parameters, i.e., for $k=0$, we
recover the Example~\ref{exa-afin-blender}. Moreover,
$D\phi_{\ell,a}(y)$ is the diagonal matrix $\lambda I$ where $I$
is the identity matrix and thus it does not depend on $a$ for all
$y\in \overline{D}$. Hence,  we can rewrite~\eqref{eq:multi-index}
as
\begin{equation}
\label{eq:mult0}
   \phi_{\ell,a}(y)= \lambda y + P_\ell(a) \quad \text{for \ $y\in D$, \
   $a\in \mathbb{I}^k$ \ and \ $\ell=(\ell_1,\dots,\ell_c) \in \Delta^c$}
\end{equation}
where using multilinear algebra
$$
     P_\ell(a)= \partial^0\ell + \partial^1 \ell \cdot a +
   \frac{1}{2!} \, \partial^2\ell \cdot a^2 +\dots +
   \frac{1}{d!} \,  \partial^d\ell \cdot a^d
$$
with % {$\partial^0\ell \in \mathbb{R}^c$} and
$\partial^i\ell \in
\mathscr{L}^i_\mathrm{sym}(\mathbb{R}^k,\mathbb{R}^c)$ determined
by
%$$
%   \partial^i\ell(e_{1j_1},\dots,e_{ij_i}) =
%   \ell(e_{1j_1}+\dots+e_{ij_i})=\ell(\iota) \ \
%   \text{for $\iota=e_{1j_1}+\dots+e_{ij_i} \in \Upsilon$ with $|\iota|=i$}.
%$$
%Here $\{e_{hj}: j=1,\dots,k\}$  is the canonical basis of
%$\mathbb{R}^k$ for all $h=1,\dots,i$. In particular, if
%$$
%   v_h=\sum_{j=1}^k v_{hj} \, e_{hj} \quad \text{for $h=1,\dots,i$}
%$$
%then
%$$
%   \partial^i\ell(v_1,\dots,v_{i})= \sum_{j_1,\dots,j_i=1}^k
%   \ell(e_{1j_1}+\dots+e_{ij_i}) \, v_{1j_1}\dots v_{ij_i}.
%$$
%Thus,
$$
   \partial^i\ell \cdot a^i \eqdef \partial^i\ell(a,\dots,a)=
   \sum_{|\iota|=i} \ell(\iota) \, a^\iota \quad \text{ {for all $i=0,1,\dots,d$.}}
$$
Now, we can easily compute the induced map $\widehat{\phi}_{\ell}$
on $J(\mathbb{R}^c)$.
%In fact, we will obtain a more general expression by computing the induced map $J_{a_0}^d[\phi_\ell]$ by the
%$k$-parameter family $\phi_{\ell}=(\phi_{\ell,a})_a$
%on the $d$-jet space at $a=a_0$, given by
%$$
%  J_{a_0}^d[\phi_\ell](J_{a_0}^d(x))=
%  (\phi_{\ell,a}(x_a),\partial^i_a \phi_{\ell,a}(x_a))_{|_{\,a=a_0}}
%\ \ \text{with $x=(x_a)_a \in C^d(\mathbb{I}^k,\mathbb{R}^c)$ such
%that $x_0 \in D$}.
%$$
To do this,  {consider $y=(y_a)_a \in
C^d(\mathbb{I}^k,\mathbb{R}^c)$ such that $y_0 \in D$. Denoting by
$$
J(y)=(y^{}_a,\partial^1_ay^{}_a,\dots,\partial^d_ay^{}_a)_{|_{\,a=0}}\eqdef
(\partial^0y,\partial^1y,\dots,\partial^dy),$$}
from~\eqref{eq:mult0} we get that
$$
   \partial^i_a\phi_{\ell,a}(y^{}_a)_{|_{\,a=0}} =  {\lambda\cdot (\partial^i_a y_a)_{|_{\,a=0}}} +
   \partial^i_a P_\ell(a)_{|_{\,a=0}}=\lambda \partial^i y + \partial^i\ell \quad \text{ {for all $i=0,1,\dots,d$.}}
$$
%On one hand, since $T \in \mathscr{L}(\mathbb{R}^k,\mathbb{R}^c)$
%then $DT\in  \mathscr{L}^2(\mathbb{R}^k,\mathbb{R}^c)$ and
%$DT(y)v=T(v)$ for all $y,v\in \mathbb{R}^k$. By means of the chain
%rule, $\partial_a T(y_a)= DT(y_a)\,\partial_a y_a = T(\partial_a
%y_a )$ and hence by induction we obtain that
%$$
%  \partial^i_a T(y_a)_{|_{\,a=0}} = T(\partial^i_a y_a )_{|_{\,a=0}} =
%  (\lambda I) \ \partial^i y
%  \quad \text{for $y=(y_a)_a \in C^d(\mathbb{I}^k,\mathbb{R}^c)$
%  with  $(\partial^i_a y_a)_{|_{\,a=0}}=\partial^i y$}.
%$$
%On the other hand,
%$$
%    \partial^i_a P_\ell(a)_{|_{\,a=0}}= \partial^i\ell.
%$$
%Therefore, by denoting $y_0=\partial^0y$ and hence
%$J(y)=(y_a,\partial^iy_a)_{|_{\,a=0}}\eqdef
%(\partial^0y,\partial^iy)$, we have that
%and
Thus,
$$
%   \phi_{\ell,0}(\partial^0y)=\lambda \, \partial^0y +
%   \partial^0\ell
   %\ \ \text{and} \ \
   \widehat{\phi}_\ell(\partial^0y, {\dots,\partial^d y})=
   \lambda \cdot(\partial^0y, {\dots,\partial^d y})
   +(\partial^0\ell, {\dots,\partial^d\ell}).
$$
Consequently, $\widehat{\phi}_\ell$ is the composition of a
contracting hyperbolic linear map on $J(\mathbb{R}^c)$ with a
translation by the jet
$(\partial^0\ell,\partial^1\ell,\dots,\partial^i\ell)$. Since
$\ell$ runs over $\Delta^c$, we find that the open neighborhood
$\widehat{B}=(-1,1)^{\widehat{d}_c}$ of $0$ in $J(\mathbb{R}^c)$
satisfies~\eqref{eq:cover-jets}, where $\widehat{d}_c=\dim
J(\mathbb{R}^c)$.
\end{exap}

\subsection{Parablenders in the  $C^{d,r}$-topology for $0<d<r$}
According to Remark~\ref{rem:parablender-regularidad}, in order to
construct blenders for the induced map  $\widehat{\Phi}$ by the
$C^{d,r}$-family $\Phi=(\Phi_a)_a$ we  {will} restrict our
analysis to $0<d< r$ to obtain that $\widehat{\Phi}$ is at least
$C^1$.
%This supposes a loss of generality but allows us to give a clear
%explanation of the idea behind the construction.
%Thus, we will
%first give the proof in this particular case and after that, in
%subsection~\S\ref{sec:C^d-parablenders}, we will complete
%Theorem~\ref{thm-parablender} in the case $d=r\geq 1$.

\subsubsection{Blender induced in the jet space}
\label{sec:blender-jets}  {Consider $z=(z_a)_a \in
C^d(\mathbb{I}^k,\mathbb{R}^{m})$ and write $z_a=(x_a,y_a)$ with
$x=(x_a)_a\in C^d(\mathbb{I}^k,\mathbb{R}^n)$ and $y=(y_a)_a\in
C^d(\mathbb{I}^k,\mathbb{R}^c)$.} For each $i=1,\dots,d$, since
$F$ is an affine map which does not depend on $a$, the partial
derivative~is
$$
\partial^i_a \Phi_a(z_a)_{|_{\,
a=0}}=(DF(x_a)\, \partial^i_a x_a, \
\partial^i_a\phi_{\ell,a}(y_a))_{|_{\,a=0}} \ \ \text{where
$z_0=(x_0,y_0)\in \mathsf{R}_\ell\times D$ for
$\ell=1,\dots,\kappa$.}
$$
Hence, the map $\widehat\Phi$ on $J(\mathbb{R}^m)$ induced by the
family $\Phi=(\Phi_a)_a$ restricted to
$J(\mathcal{U})=J(\mathsf{R})\times J(D)$ is given by the
skew-product
$$
\widehat{\Phi}=\widehat F \ltimes
(\widehat{\phi}_1,\dots,\widehat{\phi}_\kappa) \quad \text{on} \ \
J(\mathcal{U})=J(\mathsf{R}_1) \times J(D)\cup \dots \cup
J(\mathsf{R}_\kappa) \times J(D),
$$ where $\widehat F $ acts on
$J({\mathsf{R}})$ given by
%where
%$$ \text{$\widehat{\phi}_\ell$  is the $d$-th order induced map of
%$\phi_\ell=(\phi^\ell_a)^{}_a$ on
%$J^d_0(\mathbb{I}^c,\mathbb{R}^c)_D$}
%$$
%and
%$\widehat F $ acts on $J^d_0(\mathbb{I}^c,\mathbb{R}^n)_{\mathsf{V}}$
%given by
$$
\widehat{F}(J(x))=(F(x_a), \  {DF(x_a)\,
\partial^1_a
   x^{}_a,\dots, DF(x_a)\,\partial^d_a
   x^{}_a)_{|_{\,a=0}}} \ \ \text{where %$x=(x_a)_a \in C^d(\mathbb{I}^k,\mathbb{R}^n)$ such that
   $x_0 \in \mathsf{R}$}
$$
and $\widehat{\phi}_\ell$ is the induced map on $J(D)$ by the
family $\phi_{\ell}=(\phi_{\ell,a})_a$ for $\ell=1,\dots,\kappa$.
Moreover, $\widehat F$ has a horseshoe
$\widehat{\mathsf{\Lambda}}=\mathsf{\Lambda} \times \{0\}$
with, except for multiplicity, the same eigenvalues of $F$  and~stable~index %of $\widehat\Lambda$ is
$$
\widehat{d}_{ss}\eqdef\mathrm{ind}^s(\widehat{\Lambda})= \dim
J(\mathbb{R}^{ss})= ss \cdot \binom{d+k}{d}.
$$
%= ss \times c \, \times \stackrel{d}{\dots} \times \, c$
 {Also the width of the stable cone-field on
$J(\mathsf{R})$ of $\hat{F}$ is the same width $\alpha$ that the
stable cone-field of $F$ has with respect to $\mathsf{R}$.}

Similarly, $\widehat{\phi}_\ell$ on $J(D)$ has also, except
multiplicity, the same eigenvalues of $\phi_{\ell,0}$ on $D$  for
all $\ell=1,\dots, \kappa$. Thus,
$\widehat{F}|_{\widehat{\Lambda}}$ dominates the fiber dynamics
$\widehat{\phi}_1,\dots,\widehat{\phi}_\kappa$ and also by
assumption the covering property~\eqref{eq:cover-jets} holds.
Hence, according to Theorem~\ref{thmBKR}, we have a $cs$-blender
$\widehat{\Gamma}$ of
 {central dimension}  $\widehat{d}_{c}$ %= c \cdot \binom{d+c}{d}$
 for $\widehat{\Phi}$, where
$$\widehat{d}_{c}\eqdef \dim J(\mathbb{R}^c)=c\cdot\binom{d+k}{d}.$$
%$$
 {Additionally, the family of
$( {\alpha},\nu,\delta)$-horizonal discs in
$\widehat{\mathcal{B}}= \widehat{\mathsf{R}} \times \widehat{B}$
is a superposition region of $\widehat{\Gamma}$, where now $\delta
< \lambda \hat{L}/2$ and $0<\hat{L}\leq L$ is the Lebesgue number
of the cover~\eqref{eq:cover-jets}}. Here $\widehat{\mathsf{R}}$
is a bounded open neighborhood on $J(\mathsf{R})$ of
$\widehat{\Lambda}=\Lambda\times\{0\}$. Moreover, by construction,
$\mathscr{P}_m(\widehat{\Gamma})=\Gamma_0$ where $\mathscr{P}_m:
J(\mathbb{R}^m) \to \mathbb{R}^m$ is the canonical projection.

\subsubsection{An open set of families of discs for the
family of affine blenders} \label{sec:discos-afin-jet-blender}
Recall that $\widehat{\mathsf{R}}$ was taken as a bounded
neighborhood on $J(\mathsf{R})$ of
$\widehat{\Lambda}=\Lambda\times\{0\}$. Since
$J(\mathbb{R}^n)=J(\mathbb{R}^{ss})\times J(\mathbb{R}^u)$, there
is no loss of generality in assuming that
\begin{equation} \label{eq:hatR}
\widehat{\mathsf{R}}=\widehat{\mathsf{R}}_{ss} \times
\widehat{\mathsf{R}}_{u} \quad  \text{with  \
$\widehat{\mathsf{R}}_{ss} \subset J(\mathbb{R}^{ss})$ \ \ and \ \
$\widehat{\mathsf{R}}_{u} \subset J(\mathbb{R}^{u})$.}
\end{equation}
 In fact, we can assume that
$\widehat{\mathsf{R}}_{ss}=(-2,2)^{ss} \times B_{ {\rho}}(0)$,
where  {$B_\rho(0)$ denotes the subset of $\mathcal{J}^d(k,ss)$ so
that the symmetric $i$-linear maps have all norm less than
$\rho$.}
%maps sum of multilinear maps of norm less than $\rho$.}
%where $B_R(0)$ denotes an open ball in $\mathcal{J}^d(k,ss)$
%=\prod_{i=1}^d \mathscr{L}^i(\mathbb{R}^k,\mathbb{R}^{ss})$
%of some arbitrarily small radius $R>0$ centering at $0$.
Notice that the closure of $\widehat{\mathsf{R}}_{ss}$
 {can be identified with $[-2,2]^{d_{ss}}\times
[-\rho,\rho]^{\widehat{d}_{ss}-d_{ss}}$}. Hence, without loss of
generality, this set can be used to parameterize the
$(\textcolor{blue}{\alpha,}\nu,\delta)$-horizontal
$\widehat{d}_{ss}$-dimensional discs in
$\widehat{\mathcal{B}}=\widehat{\mathsf{R}}\times\widehat{B}$.

%% \in \mathcal{J}^d(k,n)=\prod_{i=1}^d \mathscr{L}^i(\mathbb{R}^k,\mathbb{R}^{ss})$.
%Moreover, for convenience, we can see both, $ss$-dimensional discs
%in $\mathcal{B}$ and $\widehat{d}_{ss}$-dimensional discs in
%$\widehat{\mathcal{B}}=\widehat{\mathsf{R}}\times \widehat{B}$
%%=\widehat{\mathsf{R}}^{ss}\times \widehat{\mathsf{R}}^u \times \widehat{B}$
%as graph functions over $[-2,2]^{ss}$ and the closure of
%$\widehat{\mathsf{R}}^{ss}$ respectively.

%Fix $\delta>0$ so that $\delta<\lambda L/2$ where now $L$ is the
%Lebesgue number of~\eqref{eq:cover-jets}.
Consider a $( {\alpha,}\nu,\delta)$-horizontal $ss$-dimensional
$C^r$-disc $\mathcal{H}_0$ in $\mathcal{B}=\mathsf{R}\times B$.
Take the $k$-parametric constant family associated with
$\mathcal{H}_0$ given by
$$
\mathcal{H}=(\mathcal{H}_a)_a \quad \text{where \ \
$\mathcal{H}_a=\mathcal{H}_0$~for~all~$a\in \mathbb{I}^k$}.
$$

\begin{lem} \label{lem:disc-jets}
The set $\widehat{\mathcal{H}}$ in $J(\mathbb{R}^m)$ parameterized
by
$$
  %\widehat{H}_0: \overline{\widehat{\mathsf{R}}^{ss}}\to \overline{\widehat{\mathcal{B}}}
  %J(\mathbb{R}^m)
  %\quad
  \widehat{\mathcal{H}}(J(\xi))
  =J(\mathcal{H}\circ \xi)
  %=(\mathcal{H}_a(\xi_a),\partial_a^1\mathcal{H}_a(\xi_a),\dots,\partial_a^d\mathcal{H}_a(\xi_a))_{|_{\,a=0}}
  \quad \text{for $\xi=(\xi_a)_a\in C^d(\mathbb{I}^k,\mathbb{R}^{ss})$ with
$J(\xi) \in  \overline{\widehat{\mathsf{R}}_{ss}}$. }
$$
is a $( {\alpha,}\nu,\delta)$-horizontal
$\widehat{d}_{ss}$-dimensional $C^1$-disc in
$\widehat{\mathcal{B}}=\widehat{\mathsf{R}}\times \widehat{B}$ for
any   {$\alpha>0$ large enough} and $\nu>0$ small enough.
% $\nu>0$,  $\widehat{\mathcal{H}}$ is an
%almost-horizontal $\widehat{d}_{ss}$-dimensional $C^1$-disc in
%$\widehat{\mathcal{B}}=\widehat{\mathsf{R}}\times \widehat{B}$, where
%$$
%  %\widehat{H}_0: \overline{\widehat{\mathsf{R}}^{ss}}\to \overline{\widehat{\mathcal{B}}}
%  %J(\mathbb{R}^m)
%  %\quad
%  \widehat{\mathcal{H}}(J(\xi))=J(\mathcal{H}\circ
%  \xi)=(\mathcal{H}_a(\xi_a),\partial_a^i\mathcal{H}_a(\xi_a))_{|_{\,a=0}}
%  \quad \text{for $\xi=(\xi_a)_a\in C^d(\mathbb{I}^k,\mathbb{R}^{ss})$ with
%$J(\xi) \in  \overline{\widehat{\mathsf{R}}_{ss}}$. }
%$$
\end{lem}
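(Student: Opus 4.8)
The plan is to read the three defining properties of an $(\alpha,\nu,\delta)$-horizontal disc (Definition~\ref{def:almost-horizontal-disc}) directly off the jet prolongation of the given disc $\mathcal{H}_0$, in complete analogy with the treatment of the Grassmannian-induced disc in Lemma~\ref{lem-induced-folding-manifold}. Writing the $C^r$-disc as a graph $\mathcal{H}_0(\xi)=(\xi,g(\xi),h(\xi))\in\mathbb{R}^{ss}\times\mathbb{R}^u\times\mathbb{R}^c$ and using the splitting $J(\mathbb{R}^m)=J(\mathbb{R}^{ss})\times J(\mathbb{R}^u)\times J(\mathbb{R}^c)$ induced by $\mathbb{R}^m=\mathbb{R}^{ss}\times\mathbb{R}^u\times\mathbb{R}^c$, the constant family $\mathcal{H}=(\mathcal{H}_a)_a$ with $\mathcal{H}_a\equiv\mathcal{H}_0$ gives, for $\xi=(\xi_a)_a$,
$$
 \widehat{\mathcal{H}}(J(\xi))=\bigl(J(\xi),\,J(g\circ\xi),\,J(h\circ\xi)\bigr),
 \qquad J(\xi)\in\overline{\widehat{\mathsf{R}}_{ss}}.
$$
Thus the $J(\mathbb{R}^{ss})$-coordinate is the identity, while the unstable and central coordinates are the $d$-jet prolongations $\widehat{g}(J(\xi))=J(g\circ\xi)$ and $\widehat{h}(J(\xi))=J(h\circ\xi)$. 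Since $\overline{\widehat{\mathsf{R}}_{ss}}$ has dimension $\widehat{d}_{ss}$ and the first coordinate is the identity, this already exhibits $\widehat{\mathcal{H}}$ as an embedded disc of the right dimension; its regularity is $C^{r-d}\subseteq C^1$ because $r>d$ forces $g,h\in C^{d+1}$ and the $d$-jet prolongation of a $C^{d+1}$ map is $C^1$.

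Next I would compute $D\widehat{g}$ and $D\widehat{h}$ with the Fa\`a di Bruno formula for jets. Ordering the jet variables of $\xi$ by derivative order $\partial^0\xi,\dots,\partial^d\xi$, the component $\partial^i(h\circ\xi)$ depends only on $\partial^0\xi,\dots,\partial^i\xi$ and is affine in $\partial^i\xi$ with coefficient $Dh(\xi_0)$; hence $D\widehat{h}$ is block lower-triangular, with diagonal blocks $Dh(\xi_0)$ and off-diagonal blocks assembled from $D^2h,\dots,D^{d+1}h$ at $\xi_0$ multiplied by the higher jets $\partial^{\geq1}\xi$, and similarly for $D\widehat{g}$. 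Because $\mathcal{H}_0$ is a fixed $C^{d+1}$-disc over a compact domain, all these derivatives are uniformly bounded, so $\|D\widehat{g}\|_\infty<\infty$ and $\widehat{C}:=\|D\widehat{h}\|_\infty<\infty$. Property~\eqref{disc1} then holds by enlarging $\alpha$, and property~\eqref{disc3} holds by shrinking $\nu$ until $\widehat{C}\,\nu<\delta$; both are soft consequences of finiteness.

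The delicate property is~\eqref{disc2}. By~\eqref{eq:hatR} the domain is $\widehat{\mathsf{R}}_{ss}=(-2,2)^{ss}\times B_\rho(0)$, so $\xi_0\in[-2,2]^{ss}$ and $\|\partial^i\xi\|<\rho$ for every $i\geq1$. In the Fa\`a di Bruno expansion each term of $\partial^i(h\circ\xi)$ with $i\geq1$ carries at least one factor $\partial^j\xi$ with $j\geq1$, so $\|\partial^i(h\circ\xi)\|\leq K_i\,\rho$ with $K_i$ depending only on $\max_{1\le l\le d}\|D^l h\|_\infty$. Choosing the reference jet $\widehat{y}=(y,0,\dots,0)\in\widehat{B}$, where $y\in B$ is the point furnished by~\eqref{disc2} for $\mathcal{H}_0$, the zeroth coordinate contributes $d(y,h(\xi_0))\le\delta_0<\delta$ and the higher ones contribute only $O(\rho)$; taking the jet neighborhood $\widehat{\mathsf{R}}$ thin enough (that is, $\rho$ small, which the construction permits since the jet directions of $\mathbb{R}^{ss}$ are stable for $\widehat{F}$) keeps the total distance below $\delta$ uniformly in $J(\xi)$. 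The same smallness of $\rho$ keeps $J(g\circ\xi)$ and $J(h\circ\xi)$ inside $\widehat{\mathsf{R}}_u$ and $\widehat{B}$, so that the image lies in $\overline{\widehat{\mathcal{B}}}$, completing the verification. I expect this control of the higher-order central jet components by the thinness $\rho$ of the jet neighborhood to be the only genuinely delicate point, the regularity and properties~\eqref{disc1} and~\eqref{disc3} being immediate from the boundedness of the prolongation's derivatives together with the freedom to enlarge $\alpha$ and shrink $\nu$.
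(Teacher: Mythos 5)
Your proposal is correct and follows essentially the same route as the paper's proof: both exploit the constancy of the family $\mathcal{H}=(\mathcal{H}_a)_a$ so that $\widehat h$ and $\widehat g$ reduce to jet prolongations of $h_0$ and $g_0$, obtain conditions~\eqref{disc1} and~\eqref{disc3} from compactness of $\overline{\widehat{\mathsf{R}}_{ss}}$ together with the freedom to enlarge $\alpha$ and shrink $\nu$, and obtain condition~\eqref{disc2} by evaluating on the zero-jet slice $[-2,2]^{ss}\times\{0\}$ (where $\widehat h_0(J(\xi))=(h_0(x),0)$ is $\delta$-close to $\widehat y=(y,0)$) and then taking $\rho$ small. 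Your Fa\`a di Bruno computation merely makes quantitative the continuity argument the paper invokes at that last step.
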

\begin{proof}
According to Definition~\ref{def:almost-horizontal-disc}, we need
to show that $\widehat{h}=\mathscr{P}\circ \widehat{\mathcal{H}}$
is $\delta$-close in the $C^0$-topology to a constant function on
$\widehat{B}$ where $\mathscr{P}$ the canonical projection on the
central coordinate, i.e., onto $J(\mathbb{R}^c)$. Moreover, we
also need to show that $\widehat{h}$ is $C^1$-dominated by a
constant $C>0$ so that $C\nu < \delta$  {and
$\|D\widehat{g}\|_\infty \leq \alpha$ where $\widehat{g}$ is the
$J(\mathbb{R}^u)$-coordinate of $\widehat{\mathcal{H}}$}. Since
$\widehat{h}$  {and $\widehat{g}$ are} of class $C^{r-d}$ with
$r>d$, then $C=\|D\widehat{h}\|_{\infty} <\infty$
 {and $D=\|D\widehat{g}\|_\infty<\infty$} over the
closure of $\widehat{\mathsf{R}}_{ss}$. Thus taking $\nu>0$ small
enough  {and $\alpha>0$ large enough}, we can guarantee that $C\nu
<\delta$  {and $D\leq \alpha$}. So, we only need to prove that
there is a point $\widehat{y}\in \widehat{B}$ such that
$$
d(\widehat{h}(J(\xi) {)},\widehat{y})<\delta  \quad \text{for all
$J(\xi) \in \overline{\widehat{\mathsf{R}}_{ss}}$.}
$$
Since  $\mathcal{H}=(\mathcal{H}_a)_a$ is a constant family of
discs,
then~$\widehat{h}=\widehat{h}_0$~where~$h_0=\mathscr{P}\circ
\mathcal{H}_0$~and
$$
  %\widehat{H}_0: \overline{\widehat{\mathsf{R}}^{ss}}\to \overline{\widehat{\mathcal{B}}}
  %J(\mathbb{R}^m)
  %\quad
  \widehat{h}_0(J(\xi))=%J(h_0\circ \xi)=
  (h_0(\xi_a), {\partial_a^1h_0(\xi_a),\dots,\partial_a^d
  h_0(\xi_a)})_{|_{\,a=0}}
  \quad \text{for $\xi=(\xi_a)_a\in C^d(\mathbb{I}^k,\mathbb{R}^{ss})$ with
$J(\xi) \in  \overline{\widehat{\mathsf{R}}_{ss}}$. }
$$
%Here $\mathscr{P}$ denotes again the projection on the central coordinate, in this case onto $\mathbb{R}^c$.
Moreover, as $\mathcal{H}_0$ is a $(
{\alpha,}\nu,\delta)$-horizontal disc in $\mathcal{B}$, there is
$y\in B$ such that $d(h_0(x),y)<\delta$ for all $x \in
[-2,2]^{ss}$. Set $\widehat{y}=(y,0)\in \widehat{B}$. For any
$x\in [-2,2]^{ss}$, we consider $\xi=(\xi_a)_a$ given by $\xi_a=x$
for all $a\in \mathbb{I}^k$. Then $J(\xi)=(x,0)$ and
% {\partial^1\xi,\dots,\partial^d\xi})$
%with $\partial^i\xi=0$   {for all $i=1,\dots,d$}.
% {Moreover, using the rule change and the Leibnitz
%formula we have}
%$$
%  \partial^i_a h_0(\xi)_{|_{\,a=0}} = \sum_{j=0}^{i-1}
%  \binom{i-1}{j} \ \partial^j_a(Dh_0(\xi))_{|_{\,a=0}} \cdot
%  \partial^{i-j}\xi = 0 \quad \text{ {for all
%  $i=1,\dots,d$.}}
%$$
 {since $h_0(\xi_a)=h_0(x)$ for all $a\in
\mathbb{I}^k$ we have $\widehat{h}_0(J(\xi))=(h_0(x),0)$.}
Therefore $d(\widehat{h}(J(\xi)),\widehat{y})<\delta$ for all
$J(\xi) \in [-2,2]^{ss}\times \{0\}$. By continuity, and since $
{\rho}>0$ can be taken arbitrarily small, it follows that
$d(\widehat{h}(J(\xi)),\widehat{y})<\delta$ for all $J(\xi)$ in
the closure of $\widehat{\mathsf{R}}_{ss}=(-2,2)^{ss}\times B_{
{\rho}}(0)$. This completes the proof of the lemma.
\end{proof}

\begin{rem} If $\mathcal{H}_0$ is a horizontal $ss$-dimensional $C^r$-disc in
$\mathcal{B}$ then $\widehat{\mathcal{H}}$ is also a horizontal
$\widehat{d}_{ss}$-dimensional $C^1$-disc in
$\widehat{\mathcal{B}}$. Thus, in this case, we do not need a
strong contraction for the dynamics on the base. It is only
required the domination assumption $\nu <\lambda$.
\end{rem}

%By assumption this family induces an almost-horizontal
%$\widehat{d}_{ss}$-dimensional $C^1$-disc
%$\widehat{\mathcal{H}}=\widehat{\mathcal{H}}_0$ in $\widehat{\mathcal{B}}$.
Since being an almost-horizontal $C^1$-disc is an open property,
any small enough $C^{d,r}$-perturbation
$\mathcal{D}=(\mathcal{D}_a)_a$ of $\mathcal{H}=(\mathcal{H}_a)_a$
still provides an almost-horizontal $\widehat{d}_{ss}$-dimensional
$C^1$-disc $\widehat{\mathcal{D}}$ in $\widehat{\mathcal{B}}$
close to $\widehat{\mathcal{H}}$ given by
$$
  %\widehat{\mathcal{D}}: \overline{\widehat{\mathsf{R}}^{ss}} \longrightarrow  \overline{\widehat{\mathcal{B}}}, \quad
  \widehat{\mathcal{D}}(J(\xi))=J(\mathcal{D}\circ \xi)
  %=(\mathcal{D}_a(\xi_a),\partial^1_a\mathcal{D}_a(\xi_a),
  %\dots,\partial^d_a\mathcal{D}_a(\xi_a))_{|_{\,a=0}}
  \quad
  \text{for $\xi=(\xi_a)_a \in C^d(\mathbb{I}^k,\mathbb{R}^{ss})$ with
$J(\xi)\in \overline{\widehat{\mathsf{R}}^{ss}}$.}
$$
%where $\xi=(\xi_a)_a \in C^d(\mathbb{I}^k,\mathbb{R}^{ss})$ with $J(\xi)\in \overline{\widehat{\mathsf{R}}^{ss}}$.
In fact, taking $\xi_a \in [-2,2]^{ss}$ and
$z_a=\mathcal{D}_a(\xi_a)$ for all $a\in \mathbb{I}^k$, it is not
difficult to see that the image of this embedding is given by
$$
  \widehat{\mathcal{D}}=\{ \, J(z)\in J(\mathbb{R}^m):
  z=(z_a)\in C^d(\mathbb{I}^k,\mathbb{R}^m)
   \ \ \text{with} \ \ z_a\in \mathcal{D}_a \ \
   \text{for all $a\in \mathbb{I}^k$} \}\cap \overline{\widehat{\mathcal{B}}}.
$$
%Since $z_a \in \mathcal{D}_a$ for all $a\in \mathbb{I}^k$ then we
%have $\xi=(\xi_a)_a\in C^d(\mathbb{I}^k,[-2,2]^{ss})$ with such
%that $z_a=\mathcal{D}_a(\xi_a)$. Then $J(z)=J(\mathcal{D}\circ
%\xi).$
In this way, we take $\mathscr{D}=\mathscr{D}({\mathcal{H}})$, a
small enough $C^{d,r}$-neighborhood of $\mathcal{H}$.

\begin{rem} The superposition region $\mathscr{D}$ of an affine $cs$-parablender
 contains the open set of
\emph{almost-constant $k$-parameter $C^d$-families of
almost-horizontal $ss$-dimensional $C^r$-discs in $\mathcal{B}$.}
\end{rem}
% which induce an almost-horizonal $C^1$-discs in $\widehat{\mathcal{B}}$}.
%That is, a
%set of $k$-parameter $C^d$-families
%$\mathcal{D}=(\mathcal{D}_a)_a$ of $C^r$-embedding
%$ss$-dimensional discs $C^{d,r}$-close enough to a constant family
%$\mathcal{H}=(\mathcal{H}_a)_a$ of an almost-horizontal disc
%$\mathcal{H}_0$ in $\mathcal{B}$ so that $\widehat{\mathcal{D}}$ is
%still an almost-horizontal $C^1$-disc in $\widehat{\mathcal{B}}$.

\subsubsection{Parablenders from blenders in the jet space} \label{sec:parablender-from-blender} We will
get that $\Gamma=(\Gamma_a)_a$ is a $cs$-parablender of
$\Phi=(\Phi_a)_a$ as a consequence of the following general
result. %Recall that  $0<d<r$.

\begin{prop} \label{prop:parablender}
Let $\Gamma_0$ be a $cs$-blender of  {central dimension} $c$
 {and strong stable dimension
$d_{ss}=\mathrm{ind}^s(\Gamma_0)-c$} of a $C^r$-diffeomorphism
$f_0$ of a manifold~$\mathcal{M}$. Consider a $k$-parameter
$C^d$-family $f=(f_a)_a$ unfolding $f_0$ at $a=0$ such that the
induced map $\widehat{f}$ on the manifold of $(d,k)$-velocities
$J(\mathcal{M})=J^d_0(\mathbb{I}^k,\mathcal{M})$ over
$\mathcal{M}$,  {which is} given by
$$
  \widehat{f}(J(z))=J(f\circ z)=(f_a(z_a), {\partial^1_a f_a(z_a),\dots,\partial^d_a f_a(z_a)})_{|_{\,a=0}}
   \ \ \text{with} \ \ z=(z_a)_a \in
   C^d(\mathbb{I}^k,\mathcal{M}),
$$
has a $cs$-blender $\widehat{\Gamma}$  satisfying the following
assumptions:
\begin{enumerate}[itemsep=0.1cm]
\item $\widehat{\Gamma}$ projects on $\mathcal{M}$ onto $\Gamma_0$;
\item there is a $k$-parameter $C^d$-family $\mathcal{H}=(\mathcal{H}_a)_a$ of
$d_{ss}$-dimensional $C^r$-embedded discs $\mathcal{H}_a$ into
$\mathcal{M}$ so that $\widehat{\mathcal{H}}_0 \in
\widehat{\mathscr{D}}$, where
$\widehat{\mathcal{H}}_0$ is %a $C^1$-embedding disc into $J(\mathcal{M})$
contained in
$$
  \widehat{\mathcal{H}}=\{ \, J(z)\in J(\mathcal{M}):
  z=(z_a)\in C^d(\mathbb{I}^k,\mathcal{M})
   \ \ \text{with} \ \ z_a\in \mathcal{H}_a \ \
   \text{for all $a\in \mathbb{I}^k$} \}
$$
and $\widehat{\mathscr{D}}$ is  {a} superposition region of the
blender $\widehat{\Gamma}$.
\end{enumerate}
Then $\Gamma=(\Gamma_a)_a$ is a
 {$C^d$-}$cs$-parablender  {at $a=0$}
of  {central dimension $c$} for $f$, where $\Gamma_a$ is the
continuation of $\Gamma_0$ for~$f_a$.
\end{prop}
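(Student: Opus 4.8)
The plan is to verify Definition~\ref{def:parablender} for $\Gamma=(\Gamma_a)_a$ directly, driving everything by the jet blender $\widehat{\Gamma}$ through the correspondence between the induced map $\widehat{f}$ and the unfolding $f=(f_a)_a$. Condition (1) holds by hypothesis, since $\Gamma_a$ is the continuation of $\Gamma_0$ for $f_a$. For the open set $\mathscr{D}$ of families of discs, I would take a $C^{d,r}$-neighborhood of the family $\mathcal{H}$ provided by hypothesis (2), small enough that for every $\mathcal{D}=(\mathcal{D}_a)_a$ in it the associated jet disc $\widehat{\mathcal{D}}_0$ (the $C^{d,r}$-perturbation of $\widehat{\mathcal{H}}_0$) still lies in the superposition region $\widehat{\mathscr{D}}$ of $\widehat{\Gamma}$. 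This is possible because $\widehat{\mathscr{D}}$ is open and the lifting $\mathcal{D}\mapsto\widehat{\mathcal{D}}_0$ is continuous into the $C^1$-topology on jet discs; in particular $\mathscr{D}$ is open and nonempty.

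Next I would fix the neighborhood $\mathscr{U}$. Since $d<r$, the map $\widehat{f}$ is at least $C^1$ by Remark~\ref{rem:parablender-regularidad}, so the blender $\widehat{\Gamma}$ comes equipped with a $C^1$-robustness neighborhood $\widehat{\mathscr{U}}$ of $\widehat{f}$ as in Definition~\ref{def:blender}. A small $C^{d,r}$-perturbation $g=(g_a)_a$ of $f$ induces a small $C^1$-perturbation $\widehat{g}$ of $\widehat{f}$, so I can choose a $C^{d,r}$-neighborhood $\mathscr{U}$ of $f$ whose image under $g\mapsto\widehat{g}$ is contained in $\widehat{\mathscr{U}}$. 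Then, for every $g\in\mathscr{U}$ and every $\mathcal{D}\in\mathscr{D}$, the continuation $\widehat{\Gamma}_{\widehat{g}}$ is again a $cs$-blender and the defining blender property yields a jet $J(w)$ in the intersection $W^u_{loc}(\widehat{\Gamma}_{\widehat{g}})\cap\widehat{\mathcal{D}}_0$.

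It remains to translate this single jet intersection into the degenerate unfolding required by Definition~\ref{def:parablender}. On the disc side, $J(w)\in\widehat{\mathcal{D}}_0\subset\widehat{\mathcal{D}}$ produces a $C^d$-family $y=(y_a)_a$ with $y_a\in\mathcal{D}_a$ and $J(y)=J(w)$. On the unstable side, I would show that $W^u_{loc}(\widehat{\Gamma}_{\widehat{g}})$ consists precisely of jets $J(x)$ of $C^d$-families $x=(x_a)_a$ whose members satisfy $x_a\in W^u_{loc}(z_a)$ for a continuation $z_a\in\Gamma_{a,g}$ of a point of $\Gamma_0$; this is the analogue of the lamination/projection property recorded in~\eqref{eq:implication}. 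Concretely, since $\widehat{\Gamma}$ projects onto $\Gamma_0$, a jet remains in the jet neighborhood under backward iterates of $\widehat{g}$ exactly when its base family tracks the $C^d$-continuation of $W^u_{loc}(\Gamma_{a,g})$; choosing such a representative $x=(x_a)_a$ together with $z=(z_a)_a$ gives the desired families. With $J(x)=J(w)=J(y)$, the jet formalism of~\S\ref{sec:unfoding} yields $d(x_a,y_a)=o(\|a\|^{d})$ at $a=0$, which is exactly a $C^d$-degenerate unfolding of a tangency between $W^u_{loc}(\Gamma_g)$ and $\mathcal{D}$.

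The routine points are the openness of $\mathscr{D}$ and the continuity of $g\mapsto\widehat{g}$ from the $C^{d,r}$- to the $C^1$-topology. I expect the main obstacle to be the identification in the third paragraph: proving that the local unstable manifold of $\widehat{\Gamma}_{\widehat{g}}$ in the jet space is exactly the set of jets of $C^d$-families tracking $W^u_{loc}(\Gamma_{a,g})$, and that a prescribed intersection jet can be realized by genuine $C^d$-representative families $x=(x_a)_a$ and $z=(z_a)_a$. This will rely on the hyperbolicity of $\widehat{\Gamma}$ together with the $C^d$-continuity of the continuations $\Gamma_{a,g}$ and of their local unstable manifolds in the parameter $a$.
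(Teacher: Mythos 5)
Your proposal follows essentially the same route as the paper's proof: a $C^{d,r}$-neighborhood of $\mathcal{H}$ as the open set of disc families, a $C^{d,r}$-neighborhood $\mathscr{U}$ of $f$ chosen so that the induced maps $\widehat{g}$ land in the $C^1$-robustness neighborhood of the jet blender, and the translation of a single intersection point $\widehat{q}\in W^u_{loc}(\widehat{\Gamma}_g)\cap\widehat{\mathcal{D}}_0$ into families $x,y,z$ with $J(x)=J(y)$. The identification of $W^u_{loc}(\widehat{\Gamma}_{\widehat g})$ with jets of families tracking $W^u_{loc}(\Gamma_{a,g})$, which you flag as the main obstacle, is exactly the step the paper also uses (and states with comparable brevity), so your argument is correct and matches the paper.
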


\begin{proof}
First of all, we will provide the open set of embedded discs. To
do this, similarly as in \S\ref{sec:discos-afin-jet-blender}, we
take a small $C^{d,r}$-neighborhood
$\mathscr{D}=\mathscr{D}(\mathcal{H})$ of the family
$\mathcal{H}$, so that any family $\mathcal{D}$ in $\mathscr{D}$
still gives a disc $\widehat{\mathcal{D}}_0 \in
\widehat{\mathscr{D}}$ contained in $\widehat{\mathcal{D}}$.

Next, we will construct the open set of families of
diffeomorphisms. Consider the neighborhood $\widehat{\mathscr{U}}$
of the induced map $\widehat{f}$ coming from the definition of the
blender. Take the $C^{d,r}$-neighborhood $\mathscr{U}$ of the
family $f=(f_a)_a$, so that for every $g=(g_a)_a \in \mathscr{U}$
its induced map $\widehat{g}$ on $J(\mathcal{M})$ belongs to
$\widehat{\mathscr{U}}$.

Now, we will prove the existence of a {degenerate} unfolding
 {at $a=0$ of a tangency} between any family of
$ss$-dimensional discs $\mathcal{D}=(\mathcal{D}_a)_a \in
\mathscr{D}$ and the unstable manifold of
$\Gamma_g=(\Gamma_{a,g})_a$ for any $g=(g_a)_a \in \mathscr{U}$,
where $\Gamma_{a,g}$ is the continuation of $\Gamma_a$ for $g_a$.
Since $\widehat{\mathcal{D}}$ contains a disc
$\widehat{\mathcal{D}}_0$ in the superposition region
$\widehat{\mathscr{D}}$ of the $cs$-blender $\widehat{\Gamma}$ of
$\widehat{f}$, then
$$W^u_{loc}(\widehat{\Gamma}_g)\cap
\widehat{\mathcal{D}}_0\not =\emptyset$$ where
$\widehat{\Gamma}_g$ is the continuation of $\widehat{\Gamma}$ for
the induced map $\widehat{g}$. It is clear that
$\mathscr{P}_m(\widehat{\Gamma}_{g})=\Gamma_{0,g}$ and that
$\widehat{\Gamma}_g$ is  a hyperbolic set of $\widehat{g}$.
%and the induced map $\widehat{g}$ analyzes the kinematic of the families
%of points $z=(z_a)_{a}$ on $\mathcal{M}$  under the action of the
%maps $g_a$ for parameters close to $a=0$.
If $J(z)\in \widehat{\Gamma}_g$, where $z=(z_a)_a \in
C^d(\mathbb{I}^k,\mathcal{M})$, then $z_0 \in \Gamma_{0,g}$ and
the point $z_a$  must be the continuation in $\Gamma_{a,g}$ of
$z_0$ for $g_a$. Similarly,
$\mathscr{P}_m(W^u_{loc}(J(z)))=W^u_{loc}(z_0)$ and if
$\widehat{x}\in W^u_{loc}(J(z))$ then
$$
\text{$x=(x_a)_a\in C^d(\mathbb{I}^k,\mathcal{M})$ \  so that \
$x_a\in W^u_{loc}(z_a)$ for all $a\in \mathbb{I}^k$ and
$J(x)=\widehat{x}$.}
$$

In summary, we can find a point $\widehat{q} \in
W^{u}_{loc}(\widehat{\Gamma}_g) \cap \widehat{\mathcal{D}}_0$.
Since $\widehat{q}$ belongs to the local unstable manifold of
$\widehat{\Gamma}_{g}$ there are functions $x=(x_a)_a,
z=(z_a)_a\in C^d(\mathbb{I}^k,\mathcal{M})$ such that
$$
\text{$x_a\in W^u_{loc}(z_a)$ with $z_a\in\Gamma_{a,g}$ for all
$a\in \mathbb{I}^k$ and $\widehat{q}=J(x)$.}
$$
On the other hand, since $\widehat{q}\in \widehat{\mathcal{D}}$,
$$\text{there is $y=(y_a)_a \in
C^d(\mathbb{I}^k,\mathcal{M})$ such that $y_a\in \mathcal{D}_a$
for all $a\in \mathbb{I}^k$ and $\widehat{q}=J(y)$.}$$ Thus
$J(x)=J(y)$. This concludes that $\mathcal{W}=(W^u_{loc}(z_a))_a$
and $\mathcal{D}=(\mathcal{D}_a)_a$  {has a tangency} at $a=0$
which unfolds $C^d$-{degenerately}. Therefore
$\Gamma=(\Gamma_a)_a$ is a  {$C^d$-}$cs$-parablender
 {at $a=0$} of  {central dimension $c
\geq 1$} for $f=(f_a)_a$ and we complete the proof of the
proposition.
\end{proof}

%\begin{rem}
%\label{rem:prop-parablender} Proposition~\ref{prop:parablender} is
%also true if we assume that $\widehat{\Gamma}$ is $C^0$-stable and
%consider the $C^0$-open set of discs coming from the
%$C^0$-stability instead of the superposition
%$\widehat{\mathscr{D}}^{ss}$ of $C^1$-discs. In this case, one gets a
%$C^d$-parablender.
%\end{rem}

\subsubsection{Proof of Theorem~\ref{thm-parablender}} Take the
family of $cs$-blenders $\Gamma=(\Gamma_a)_a$ of
 {central dimension} $c>0$ of the particular family
of locally defined affine skew-products $\Phi=(\Phi_a)_a$
constructed in~\S\ref{sec:affine-blenders-family}. From
\S\ref{sec:blender-jets}, we get a $cs$-blender $\widehat{\Gamma}$
for the induced map $\widehat{\Phi}$ on $J(\mathbb{R}^m)$ which
projects in $\mathbb{R}^m$ onto $\Gamma_0$. In
\S\ref{sec:discos-afin-jet-blender} it was obtained that any
$k$-parameter constant family of horizontal discs induced a
$\widehat{d}_{ss}$-dimensional $C^1$-disc into the superposition
domain $\widehat{\mathcal{B}}$ of $\widehat{\Gamma}$. Thus, this
disc belongs to the superposition region of the induced blender.
Hence, according to Proposition~\ref{prop:parablender}, $\Gamma$
is a  {$C^d$}-$cs$-parablender of  {central dimension} $c>0$ at
$a=0$. Finally, by Remark~\ref{rem:any-parameter} and
reparameterizing if necessary, $\Phi=(\Phi_a)_a$ is $k$-parametric
$C^d$-family of $C^r$-diffeomorphisms having a
 {$C^d$-}$cs$-parablender $\Gamma=(\Gamma_a)_a$ of
 {central dimension $c\geq 1$} at any parameter.
This
completes the proof.% of Theorem~\ref{thm-parablender}.

\section{Robust {degenerate} unfolding of heterodimensional cycles}
\label{sec:cycles} Now we will prove Theorem~\ref{thmA}. We will
consider a $C^d$-family $f=(f_a)_a$ of $C^r$-diffeomor\-phisms of
a manifold $\mathcal{M}$ parameterized by $a\in\mathbb{I}^k$ with
a  {$C^d$-}$cs$-parablender $\Gamma=(\Gamma_a)_a$ of codimension
$c\geq 1$ at any parameter. For simplicity, we will assume that
$\Gamma$ is the family of affine blenders constructed to prove
Theorem~\ref{thm-parablender}. We will assume that $f_0$ has a
heterodimensional cycle of co-index $c\geq 1$ associated with
$\Gamma_0$ and another hyperbolic periodic point $P_0$. We suppose
that $W^s(P_0)$ contains a $ss$-dimensional horizontal disc
$\mathcal{H}_0$ in the superposition domain $\mathcal{B}$ of the
$cs$-blender $\Gamma_0$ of $f_0$. Moreover, as the construction is
local, we ask that $W^s(P_a)$ contains the same disc
$\mathcal{H}_0$ for all $a\in\mathbb{I}^k$ where $P_a$ denotes the
continuation of $P_0$ for $f_a$. Hence the constant family of
discs $\mathcal{H}=(\mathcal{H}_a)$ where
$\mathcal{H}_a=\mathcal{H}_0$ for all $a\in \mathbb{I}^k$ belongs
to the open set $\mathscr{D}$ of families of embedded discs
associated with the  {$C^d$-}$cs$-parablender $\Gamma$. Thus, for
every $C^{d,r}$-close enough family $g=(g_a)_a$ of $f=(f_a)_a$ the
family of stable manifolds $W^s(P_g)=(W^s(P_{a,g}))_a$ of the
continuation $P_{a,g}$ of $P_a$ contains a family of discs
$\mathcal{D}=(D_a)_a \in \mathscr{D}$. Therefore,
 {$W^u_{loc}(\Gamma_g)$ and  $W^s(P_g)$} has a
 {tangency} at $a=0$ which unfolds
$C^d$-{degenerately}. In fact, since $\Gamma$ is a
 {$C^d$-}$cs$-parablender at any parameter, the same
argument also works for any parameter $a=a_0$. This concludes the
proof of the theorem.

\section{Robust {degenerate} unfoldings of homoclinic tangencies}
\label{sec:final} In this section we prove Theorem~\ref{thmC}. We
begin by mentioning a few words about the strategy of the proof.
Recall that to prove Theorem~\ref{thmA} we first show that any
manifold $\mathcal{M}$ of dimension at least $3$ admits a family
$f=(f_a)_a$ of diffeomorphisms $f_a$ of $\mathcal{M}$ having a
parablender at any parameter (see Theorem~\ref{thm-parablender}).
Now, to prove Theorem~\ref{thmC}, we will proceed similarly by
showing first the following result:

\begin{thm}
\label{thm-paratangencies} Any manifold $\mathcal{M}$ of dimension
$m> c+u$ admits a $k$-parameter $C^d$-family $f=(f_a)_a$ of
\mbox{$C^r$-diffeomorphisms} with { $0<d<r-1$} such that the
$k$-parameter induced $C^d$-family $f^{{G}}=(f^{{G}}_a)^{}_a$ of
$C^{r-1}$-diffeomorphisms on the $u$-th Grassmannian bundle of
$\mathcal{M}$,
$$
   f^{{G}}_a : G_u(\mathcal{M})\longrightarrow G_u(\mathcal{M}), \qquad
   f^{{G}}_a(x,E)=(f_a(x),Df_a(x)E)
$$
has a $C^d$-parablender $\Gamma^{{G}}=(\Gamma^{{G}}_a)^{}_a$ of
 {central dimension $c\geq 1$} at any parameter.
\end{thm}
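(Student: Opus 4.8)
The plan is to run the jet-space parablender machinery of Theorem~\ref{thm-parablender} not on the original family, but on the family it induces on the Grassmannian bundle, exploiting the observation from~\S\ref{sec:affine-blenders-grasmannian} that such an induced family is again a family of affine blenders over an enlarged hyperbolic base. As in the previous sections it suffices to build everything in local coordinates $\mathbb{R}^m=\mathbb{R}^n\times\mathbb{R}^c$ with $n=ss+u$ and then transport the construction into an arbitrary manifold $\mathcal{M}$. The hypothesis $m>c+u$ is exactly the possibility of choosing $ss=m-c-u\geq 1$, which (together with $u\geq1$ and $c\geq1$) guarantees both a nontrivial strong stable bundle and a base of dimension $n\geq 2$ carrying a horseshoe.

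First I would take the $k$-parameter family $\Phi=(\Phi_a)_a$ of affine skew-products $\Phi_a=F\ltimes(\phi_{1,a},\dots,\phi_{\kappa,a})$ built in~\S\ref{sec:affine-blenders-family}, with the fiber contractions chosen as in Example~\ref{exa} so that $D\phi_{\ell,a}=\lambda\,\mathrm{Id}$ is independent of $a$. This is the crucial choice: it forces $D\Phi_a=D\Phi_\ell$ to be independent of $a$ on each $\mathsf{R}_\ell\times D$, so that the plane $E^u=\{0^{ss}\}\times\mathbb{R}^u\times\{0^c\}$ is an attracting fixed point of a Grassmannian action common to all parameters. Following~\S\ref{sec:affine-blenders-grasmannian}, on the neighborhood $\mathcal{C}^G$ of $E^u$ the induced family $\Phi^G=(\Phi^G_a)_a$ becomes
$$
   \Phi^G_a=F^G\ltimes(\phi_{1,a},\dots,\phi_{\kappa,a}),\qquad
   F^G=F\ltimes(D\Phi_1,\dots,D\Phi_\kappa),
$$
with the hyperbolic base $F^G$ independent of $a$ and carrying a horseshoe $\Lambda^G=\Lambda\times\{E^u\}$ of stable index $d^G_{ss}=ss+u(m-u)$; each $\Phi^G_a$ has the $cs$-blender $\Gamma^G_a=\Gamma_a\times\{E^u\}$ of central dimension $c$, where $\Gamma_a$ is the affine blender of $\Phi_a$.

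Next I would check that $\Phi^G$ satisfies the hypotheses of Proposition~\ref{prop:parablender}. The decisive point is that the only parameter-dependence of $\Phi^G$ sits in the fiber maps $\phi_{\ell,a}$, which are unchanged by the Grassmannian lift; hence the induced jet dynamics on $J(G_u(\mathbb{R}^m))$ splits, exactly as in~\S\ref{sec:blender-jets}, as a skew-product $\widehat{\Phi^G}=\widehat{F^G}\ltimes(\widehat{\phi}_1,\dots,\widehat{\phi}_\kappa)$. Although $F^G$ is not affine on the chart $\mathcal{C}^G$, its jet lift $\widehat{F^G}$ has a block-triangular linearization with diagonal blocks $DF^G$, so $\widehat{F^G}$ is again a hyperbolic horseshoe with the rates of $F^G$, of stable index $d^G_{ss}\binom{d+k}{d}$, and with the same cone-width $\alpha$. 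The fiber jet maps $\widehat{\phi}_\ell$ and the covering property~\eqref{eq:cover-jets} are literally those from the proof of Theorem~\ref{thm-parablender}; since the domination $\nu<\lambda$ also persists, Theorem~\ref{thmBKR} produces a $cs$-blender $\widehat{\Gamma^G}$ of central dimension $c\binom{d+k}{d}$ projecting onto $\Gamma^G_0$, and the constant family of horizontal discs of~\S\ref{sec:discos-afin-jet-blender} (as in Lemma~\ref{lem:disc-jets}) supplies a family $\mathcal{H}^G=(\mathcal{H}^G_a)_a$ whose induced jet disc lies in its superposition region. Proposition~\ref{prop:parablender} then gives that $\Gamma^G=(\Gamma^G_a)_a$ is a $C^d$-$cs$-parablender of central dimension $c$ at $a=0$, and the argument of Remark~\ref{rem:any-parameter}, valid verbatim because the base $F^G$ is $a$-independent, upgrades this to any parameter.

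The regularity bound is where the genuinely new bookkeeping enters, and it is the main obstacle. Because $\Phi_a$ is $C^r$, its Grassmannian lift $\Phi^G_a$ is only $C^{r-1}$, so $\Phi^G$ is a $C^{d,r-1}$-family and, by Remark~\ref{rem:parablender-regularidad}, its jet-induced map $\widehat{\Phi^G}$ is of class $C^{(r-1)-d}$. Theorem~\ref{thmBKR} requires this to be at least $C^1$, which forces $(r-1)-d\geq 1$, i.e.\ $0<d<r-1$ as claimed. The real work is thus to carry the entire blender architecture --- hyperbolicity of the base, its domination of the fiber dynamics, and the survival of the superposition region --- through \emph{two} successive lifts, first to the Grassmannian bundle and then to the jet space, while tracking the loss of derivatives. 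Since the parameter-carrying fiber contractions $\phi_{\ell,a}$ are untouched by the Grassmannian step, the domination $\nu<\lambda$ and the covering~\eqref{eq:cover-jets} transfer without change, and this compatibility of the two lifts is exactly what makes the argument close.
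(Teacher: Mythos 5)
Your proposal is correct and follows the paper's own route almost step for step: the same family of affine skew-products with $a$-independent differential $D\Phi$, the same observation that the Grassmannian lift is again a skew-product $F^G\ltimes(\phi_{1,a},\dots,\phi_{\kappa,a})$ over an $a$-independent base, the same second lift to $J(G_u(\mathbb{R}^m))$, the same triangular-linearization argument showing that $\widehat{D\Phi}$ at $J(E^u)$ has the eigenvalues of $P$ (dominated by $\beta\nu<\lambda$) so that Theorem~\ref{thmBKR} applies, and the same conclusion via Proposition~\ref{prop:parablender} and Remark~\ref{rem:any-parameter}. Your accounting of the two losses of regularity ($C^r\to C^{r-1}$ under the Grassmannian lift, then $C^{r-1}\to C^{r-1-d}$ under the jet lift, forcing $d<r-1$) matches Remark~\ref{rem:paratangencias-regularidad}. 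The one place you genuinely diverge is the choice of the superposition family of discs. Note first that for the parablender $\Gamma^G$ these must be $d^G_{ss}$-dimensional discs in $\mathcal{B}^G=\mathcal{B}\times\mathcal{C}^G$ with $d^G_{ss}=ss+u(m-u)$, not the $ss$-dimensional discs of~\S\ref{sec:discos-afin-jet-blender} that your citation literally points to; the analogue of Lemma~\ref{lem:disc-jets} one level up does work for constant families of horizontal discs in $\mathcal{B}^G$, so your argument proves the stated theorem. The paper, however, deliberately takes the discs to be $\mathcal{S}^G_a\cap\overline{\mathcal{B}^G}$ for a constant family of folding manifolds $\mathcal{S}_a\subset\mathbb{R}^m$ (Lemma~\ref{lem-induced-folding-manifold} and \S\ref{sec:disc-folding-manifold-jets}): this costs an extra lemma but guarantees that the superposition region contains discs coming from genuine $(ss+c)$-dimensional submanifolds of $\mathcal{M}$ (ultimately pieces of stable manifolds), which is exactly what Theorem~\ref{thm-BR19} and hence Theorem~\ref{thmC} need in order to convert the degenerate unfolding in the Grassmannian bundle into a degenerate unfolding of a homoclinic tangency of codimension $c$ downstairs. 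With arbitrary horizontal discs in $\mathcal{B}^G$ that last step would not be available, so if you intend your version to feed into Theorem~\ref{thmC} you should replace your disc family by the folding-manifold construction.
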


As in the proof of Theorem~\ref{thm-parablender}, we will obtain a
parablender for the $k$-parameter family
$f^{{G}}=(f_a^{{G}})^{}_a$ for the induced dynamics by
constructing a blender with respect to the induced dynamics
$\widehat{f}^{{G}}$ on the manifold of $(c,k)$-velocities over
$G_u(\mathcal{M})$, i.e., on the jet space
$J^d_0(\mathbb{I}^k,G_u(\mathcal{M}))$.

\begin{rem}
\label{rem:paratangencias-regularidad} Notice that the map
$\widehat{f}^{{G}}$ is of class $C^{r-1-d}$.
%Then to get that
%$\widehat{f}^u$ is at least a homeomorphisms we need to assume
%$0<d<r$.
%In order to construct a
%blender for $\widehat{f^u}$ we need to assume $0<d<r-1$ to get
%that $\widehat{f^u}$ is at least $C^1$ and after that consider
%$C^0$. In particular $r\geq 3$.
\end{rem}

In what follows, we fix { $0<d<r-1$}. As in the previous section,
we will provide the proof of Theorem~\ref{thm-paratangencies}
using the local coordinates in $\mathcal{M}$. Thus, as usual, we
will work in $\mathbb{R}^m=\mathbb{R}^{ss}\times\mathbb{R}^u\times
\mathbb{R}^c$ with $u,c\geq 1$ and $n=ss+u$.

We also recall some notation
from~\S\ref{sec:affine-blenders-grasmannian}:
\begin{gather*}
 \mathsf{R}=\mathsf{R}_1\cup\dots\cup \mathsf{R}_\kappa \ \ \
 \text{and} \ \ \ \mathsf{R}^{{G}} = \mathsf{R}\times \mathcal{C}^{{G}} =
 \mathsf{R}^{{G}}_1\cup \dots \cup \mathsf{R}^{{G}}_\kappa \ \ \text{with} \ \
 \mathsf{R}_i^{{G}}=\mathsf{R}_i\times \mathcal{C}^{{G}}  \\
 \mathcal{U}=\mathsf{R}\times D = (\mathsf{R}_1\times D)\cup\dots\cup
 (\mathsf{R}_\kappa\times D)  \ \ \ \text{and} \ \ \
\mathcal{U}^{{G}}=\mathcal{U}\times \mathcal{C}^{{G}}.
%\text{where} \ \ \mathsf{R}\subset \mathbb{R}^n, \ \ D=(-2,2)^c \
%\ \text{and} \ \ \mathcal{C}^u \subset G(u,m).
%=(\mathsf{R}_1\times D\times \mathcal{C}^u)\cup\dots\cup \mathsf{R}_\kappa\times D\times \mathcal{C}^u),
\end{gather*}
Sometimes, when no confusion arises, we write $\mathcal{U}^{{G}}$
by a change of coordinates as
$$
   \mathcal{U}^{{G}}=\mathsf{R}^{{G}}\times D = (\mathsf{R}^{{G}}_1\times D)\cup\dots\cup
 (\mathsf{R}^{{G}}_\kappa\times D).
$$
\subsection{A parablender on the manifold of velocities over the Grassmanian manifold}
\label{sec:parablender-grasmmannian}
%In order to prove Theorem~\ref{thmC}
We will start considering the $k$-parameter $C^{d}$-family of
locally defined affine $C^{r}$ skew-product maps
$$
\Phi_a=F\ltimes(\phi_{1,a},\dots,\phi_{\kappa,a})  \quad \text{on
\ \ $\mathcal{U}=(\mathsf{R}_1\times D)\cup\dots\cup
(\mathsf{R}_\kappa\times D)$}
$$
introduced in~\S\ref{sec:affine-blenders-family}. For simplicity,
we assume that  there are $0<\nu<\lambda<\beta<1$ and diagonal
linear maps  $S: \mathbb{R}^{ss}\to \mathbb{R}^{ss}$, $U:
\mathbb{R}^{u} \to \mathbb{R}^u$ and
$T:\mathbb{R}^c\to\mathbb{R}^c$ such that
$$
 DF(x)=
\begin{pmatrix}
    S & 0  \\
    0 & U
\end{pmatrix}
\quad  \text{for all \ \ $ x\in \mathsf{R}$ \ \ where \ \ $\|S
\|,\ \|U^{-1}\| < \nu$}
$$
and
$$
  \text{$D\phi_{i,a}(y)=T$ \ \ for all \  $y\in \mathbb{R}^c$, \
$a\in\mathbb{I}^k$  \ and \  $i=1,\dots,\kappa$  \ \ where \ \
$\lambda <m(T)\leq \|T\|<\beta$.}
$$
Under theses assumptions, we get that $D\Phi_a(x,y)$ is the same
linear map $D\Phi$ for all $(x,y)\in \mathsf{R}\times D$ which has
$E^u_a=E^u$ with $E^u=\{0^{ss}\}\times \mathbb{R}^u \times
\{0^c\}$ as a fixed point and $\mathcal{C}^{{G}}$ as a
neighborhood of attraction for all $a\in \mathbb{I}^k$. Thus,
following \S\ref{sec:affine-blenders-grasmannian}, the induced
$C^{r}$-diffeomorphism $\Phi^{{G}}_a$ of $\Phi_a$ on the
Grassmannian manifold $G_u(\mathbb{R}^m)$ is given by
$$
\Phi_a^{{G}} = F^{{G}} \ltimes (\phi_{1,a},\dots,\phi_{\kappa,a})
\quad
    \text{on \  \
    $\mathcal{U}^{{G}}=(\mathsf{R}^{{G}}_1 \times D)\cup\dots\cup(\mathsf{R}^{{G}}_\kappa\times
    D$)}
$$
where $F^{{G}}=F \times D\Phi$ on
$\mathsf{R}^{{G}}=\mathsf{R}\times \mathcal{C}^{{G}}$. Moreover,
for each $a\in \mathbb{I}^k$ we have a $cs$-blender
$\Gamma^{{G}}_a=\Gamma_a \times \{E^u\}$ of
 {central dimension $c\geq 1$} where $\Gamma_a$ is
the $cs$-blender of $\Phi_a$. Now, we will show that the family
$\Gamma^{{G}}=(\Gamma^{{G}}_a)^{}_a$ is a $cs$-parablender of
 {central dimension} $c$ at $a=0$ for
${\Phi}^{{G}}=({\Phi}^{{G}}_a)^{}_a$.

%As in Remark~\ref{rem:any-parameter} we can extend the result for
%any close parameter $a_0$ and therefore this will complete the
%proof of Theorem~\ref{thm-paratangencies}.

To prove this, we  {will} work with the induced $C^1$-map on
$J(G_u(\mathbb{R}^m))\eqdef J^d_0(\mathbb{I}^k,G_u(\mathbb{R}^m))$
by the family $\Phi^{{G}}=({\Phi}^{{G}}_a)^{}_a$ given by
%that for convenience of notation we denote as
$$
  \widehat{\Phi}^{{G}}: J(G_u(\mathbb{R}^m)) \to
  J(G_u(\mathbb{R}^m)), \quad
  \widehat{\Phi}^{{G}}(J(z))=J(\Phi^{{G}}\circ
z)
%=(\Phi^{{G}}_a(z_a),\partial^i_a\Phi^{{G}}_a(z_a))_{|_{\, a=0}}
\ \ \text{where $z=(z_a)_a \in
C^d(\mathbb{I}^k,G_u(\mathbb{R}^m))$.}
$$
According to Proposition~\ref{prop:parablender} to prove that
$\Gamma^{{G}}=(\Gamma^{{G}}_a)^{}_a$ is a
 {$C^d$-}$cs$-parablender for
$\Phi^{{G}}=(\Phi^{{G}}_a)^{}_a$ at $a=0$ we need to show the
following. First, we must prove that $\widehat{\Phi}^{{G}}$ has a
$cs$-blender $\widehat{\Gamma}^{{G}}$ which projects onto
$\Gamma^{{G}}_{0}$ and after provide a particular $C^d$-family
$\mathcal{H}^{{G}}=(\mathcal{H}^{{G}}_a)_a$ of $C^{r-1}$-discs
which induce a disc $\widehat{\mathcal{H}}^{{G}}$ in the open set
of $C^1$-discs.  {This will be done in the two next sections. }
%coming from the $C^0$-stability of this blender.
% the superposition region of this blender.

\subsubsection{Blender}
\label{sec:blender-grasma-jets} Using local coordinates
(c.f.~\cite{M80,KK00}) in the manifold of $(d,k)$-velocities over
$G_u(\mathbb{R}^m)=\mathbb{R}^m\times G(u,m)$ we can identify
$J(\mathcal{U}^{{G}}) = J(\mathsf{R}^{{G}}) \times J(D)$.  Thus,
it is not difficult to see that $\widehat{\Phi}^{{G}}$ restricted
to $J(\mathcal{U}^{{G}})$ can be written as a skew-product map
$$
\widehat{\Phi}^{{G}}=\widehat{F}^{{G}} \ltimes
(\widehat{\phi}_1,\dots,\widehat{\phi}_\kappa) \quad
    \text{on \  \
$J(\mathcal{U}^{{G}})=J(\mathsf{R}^{{G}}_1) \times
   J(D) \cup \dots \cup J(\mathsf{R}^{{G}}_\kappa)
   \times
   J(D)$}
$$
where
%$$
%  \widehat{F^u}(J_0^d(z))=J_0^d(F^u\circ
%   z)=(F^u_a(z_a),\partial^i_aF^u_a(z_a))_{|_{\, a=0}} \ \
%   \text{}
%$$
%with $z=(z_a)_a \in C^d(\mathbb{I}^c,\mathbb{R}^n\times G(u,m))$
%with $z_a \in \mathsf{R}^u$ for all $a\in\mathbb{I}^c$
$\widehat{F}^{{G}}$ is the induced map on $J(\mathsf{R}^{{G}})$ by
the map $F^{{G}}$ and $\widehat{\phi}_\ell$ are the induced maps
on $J(D)$ by the family $\phi_\ell=(\phi_{\ell,a})_a$ for
$\ell=1,\dots,\kappa$. Then, according to~\eqref{eq:cover-jets}
and Theorem~\ref{thmBKR} we only need to
prove that the base dynamics %$\widehat{F^u}$
of $\widehat{\Phi}^{{G}}$ has a horseshoe which dominates the fiber dynamics. %$\widehat{\phi}_\ell$ for $\ell=1,\dots,k$.
To do this, first we identify $J(\mathsf{R}^{{G}})=
  J(\mathsf{R}) \times J(\mathcal{C}^{{G}})$.  In this way,
  we write the base dynamics of $\widehat{\Phi}^{{G}}$ as a
direct product map
$$
\widehat{F}^{{G}}=\widehat{F} \times \widehat{D\Phi} \quad
\text{on \ \ $ J(\mathsf{R}) \times J(\mathcal{C}^{{G}})$}
$$
%To do this we consider %in local coordinates
%$$
%   \widehat{F^u}(J_0^d(z))=J_0^d(F^u\circ
%   z)=(F^u_a(z_a),\partial^i_aF^u_a(z_a))_{|_{\, a=0}}
%$$
%where $z=(z_a)_a \in C^d(\mathbb{I}^c,\mathbb{R}^n\times G(u,m))$
%with $z_a \in \mathsf{R}^u$ for all $a\in\mathbb{I}^c$. We can
%write
%$$
%   \widehat{F^u}=\widehat{F} \ltimes (\widehat{D\Phi}_{1},\dots,
%   \widehat{D\Phi}_k) \ \ \text{on} \ \ \mathsf{R}^u=
%   (R_1\times \mathcal{C}^u) \cup \dots \cup (R_k\times \mathcal{C}^u)
%$$
 where $\widehat F $ acts on
$J(\mathsf{R})$ by means of
%$$
%\widehat{F}(J^d_0(x))=(F(x_a), \ DF(x_a)\,
%\partial^i_a
%   x_a)_{|\,a=0} \ \
%   \text{with $x=(x_a)_a \in C^d(\mathbb{I}^c,\mathbb{R}^n)$ such that $x_0 \in \mathsf{R}$}
%$$
%and
$$
%\widehat{F}:J_0^d(\mathbb{I}^c,\mathbb{R}^n)_{\mathsf{R}}\to
%J_0^d(\mathbb{I}^c,\mathbb{R}^n)_{\mathsf{R}}, \qquad
\widehat{F}(J(x))=%J_0^d(F\circ x)=
(F(x_a), {\partial^1_a F(x_a),\dots,\partial^d_a F(x_a)})_{|_{\,
a=0}}
%$$
\ \ \text{with $x=(x_a)_a \in C^d(\mathbb{I}^k,\mathbb{R}^n)$ such
that $x_0 \in \mathsf{R}$}
%for all $a\in \mathbb{I}^c$
$$
and $\widehat{D\Phi}$ acts on $J(\mathcal{C}^{{G}})$ defined as
$$
%\widehat{D\Phi}: J_0^d(\mathbb{I}^c,G(u,m))_{\mathcal{C}^u}\to
%J_0^d(\mathbb{I}^c,G(u,m))_{\mathcal{C}^u} \quad
\widehat{D\Phi}(J(E))=%J_0^d(D\Phi \circ E)=
(D\Phi \cdot E_a, \  {\partial_a^1(D\Phi \cdot
  E_a),\dots,\partial_a^d(D\Phi \cdot
  E_a)})_{|_{\, a=0}}
$$
with  $E=(E_a)_a \in C^d(\mathbb{I}^k,G(u,m))$ and $E_0 \in
\mathcal{C}^{{G}}$.
%is defined as
%$$
%\widehat{D\Phi}(J^d_0(E))=J_0^d(D\Phi \circ E)=(D\Phi \cdot E_a, \
%\partial_a^i(D\Phi \cdot
%  E_a))_{|_{\, a=0}}
%$$
%for $E=(E_a)_a \in C^d(\mathbb{I}^c,G(u,m))$ with $E_0 \in \mathcal{C}^u$.
As in \S\ref{sec:blender-jets}, using that $F$ is an affine map
and is independent of $a$, we have that
$$
\widehat{F}(J(x))=(F(x_a), {
DF(x_a)\,\partial^1_ax_a,\dots,DF(x_a)\,\partial^d_ax_a})_{|_{\,a=0}}
$$
with $x=(x_a)_a \in C^d(\mathbb{I}^k,\mathbb{R}^n)$ such that
$x_0\in \mathsf{R}$. From here we get that $\widehat{F}$ has a
horseshoe $\widehat{\Lambda}=\Lambda \times \{0\}$ as an invariant
set. Moreover, the eigenvalues of the linear part of $\widehat{F}$
are the same as of  $DF$ at $\mathsf{R}$ and thus, as in
\S\ref{sec:blender-jets}, they dominate the fiber dynamics.

On the other hand, it is not difficult to see that
$\widehat{D\Phi}$ has the fixed point $J(E^u)$ where
$E^u=(E^u_a)^{}_a \in C^d(\mathbb{I}^k,G(u,m))$ is given by
$E^u_a=\{0^{ss}\}\times \mathbb{R}^u\times\{0^c\}$ for all $a\in
\mathbb{I}^k$. Hence
$$
   \widehat{\Lambda}^{{G}}=\widehat{\Lambda} \times \{J(E^u)\}
   \subset J(\mathsf{R}) \times
J(\mathcal{C}^{{G}})
$$
is a horseshoe for $\widehat{F}^{{G}}$  {with~stable~index %of $\widehat\Lambda$ is
$$
\widehat{d}^G_{ss}\eqdef\mathrm{ind}^s(\widehat{\Lambda}^G)= \dim
J(\mathbb{R}^{ss})+\dim J(G(u,m)).
$$
Also, analogously with~\S\ref{sec:blender-jets}
and~\S\ref{sec:affine-blenders-grasmannian}, the width of the
stable cone-field of $\widehat{F}^G$ on $J(\mathsf{R}^G)$
coincides with the width $\alpha$ of the stable cone-field of $F$ on
$\mathsf{R}$.}

Now we need to prove that $\widehat{F}^{{G}}$ restricted to
$\widehat{\Lambda}^{{G}}$ dominates $\widehat{\phi}_1, \dots,
\widehat{\phi}_\kappa$. Since, $\widehat{F}$ dominates the fiber
dynamics, it suffices to show that $\widehat{D\Phi}$ at $J(E^u)$
also dominates $\widehat{\phi}_1, \dots, \widehat{\phi}_\kappa$.
In local coordinates around $E^u$ we can write
\begin{equation} \label{eq:DPhi-E}
  D\Phi \cdot E \equiv P \, e + O(e^2), \qquad  E\in \mathcal{C}^{{G}}, \ \
   e \in \mathbb{R}^{d_u} \ \ \text{and} \ \  E\equiv e  \ \
   \text{with}  \ \ E^u \equiv 0
\end{equation}
being $P$ a diagonal $(d_u\times d_u)$-matrix whose eigenvalues
are dominated by $\beta \nu< 1$ and $d_u=\dim G(u,m)=u(m-u)$.
Similarly, we can identify $J(E) \equiv (e, {\partial^1
e,\dots,\partial^d e}) \in \mathbb{R}^{d_u}\times
\mathcal{J}^d(c,d_u)$ and take as a representative of $J(E)$ the
function $E=(E_a)_a$ given by
\begin{equation} \label{eq:Ea}
  E_a \equiv e+
\partial e \cdot a + \frac{1}{2!} \ \partial^2 e \cdot a^2 + \dots +
\frac{1}{d!} \ \partial^d e \cdot a^d.
\end{equation}
%which is written in total polarized notation.
%Hence,
%taking into account that $P$ does not depends on $a$,
%\begin{equation}
%\label{eq1}
%  D\Phi \cdot E_a =  P  e  +  \sum_{j=1}^d
%  \frac{1}{j!}  \ P \ \partial^j e \cdot a^j + O(\|E_a\|^2)
%\end{equation}
Substituting~\eqref{eq:Ea} into~\eqref{eq:DPhi-E}, in local
coordinates we have that
\begin{equation}
\label{eq2}
\partial_a^i(D\Phi \cdot
  E_a)_{|_{\, a=0}} \equiv   \ P \, \partial^i e + O(2) \quad
   {\text{for $i=1,\dots,d$}}
\end{equation}
where $O(2)$ is a function that envolves the products of
$\partial^s e \cdot \partial^te$ with $s+t=2$. In local
coordinates $J(E^u)\equiv(e,\partial^1 e,\dots,\partial^de)=0$ is
a fixed point of $\widehat{D\Phi}$. Moreover, from~\eqref{eq2} the
linear part at this point is given by a triangular matrix whose
diagonal elements are the eigenvalues of $P$. Since these
eigenvalues are dominated by $\beta\nu <\nu < \lambda$, then
$\widehat{D\Phi}$ dominates the fiber dynamics. This concludes the
proof of the existence of a $cs$-blender $\widehat{\Gamma}^{{G}}$
of $\widehat{\Phi}^{{G}}$ projecting on
$\Gamma^{{G}}_0=\Gamma_0\times \{E^u\}$.
 {Moreover, the family of
$(\alpha,\nu,\delta)$-horizontal $C^1$-discs in
$\widehat{\mathcal{B}}^{{G}}=\widehat{\mathsf{R}}^{{G}}\times
\widehat{B}$  is a superposition region of $\widehat{\Gamma}^G$,
where $0<\delta<\hat{L}\lambda/2$}.

%proves that $\widehat{F^u}$ dominates the fiber dynamics
%$\widehat{\phi}_\ell$

%$J^d_0(E)=(e,\partial^i e) \in \mathbb{R}^{d_u}\times
%\mathcal{J}^d(c,d_u)$ being $d_u=\dim G(u,m)$ and take as a
%representative of $E=(E_a)_a$ the function
%$$
%  E_a = e+
%\partial e \cdot a + \frac{1}{2} \partial^2 e \cdot a^2 + \dots +
%\frac{1}{d!} \partial^d e \cdot a^d
%$$
%in total polarized notation. Hence, in local coordinates,
%\begin{equation}
%\label{eq1}
%  D\Phi_{\ell,a} \cdot E_a =  \lambda_a \cdot e  +  \sum_{j=1}^d
%  \frac{\lambda_a}{j!} \ \partial^j e \cdot a^j + O(\|E_a\|^2)
%\end{equation}
%where $\lambda_a$ is a matrix with eigenvalues dominated by $\beta
%\nu<1$. From here we get that
%\begin{equation}
%\label{eq2}
%\partial_a^i(D\Phi_{\ell,a} \cdot
%  E_a)_{|_{\, a=0}} = (\partial^i_a \lambda_a)_{|_{\, a=0}} \cdot e +
%  \sum_{j\leq i} \frac{1}{j!} \frac{i!}{(i-j)!j!} \ (\partial^{i-j}_a \lambda_a)_{|_{\,
%  a=0}} \cdot \partial^j e + O(\|e\|^2).
%\end{equation} Hence, $e=0$, $\partial^i e=0$ is a fixed point of
%$\widehat{D\Phi}_\ell$ in local coordiantes which corresponds to
%the unstable bundle $E^u$ because $E^u_a=E^u$ for all $a\in
%\mathbb{I}^c$. Moreover, from~\eqref{eq1} and \eqref{eq2} we get
%that the linear part at this point is given by a triangular matrix
%whose diagonal elements are the eigenvalues of $\lambda_a$. This
%proves that $\widehat{F^u}$ dominates the fiber dynamics
%$\widehat{\phi}_\ell$.

\subsubsection{Discs on the manifold of velocities induced by
folding manifolds} \label{sec:disc-folding-manifold-jets} Let
$\widehat{\mathcal{B}}^{{G}}=\widehat{\mathsf{R}}^{{G}}\times
\widehat{B}$ be the superposition domain of the blender
$\widehat{\Gamma}^{{G}}$ where $\widehat{\mathsf{R}}^{{G}}$ is a
neighborhood on $J(\mathsf{R}^{{G}})=J(\mathsf{R})\times
J(\mathcal{C}^{{G}})$ of $\widehat{\Lambda}^{{G}}$. Similar as
in~\S\ref{sec:discos-afin-jet-blender}, since $\mathsf{R}\subset
\mathbb{R}^n=\mathbb{R}^{ss}\times \mathbb{R}^u$ and
$\mathcal{C}^{{G}} \subset G(u,m)$ we can take
$$
\widehat{\mathsf{R}}^{{G}}=\widehat{\mathsf{R}}_{ss} \times
\widehat{\mathsf{R}}_{u} \times \widehat{\mathsf{R}}_{G} \quad
\text{with \quad  $\widehat{\mathsf{R}}_{ss} \subset
J(\mathbb{R}^{ss})$, \ \ $\widehat{\mathsf{R}}_{u} \subset
J(\mathbb{R}^u)$ \ \ and \ \ $\widehat{\mathsf{R}}_{G} \subset
J(G(u,m))$.}
$$
In fact, we have that ${\widehat{\mathsf{R}}_{G}}$ can be taken as
an arbitrarily small neighborhood in $J(G(u,m))$ of
$J(E^u)\equiv(e, {\partial^1e,\dots,\partial^de})=0$ and
$\widehat{\mathsf{R}}^{ss}=(-2,2)^{ss} \times B_{ {\rho}}(0)$.
Again here denotes  {$B_\rho(0)$ denotes the subset of
$\mathcal{J}^d(k,ss)$ so that the symmetric $i$-linear maps have
all norm less than $\rho$.}

Now, fix a $( {\alpha,}\nu,\delta)$-folding $C^r$-manifold
$\mathcal{S}_0$ with respect to
$\mathcal{B}^{{G}}=\mathcal{B}\times \mathcal{C}^{{G}}$, where
$\mathcal{B}=\mathsf{R}\times B$ is the superposition domain of
the blender $\Gamma_0$ of $\Phi_0$. Consider the $k$-parametric
constant family of $( {\alpha,}\nu,\delta)$-folding
$C^r$-manifolds associated with $\mathcal{S}_0$ given by
$$
  \mathcal{S}=(\mathcal{S}_a)_a \quad \text{where \ \
   $\mathcal{S}_a=\mathcal{S}_0$ for all $a\in \mathbb{I}^k$}.
$$
According to Lemma~\ref{lem-induced-folding-manifold}, the set
$$
\mathcal{H}^{{G}}_0=\mathcal{S}^{{G}}_0 \cap
\overline{\mathcal{B}^{{G}}} \quad \text{with} \quad
  \mathcal{S}^{{G}}_0=\{ (z,E): z\in \mathcal{S}_0, \  E \in G(u,m)
    \ \
    \text{and} \  \dim E \cap T_z\mathcal{S}_0=u  \}
$$
is an almost-horizonal $d^{{G}}_{ss}$-dimensional $C^{r-1}$-disc
in $\mathcal{B}^{{G}}$ where $d^{{G}}_{ss}=ss+u(m-u)$. Hence, the
constant family of folding $C^r$-manifolds
$\mathcal{S}=(\mathcal{S}_a)_a$ induces a constant family
$\mathcal{H}^{{G}}=(\mathcal{H}^{{G}}_a)^{}_a$ of $C^r$-discs in
$\mathcal{B}^{{G}}$ given by
$\mathcal{H}^{{G}}_a=\mathcal{H}^{{G}}_0$ for all $a\in
\mathbb{I}^k$. By means of a similar argument as in
Lemma~\ref{lem:disc-jets} we obtain the following:

\begin{lem}
The set $\widehat{\mathcal{H}}^{{G}}$ in $J(G_u(\mathbb{R}^m))$
parameterized by
$$
  \widehat{\mathcal{H}}^{{G}}(J(\xi))=J(\mathcal{H}^{{G}}_a\circ
  \xi)=(\mathcal{H}^{{G}}_a(\xi_a), {
  \partial_a^1\mathcal{H}^{{G}}_a(\xi_a),\dots,\partial_a^d\mathcal{H}^{{G}}_a(\xi_a)})_{|_{\,a=0}}
%\quad
%\text{for $\xi=(\xi_a)_a\in C^d(\mathbb{R}^k,\mathbb{R}^{ss}\times
%G(u,m))$}
$$
for $\xi=(\xi_a)_a\in C^d(\mathbb{R}^k,\mathbb{R}^{ss}\times
G(u,m))$ with $J(\xi)$ belongs to the closure of $
\widehat{\mathsf{R}}_{ss}\times \widehat{\mathsf{R}}_{G}$, is  a
$( {\alpha,}\nu,\delta)$-horizontal
$\widehat{d}_{ss}^{{G}}$-dimensional $C^1$-disc in
$\widehat{\mathcal{B}}^{{G}}=\widehat{\mathsf{R}}^{{G}}\times
\widehat{B}$ for any  {$\alpha>0$ large enough} and $\nu>0$ small
enough where
$$\widehat{d}^{{G}}_{ss} \eqdef
\mathrm{ind}^{s}(\widehat{\Lambda}^{{G}}) = \dim
J(\mathbb{R}^{ss}) + \dim J(G(u,m)).$$
%
%For any small enough $\nu>0$  we have that
%$$
%\text{$\widehat{\mathcal{H}}^{{G}}$ is an
%almost-horizontal
%$\widehat{d}_{ss}^{{G}}$\,-\,dimensional $C^1$-disc
%in \
%$\widehat{\mathcal{B}}^{{G}}=\widehat{\mathsf{R}}^{{G}}\times
%\widehat{B}$}
%$$
%where $ \widehat{d}^{{G}}_{ss}=
%\mathrm{ind}^{s}(\widehat{\Lambda}^{{G}}) = \dim
%J(\mathbb{R}^{ss}) + \dim J(G(u,m))$ and
%$$
%  \widehat{\mathcal{H}}^{{G}}(J(\xi))=J(\mathcal{H}^{{G}}_a\circ
%  \xi)=(\mathcal{H}^{{G}}_a(\xi_a),
%  \partial_a^i\mathcal{H}^{{G}}_a(\xi_a))_{|_{\,a=0}} \quad
%\text{for $\xi=(\xi_a)_a\in C^d(\mathbb{R}^k,\mathbb{R}^{ss}\times
%G(u,m))$}
%$$
%%for $\xi=(\xi_a)_a\in C^d(\mathbb{R}^k,\mathbb{R}^{ss}\times G(u,m))$
%with $J(\xi)$ belongs to the closure of $
%\widehat{\mathsf{R}}_{ss}\times \widehat{\mathsf{R}}_{G}$.
\end{lem}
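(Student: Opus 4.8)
The plan is to replay the proof of Lemma~\ref{lem:disc-jets} almost verbatim, now carrying the extra Grassmannian jet directions $J(G(u,m))$ along. By Lemma~\ref{lem-induced-folding-manifold}, $\mathcal{H}^{G}_0$ is an $(\alpha,\nu,\delta)$-horizontal $d^{G}_{ss}$-dimensional $C^{r-1}$-disc in $\mathcal{B}^{G}=\mathcal{B}\times\mathcal{C}^{G}$, parametrized by $(x,E)\in[-2,2]^{ss}\times\overline{\mathcal{C}^{G}}$, with unstable coordinate $g^{G}(x,E)\in\mathbb{R}^u$ and central coordinate $h^{G}(x,E)\in\mathbb{R}^c$. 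Writing $\widehat{g}^{G}$ and $\widehat{h}^{G}$ for the $J(\mathbb{R}^u)$- and $J(\mathbb{R}^c)$-coordinates of $\widehat{\mathcal{H}}^{G}$, I would verify the three conditions of Definition~\ref{def:almost-horizontal-disc} for $\widehat{\mathcal{H}}^{G}$ in $\widehat{\mathcal{B}}^{G}=\widehat{\mathsf{R}}^{G}\times\widehat{B}$, where the parametrizing set is the closure of $\widehat{\mathsf{R}}_{ss}\times\widehat{\mathsf{R}}_{G}$, of dimension exactly $\widehat{d}^{G}_{ss}=\dim J(\mathbb{R}^{ss})+\dim J(G(u,m))$.

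First, the two derivative conditions \eqref{disc1} and \eqref{disc3} are handled by regularity together with a choice of constants. Since $0<d<r-1$, taking $d$-jets of the $C^{r-1}$-disc $\mathcal{H}^{G}_0$ produces an object of class $C^{r-1-d}$ with $r-1>d$; in particular $\widehat{\mathcal{H}}^{G}$ is a $C^1$-disc and both $\|D\widehat{g}^{G}\|_\infty$ and $C\eqdef\|D\widehat{h}^{G}\|_\infty$ are finite over the compact closure of $\widehat{\mathsf{R}}_{ss}\times\widehat{\mathsf{R}}_{G}$. Hence \eqref{disc3} holds by choosing $\nu>0$ small enough that $C\nu<\delta$, and \eqref{disc1} holds by choosing $\alpha>0$ large enough that $\|D\widehat{g}^{G}\|_\infty\le\alpha$. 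This reduces the proof to the $C^0$-closeness condition \eqref{disc2} for the central coordinate $\widehat{h}^{G}$.

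For \eqref{disc2} I would exploit the constant-family structure exactly as in Lemma~\ref{lem:disc-jets}. Because $\mathcal{H}^{G}=(\mathcal{H}^{G}_a)_a$ is constant in $a$, the central coordinate reduces to $\widehat{h}^{G}=\widehat{h}^{G}_0$ with $\widehat{h}^{G}_0(J(\xi))=(h^{G}_0(\xi_a),\partial^1_a h^{G}_0(\xi_a),\dots,\partial^d_a h^{G}_0(\xi_a))_{|_{\,a=0}}$ for $\xi=(\xi_a)_a\in C^d(\mathbb{I}^k,\mathbb{R}^{ss}\times G(u,m))$. Since $\mathcal{H}^{G}_0$ is horizontal there is $y\in B$ with $d(h^{G}_0(x,E),y)<\delta$ for all $(x,E)\in[-2,2]^{ss}\times\overline{\mathcal{C}^{G}}$; I set $\widehat{y}=(y,0)\in\widehat{B}$. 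For a fixed base point $(x,E)$ I take the constant representative $\xi_a\equiv(x,E)$, so that every $a$-derivative vanishes, $J(\xi)=((x,E),0)$, and $\widehat{h}^{G}_0(J(\xi))=(h^{G}_0(x,E),0)$; thus $d(\widehat{h}^{G}_0(J(\xi)),\widehat{y})<\delta$ on the zero-section of the domain.

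It then remains to propagate this strict inequality off the zero-section. Since $\widehat{\mathsf{R}}_{ss}=(-2,2)^{ss}\times B_\rho(0)$ and $\widehat{\mathsf{R}}_{G}$ may both be taken as arbitrarily small neighborhoods of the zero jets (of $\mathbb{R}^{ss}$ and of $G(u,m)$ respectively), continuity of $\widehat{h}^{G}$ together with compactness extends $d(\widehat{h}^{G}(J(\xi)),\widehat{y})<\delta$ from the zero-section to the whole closure of $\widehat{\mathsf{R}}_{ss}\times\widehat{\mathsf{R}}_{G}$, which is precisely \eqref{disc2}. The one point demanding care, and the only genuine departure from Lemma~\ref{lem:disc-jets}, is that the base parameter now ranges over $\mathbb{R}^{ss}\times G(u,m)$ rather than $\mathbb{R}^{ss}$ alone, so the Grassmannian jet block $J(G(u,m))$ must be tracked throughout; however, it enters only as additional strong-stable directions on which $h^{G}_0$ is already $\delta$-controlled by Lemma~\ref{lem-induced-folding-manifold}, and so it poses no real obstacle.
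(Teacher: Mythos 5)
Your proof is correct and follows essentially the same route as the paper's: finiteness of $\|D\widehat{g}^{G}\|_\infty$ and $\|D\widehat{h}^{G}\|_\infty$ over the compact parameter domain handles the two derivative conditions by taking $\alpha$ large and $\nu$ small, and the constant-family reduction with constant representatives $\xi_a\equiv(x,E)$ followed by a continuity/shrinking argument gives the $C^0$-closeness condition. The only (harmless) difference is that you establish the $\delta$-bound on the full zero-section $[-2,2]^{ss}\times\overline{\mathcal{C}^{G}}\times\{0\}$ using Lemma~\ref{lem-induced-folding-manifold}, whereas the paper verifies it only over $[-2,2]^{ss}\times\{E^u\}\times\{0\}$ and then shrinks the whole neighborhood $\widehat{\mathsf{R}}_{G}$ of $J(E^u)$; both versions close the argument.
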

\begin{proof}
Since $d<r-1$, it is straight forward that
$\widehat{\mathcal{H}}^{{G}}$ is a
$\widehat{d}_{ss}^{{G}}$-dimensional $C^1$-disc in
$\widehat{\mathcal{B}}^{{G}}$. Let $\widehat{h}^{{G}}$
 {and $\widehat{g}\ ^G$ be, respectively, the
$J(\mathbb{R}^c)$-coordinate and $J(\mathbb{R}^u)$-coordinate of
$\widehat{\mathcal{H}}^{{G}}$ which correspond with the central
and unstable coordinates of the disc.}  In order to prove that
$\widehat{\mathcal{H}}^{{G}}$ is a $(
{\alpha,}\nu,\delta)$-horizontal disc, notice that
$$
C=\|D\widehat{h}^{{G}}\|_{\infty} <\infty \quad
 {\text{and} \quad D=\|D\widehat{g}\ ^G\|_\infty
<\infty} \quad  \text{over the closure of
$\widehat{\mathsf{R}}_{ss}\times \widehat{\mathsf{R}}_G$.}
$$
Hence, by taking  {$\alpha>0$ large enough and} $\nu>0$ small
enough we can always guarantee that $C\nu <\delta$
 {and $D\leq \alpha$}. Thus, we only need to show
that there is a point $\widehat{y}\in \widehat{B}$ such that
$$
d(\widehat{h}^{{G}}(J(\xi),\widehat{y})<\delta \quad \text{for all
$J(\xi^{{G}})=(J(\xi),J(E)) \in
\overline{\widehat{\mathsf{R}}_{ss}}\times
\overline{\widehat{\mathsf{R}}_G}$}.
%\subset \big([-2,2]^{ss}\times B_R(0)\big)\times J(\mathcal{C}^u_\alpha)$.}
$$
Since $\mathcal{H}^{{G}}=(\mathcal{H}^{{G}}_a)^{}_a$ is a constant
family of discs
then~$\widehat{h}^{{G}}=\widehat{h}^{{G}}_0$~where~$h^{{G}}_0=\mathscr{P}\circ
\mathcal{H}^{{G}}_0$~and
$$
  %\widehat{H}_0: \overline{\widehat{\mathsf{R}}^{ss}}\to \overline{\widehat{\mathcal{B}}}
  %J(\mathbb{R}^m)
  %\quad
  \widehat{h}^{{G}}_0(J(\xi^{{G}}))=J(h^{{G}}_0\circ
  \xi^{{G}})
  %=(h^{{G}}_0(\xi^{{G}}_a),\partial_a^ih^{{G}}_0(\xi^{{G}}_a))_{|_{\,a=0}}
  \quad \text{for \ \ $\xi^{{G}}=(\xi^{{G}}_a)_a\in C^d(\mathbb{R}^k,\mathbb{R}^{ss}\times
  G(u,m))$}
$$
with $J(\xi^{{G}})$ belonging to the closure of
$\widehat{\mathsf{R}}_{ss}\times \widehat{\mathsf{R}}_G$. The same
computation as in Lemma~\ref{lem:disc-jets} proves that
$$
\widehat{h}^{{G}}_0(J(\xi^{{G}}))=(h^{{G}}_0(\xi^{{G}}_0),0) \in
J(\mathbb{R}^c) \quad \text{for all
$J(\xi^{{G}})%=(\xi^{{G}}_0,\partial^i\xi^{{G}})
\in
([-2,2]^{ss}\times \{E^u\}) \times \{0\}$.}
$$
Hence $d(\widehat{h}^{{G}}(J(\xi^{{G}})),\widehat{y})<\delta$ for
all $J(\xi^{{G}}) \in ([-2,2]^{ss}\times \{E^u\})\times \{0\}$,
where $\widehat{y}=(y,0)\in \widehat{B}$ and $y\in B$ comes from
the definition of the folding $C^r$-manifold. By continuity and
since the neighborhood $\widehat{\mathsf{R}}_G$ of
$J(E^u)=(E^u,0)$ can be taken arbitrarily small, it follows
$$
d(\widehat{h}(J(\xi^{{G}})),\widehat{y})<\delta \quad \text{for
all \ \ $J(\xi^{{G}})=(J(\xi),J(E)) \in
\overline{\widehat{\mathsf{R}}_{ss}} \times
\overline{\widehat{\mathsf{R}}_G}$.}
$$
This completes the proof of the lemma.
\end{proof}

%for $\xi=(\xi_a)_a\in C^d(\mathbb{R}^k,\mathbb{R}^{ss})$ with
%$J(\xi)$ belonging to the closure of $\widehat{\mathsf{R}}^{ss}$.

\begin{rem}
Let $\mathcal{S}_0$ be the $( {\alpha,}\nu,\delta)$-folding
$C^r$-manifold with respect to
$\mathcal{B}^{{G}}=\mathcal{B}\times \mathcal{C}^{{G}}$ introduced
in Example~\ref{exap:dobra}. Proposition~\ref{prop2:appendix} in
Appendix~\ref{appendix} proves that for any $0<\nu<\delta$ the
constant family $\mathcal{S}=(\mathcal{S}_a)_a$ of $(
{\alpha,}\nu,\delta)$-folding $C^r$-manifolds
 induces a $( {\alpha,}\nu,\delta)$-horizontal
$C^1$-disc $\widehat{\mathcal{H}}^{{G}}$ in
$\widehat{\mathcal{B}}^{{G}}$ from the constant family of
$C^{r-1}$-discs $\mathcal{H}^{{G}}=(\mathcal{H}^{{G}}_a)^{}_a$
given by
$\mathcal{H}^{{G}}_a=\mathcal{H}_0^{{G}}=\mathcal{S}^{{G}}_0\cap\overline{\mathcal{B}^{{G}}}$.
\end{rem}

The previous lemma implies that $\widehat{\mathcal{H}}^{{G}}$
belongs to  {a} superposition region of the
blender~$\widehat{\Gamma}^{{G}}$.
%In particular, $\widehat{\mathcal{H}}^u$ belongs to the $C^0$-open set of
%discs of the blender.
This completes the proof of the particular $C^d$-family
$\mathcal{H}^{{G}}=(\mathcal{H}^{{G}}_a)_a$ of $C^{r-1}$-discs.

\subsubsection{Proof of Theorem~\ref{thm-paratangencies}}
The proof will follow from Proposition~\ref{prop:parablender} and
using a similar construction as in the proof of
Theorem~\ref{thmD}. Indeed, we take a $k$-parameter family of
$cs$-blenders $\Gamma=(\Gamma_a)_a$ of  {central dimension $c\geq
1$} for a $C^d$-family $\Phi=(\Phi_a)_a$ of
$C^{r}$-diffeomorphisms of $\mathcal{M}$ locally defined as affine
skew-product maps given at the beginning
of~\S\ref{sec:parablender-grasmmannian}. As we showed
in~\S\ref{sec:blender-grasma-jets}, these maps provide a family
$\Gamma^{{G}}=(\Gamma^{{G}}_a)^{}_a$ of $cs$-blenders of
 {central dimension $c\geq 1$} for the induced
dynamics $\Phi^{{G}}_a$ on $G_u(\mathcal{M})$ and as well as a
$cs$-blender $\widehat{\Gamma}^{{G}}$ for the map
$\widehat{\Phi}^{{G}}$ on $J(G_u(\mathcal{M}))$. Similarly, as in
the proof of Theorem~\ref{thmD}, we take a $C^d$-family
 {$\mathcal{S}=(\mathcal{S}_a)_a$ of $C^2$-robust
%$P=(P_a)_a$ of periodic points $P_a \in \Gamma_a$ of $\Phi_a$ such
%that the global stable manifold contains a
folding $C^r$-manifolds $\mathcal{S}_a$ with respect to the
superposition domain $\mathcal{B}^{{G}}$ of $\Phi^{{G}}$ in the
sense of Remark~\ref{rem:rob-dobra}.} Moreover, we assume that the
$C^d$-family $\mathcal{S}$ induces a $C^1$-disc in the
superposition region of $\widehat{\Gamma}^{{G}}$.  This was done
in~\S\ref{sec:disc-folding-manifold-jets} by taking a constant
family of folding manifolds. Then, according to
Proposition~\ref{prop:parablender}
 we have that
$\Gamma^{{G}}=(\Gamma^{{G}}_a)^{}_a$ is a
 {$C^d$}-parablender  {at $a=0$} of
 {central dimension $c\ge 1$} of
$\Phi^{{G}}=(\Phi^{{G}}_a)^{}_a$ at $a=0$. As in
Remark~\ref{rem:any-parameter}, we can extend the result for any
parameter $a_0$ close to $a=0$. This completes the proof of
Theorem~\ref{thm-paratangencies}.

\subsection{Proof of Theorem~\ref{thmC}}  {The following} result
is,  {basically}, a consequence of
Theorem~\ref{thm-paratangencies}.

\begin{thm} \label{thm-BR19}
For any $0<d<r-1$ and $k\geq 1$, there exists a $C^d$-family
$f=(f_a)_a$ of locally defined $C^r$-diffeomorphisms of
$\mathcal{M}$ having a family of $cs$-blenders
$\Gamma=(\Gamma_a)_a$ with unstable dimension $u\geq 1$
%with splitting
%$$
%   T_{\Gamma_a}\mathcal{M}=E^{ss}\oplus E^c \oplus E^u \quad
%   \text{with $ss=\dim E^{ss}$, $c=\dim E^c$ and $u=\dim E^u$}
%$$
and a family of folding manifolds $\mathcal{S}=(\mathcal{S}_a)_a$
%of dimension $ss+c$
satisfying the following:

For any $a_0\in \mathbb{I}^k$, any family $g=(g_a)_a$ close enough
to $f$ in the $C^{d,r}$-topology and any $C^{d,r}$-perturbation
$\mathcal{L}=(\mathcal{L}_a)_a$ of $\mathcal{S}$ there exists
$z=(z_a)_a\in C^d(\mathbb{I}^k,\mathcal{M})$ such that
 {
\begin{enumerate}
\item $z_a\in\Gamma_{g,a}$, where $\Gamma_{g,a}$ denotes the
continuation for $g_a$ of the blender $\Gamma_a$,
\item the family of local unstable manifolds
$\mathcal{W}=(W^u_{loc}(z_a;g_a))_a$ and $\mathcal{L}$ have a
tangency
 of dimension $u$ at $a=a_0$ which unfolds $C^d$-degenerately.
\end{enumerate}
}
%
%there exist a family of $cs$-blenders $\Gamma=(\Gamma_a)_a$ of a
%$k$-parameter $C^d$-family $\Phi=(\Phi_a)_a$ of
%$C^r$-diffeomorphisms of a manifold of dimension $m\geq 3$ locally
%defined and a family of folding manifolds $S=(S_a)_a$ such that
%any family $f=(f_a)_a$ close enough to $\Phi$ in the
%$C^{d,r}$-topology is a $C^d$-degenerate unfolding at any
%parameter $a=a_0$ of a tangency between any folding manifold
%$S'=(S'_a)_a$ close enough to $S$ and the local unstable manifold
%of some point in $\Gamma_{a_0}$.
\end{thm}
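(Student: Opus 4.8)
The plan is to assemble this statement directly from Theorem~\ref{thm-paratangencies} together with the folding-manifold machinery of \S\ref{sec:disc-folding-manifold-jets}, reading off the degenerate unfolding in the jet space of the Grassmannian bundle and then projecting it back to $\mathcal{M}$. I would take $f=\Phi=(\Phi_a)_a$ to be the $k$-parameter family of affine skew-products from \S\ref{sec:parablender-grasmmannian}, with its family of affine $cs$-blenders $\Gamma=(\Gamma_a)_a$ of central dimension $c=u^2$ and its folding manifold $\mathcal{S}_0$ from Example~\ref{exap:dobra}. By Theorem~\ref{thm-paratangencies} the induced family $f^{{G}}=(f^{{G}}_a)_a$ on $G_u(\mathcal{M})$ has a $C^d$-parablender $\Gamma^{{G}}=(\Gamma^{{G}}_a)_a$ at any parameter; equivalently, the map $\widehat{\Phi}^{{G}}$ on $J(G_u(\mathcal{M}))$ has a $cs$-blender $\widehat{\Gamma}^{{G}}$ projecting onto $\Gamma^{{G}}_0$, with a superposition region $\widehat{\mathscr{D}}^{{G}}$ as produced in \S\ref{sec:blender-grasma-jets}. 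Here $d<r-1$ is exactly what guarantees, via Remark~\ref{rem:paratangencias-regularidad}, that $\widehat{f}^{{G}}$ is at least $C^1$, so Theorem~\ref{thmBKR} applies.

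Next I would fix the constant family of folding manifolds $\mathcal{S}=(\mathcal{S}_a)_a$, $\mathcal{S}_a=\mathcal{S}_0$, and handle the robustness. By Remark~\ref{rem:rob-dobra} the property of being a $(\alpha,\nu,\delta)$-folding $C^r$-manifold is $C^2$-open, so any $C^{d,r}$-perturbation $\mathcal{L}=(\mathcal{L}_a)_a$ of $\mathcal{S}$ still consists, fiber by fiber, of folding manifolds with respect to $\mathcal{B}^{{G}}=\mathcal{B}\times\mathcal{C}^{{G}}$. By Lemma~\ref{lem-induced-folding-manifold} each $\mathcal{L}_a$ induces an $(\alpha,\nu,\delta)$-horizontal $d^{{G}}_{ss}$-dimensional $C^{r-1}$-disc $\mathcal{H}^{{G}}_{\mathcal{L},a}=\mathcal{L}^{{G}}_a\cap\overline{\mathcal{B}^{{G}}}$, and the computation of \S\ref{sec:disc-folding-manifold-jets} shows that the associated family $\mathcal{H}^{{G}}_{\mathcal{L}}=(\mathcal{H}^{{G}}_{\mathcal{L},a})_a$ induces an $(\alpha,\nu,\delta)$-horizontal $C^1$-disc $\widehat{\mathcal{H}}^{{G}}_{\mathcal{L}}$ in $\widehat{\mathcal{B}}^{{G}}$. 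Since being an almost-horizontal $C^1$-disc is an open condition, for $\mathcal{L}$ close enough to $\mathcal{S}$ in $C^{d,r}$ we have $\widehat{\mathcal{H}}^{{G}}_{\mathcal{L}}\in\widehat{\mathscr{D}}^{{G}}$, so the family $\mathcal{H}^{{G}}_{\mathcal{L}}$ lies in the open set of families of discs attached to the parablender $\Gamma^{{G}}$.

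Now I would invoke the parablender property. Fix $a_0\in\mathbb{I}^k$; by the ``at any parameter'' conclusion of Theorem~\ref{thm-paratangencies} (and Remark~\ref{rem:any-parameter}) $\Gamma^{{G}}$ is a $C^d$-parablender at $a_0$ with a neighborhood $\mathscr{U}^{{G}}$ of $f^{{G}}$ and superposition region containing $\mathcal{H}^{{G}}_{\mathcal{L}}$. For $g=(g_a)_a$ close to $f$ in $C^{d,r}$, the induced $g^{{G}}$ is close to $f^{{G}}$ in $C^{d,r-1}$, hence lies in $\mathscr{U}^{{G}}$; equivalently $\widehat{g}^{{G}}$ is close to $\widehat{\Phi}^{{G}}$, so $W^u_{loc}(\widehat{\Gamma}^{{G}}_g)\cap\widehat{\mathcal{H}}^{{G}}_{\mathcal{L}}\neq\emptyset$. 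Definition~\ref{def:parablender} then yields families $x^{{G}}=(x^{{G}}_a)_a$, $z^{{G}}=(z^{{G}}_a)_a$, $y^{{G}}=(y^{{G}}_a)_a\in C^d(\mathbb{I}^k,G_u(\mathcal{M}))$ with $z^{{G}}_a\in\Gamma^{{G}}_{g,a}$, $x^{{G}}_a\in W^u_{loc}(z^{{G}}_a)$, $y^{{G}}_a\in\mathcal{H}^{{G}}_{\mathcal{L},a}$ and $J(x^{{G}})=J(y^{{G}})$ at $a=a_0$.

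The main obstacle, and the step requiring care, is the translation of this degenerate unfolding in $J(G_u(\mathcal{M}))$ back into a $C^d$-degenerate tangency of dimension $u$ in $\mathcal{M}$; this is the parametric analogue of the projection argument in Proposition~\ref{prop-tangency-folding}. Writing $z^{{G}}_a=(z_a,E^u)$ via $\Gamma^{{G}}_g=\Gamma_g\times\{E^u\}$ and $x^{{G}}_a=(x_a,E_a)$, the lamination property \eqref{eq:implication}, now applied fiberwise and respecting the $C^d$-dependence on $a$ (as in the bookkeeping of Proposition~\ref{prop:parablender}), gives $z_a\in\Gamma_{g,a}$, $x_a\in W^u_{loc}(z_a)$ and $E_a=T_{x_a}W^u_{loc}(z_a)$; writing $y^{{G}}_a=(y_a,F_a)\in\mathcal{H}^{{G}}_{\mathcal{L},a}\subset\mathcal{L}^{{G}}_a$ gives $y_a\in\mathcal{L}_a$ and $F_a\subset T_{y_a}\mathcal{L}_a$ with $\dim F_a=u$. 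Projecting $J(x^{{G}})=J(y^{{G}})$ by $\mathscr{P}_m$ yields $J(x)=J(y)$, i.e.\ $d(x_a,y_a)=o(\|a\|^d)$, and on the Grassmannian fibre yields coincidence of the jets of the $u$-planes, i.e.\ $d(E_a,F_a)=o(\|a\|^d)$ at $a=a_0$. Thus $(x_a,E_a)$ and $(y_a,F_a)$ exhibit, in the sense of Definition~\ref{def:tangencia}, a $C^d$-degenerate unfolding at $a=a_0$ of a tangency of dimension $u$ between $\mathcal{W}=(W^u_{loc}(z_a;g_a))_a$ and $\mathcal{L}$, with $z=(z_a)_a\in C^d(\mathbb{I}^k,\mathcal{M})$ and $z_a\in\Gamma_{g,a}$, which is precisely the asserted conclusion. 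The delicate points are ensuring that the one-to-one projection of $W^u_{loc}(\widehat{\Gamma}^{{G}}_g)$ onto unstable manifolds carrying their tangent $u$-planes survives the passage to the jet space, and that the induced disc $\widehat{\mathcal{H}}^{{G}}_{\mathcal{L}}$ genuinely records the tangent planes of $\mathcal{L}$; both follow from Lemma~\ref{lem-induced-folding-manifold} and the identification $\mathscr{P}_m(\widehat{\Gamma}^{{G}})=\Gamma_0$ established in \S\ref{sec:blender-grasma-jets}.
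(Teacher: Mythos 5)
Your proposal is correct and follows essentially the same route as the paper's proof: invoke Theorem~\ref{thm-paratangencies} to get the parablender $\Gamma^{{G}}$ on $G_u(\mathcal{M})$, check that perturbations of the constant family of folding manifolds still induce discs in the superposition region of $\widehat{\Gamma}^{{G}}$, and then project the resulting jet-space intersection back via $x^{{G}}_a=(\tilde x_a,E_a)$, $y^{{G}}_a=(\tilde y_a,F_a)$ to read off a dimension-$u$ tangency unfolding $C^d$-degenerately in the sense of Definition~\ref{def:tangencia}. The only cosmetic difference is that you verify the folding property fiberwise via Remark~\ref{rem:rob-dobra}, whereas the paper works directly with $C^{d,r}$-neighborhoods $\mathscr{D}(\mathcal{S})$ and $\mathscr{U}(f)$ chosen so that the induced objects land in the corresponding neighborhoods at the jet level.
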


\begin{rem} \label{rem-BR19} The central dimension of the blenders $\Gamma_a$ is $c=u^2$ and the folding manifold $\mathcal{S}_a$ has dimension $ss+c$
where $ss\geq 1$ is the strong stable dimension of the blenders
$\Gamma_a$. Thus the dimension of the manifold $\mathcal{M}$ is
$m>u+u^2$.  Theorem~\ref{thmC} follows immediately from the above
result assuming that each folding manifold $\mathcal{S}_a$ is part
of the stable manifold of a point of $\Gamma_a$.
\end{rem}

\begin{proof}[Proof of Theorem~\ref{thm-BR19}]
Let $\Gamma=(\Gamma_a)_a$ be the family of $cs$-blenders of the
$C^d$-family $f=(f_a)_a$ of $C^r$-diffeomorphisms of $\mathcal{M}$
given in Theorem~\ref{thm-paratangencies}. Consider the
 {$C^d$-}$cs$-parablender
$\Gamma^{{G}}=(\Gamma^{{G}}_a)^{}_a$  {at any parameter} of
 {central dimension $c\geq 1$} for the induced
dynamics $f^{{G}}=(f^{{G}}_a)^{}_a$ in ${G}_u(\mathcal{M})$. From
the proof of Theorem~\ref{thm-paratangencies}, we have
 a $C^d$-family of $C^2$-robust folding
$C^r$-manifolds $\mathcal{S}=(\mathcal{S}_a)_a$ with respect to
the superposition domain $\mathcal{B}^{{G}}$ of $f^{{G}}$ such
$\mathcal{S}$ induces a family of $C^r$-discs in
$\mathcal{B}^{{G}}$ contained in
$\mathcal{S}^{{G}}=(\mathcal{S}^{{G}}_a)^{}_a$ where
$$
  \mathcal{S}^{{G}}_a=\{ (z,E): z\in \mathcal{S}_a, \  E \in G(u,m)
    \ \
    \text{and} \   E \subset T_z\mathcal{S}_a  \}.
$$
According to the proof of Proposition~\ref{prop:parablender}, the
open set of $k$-parameter $C^d$-families of $C^r$-discs contains a
$C^{d,r}$-neighborhood $\mathscr{D}(\mathcal{S}^{{G}})$ of
$\mathcal{S}^{{G}}=(\mathcal{S}^{{G}}_a)^{}_a$. Denote by
$\mathscr{D}(\mathcal{S})$ a $C^{d,r}$-neighborhood of
$\mathcal{S}=(\mathcal{S}_a)_a$, so that if
$\mathcal{L}=(\mathcal{L}_a)_a \in \mathscr{D}(\mathcal{S})$ then
the induced family of discs
$\mathcal{L}^{{G}}=(\mathcal{L}^{{G}}_a)^{}_a \in
\mathscr{D}(\mathcal{S}^{{G}})$. Similarly, let $\mathscr{U}(f)$
be a $C^{d,r}$-neighborhood of $f=(f_a)_a$ so that if $g=(g_a)_a
\in \mathscr{U}(f)$ then the induced family of maps
$g^{{G}}=(g^{{G}}_a)^{}_a \in \mathscr{U}(f^{{G}})$. Here,
$\mathscr{U}(f^{{G}})$ comes from the definition of a parablender
as the neighborhood of the $k$-parameter family
$f^{{G}}=(f^{{G}}_a)^{}_a$. From Definition~\ref{def:parablender}
 {applied at  $a=a_0$}, we have
$x=(x_a)_a,y=(y_a)_a,z=(z_a)_a\in
C^d(\mathbb{I}^k,G_u(\mathcal{M}))$ such that
$$
z_a \in \Gamma^{{G}}_{a,g} \ \ x_a \in W^u_{loc}(z_a) \ \
\text{and} \ \ y_a\in \mathcal{L}^{{G}}_a \ \ \text{so that} \ \
d(x_a,y_a)=o(\|a-a_0\|^{d}) \ \ \text{at $a=a_0$}
$$
where $\Gamma^{{G}}_{a,g}$ is the continuation for $g^{{G}}_a$ of
the $cs$-blender $\Gamma^{{G}}_a$ for all $a\in \mathbb{I}^k$. Set
$\mathcal{W}_a=W^u_{loc}(\tilde{z}_a)$ where
$z_a=(\tilde{z}_a,W_a) \in \Gamma_a \times G(u,m)$
 and denote
$\mathcal{W}=(\mathcal{W}_a)_a$. Hence,
$$
\text{$x_a=(\tilde{x}_a,E_a) \in W^u_{loc}(z_a)$ implying that
$E_a = T_{\tilde{x}_a}\mathcal{W}_a$ and thus $x_a \in
G_u(\mathcal{W}_a)$.}
$$
Similarly,
$$
\text{$y_a=(\tilde{y}_a,F_a) \in \mathcal{L}_a$ implies that $F_a
\subset T_{\tilde{y}_a}\mathcal{L}_a$ and thus $y_a \in
G_u(\mathcal{L}_a)$.}
$$
Therefore, by Definition~\ref{def:tangencia}, $\mathcal{L}$
 {and} $\mathcal{W}$ has a tangency of
 {dimension} $u>0$ at $a=a_0$ which unfolds
$C^d$-{degenerately}.
%Since $\mathcal{L}$ is part of the stable manifold of $\Gamma$,
%we obtain a homoclinic tangency, completing
This completes the proof.
\end{proof}

\appendix
\addtocontents{toc}{\protect\setcounter{tocdepth}{-1}}
\renewcommand{\thesection}{A}
\renewcommand{\theequation}{A.\arabic{equation}}
\setcounter{equation}{0} \setcounter{thm}{0}
\section{Estimates for the folding manifold of Example~\ref{exap:dobra}}
\label{appendix} We consider the $(ss+c)$-dimensional
$C^\infty$-manifold of Example~\ref{exap:dobra} given by
$$
  \mathcal{S}:[-2,2]^{ss}\times [-\epsilon,\epsilon]^c \to
  \mathbb{R}^m, \ \ \mathcal{S}(x,t)=(x,(t_1,\dots,t_u),h(t))
  \in \mathbb{R}^{ss}\times\mathbb{R}^u\times \mathbb{R}^c
$$
where $t=(t_1,\dots,t_u,\dots,t_c) \in [-\epsilon,\epsilon]^c$ and
$h(t)=(h_1(t),\dots,h_c(t))$ with
%$$
% h(x,t)=(t_1 t_1 + t_2t_{u+1}+t_3t_{2u+1}+\dots +t_ut_{(u-1)u+1},
% t_1 t_2 + t_2t_{u+2}+t_3t_{2u+2}+\dots +t_ut_{(u-1)u+2},\dots,
% t_1 t_u + t_2t_{2u}+t_3t_{3u}+\dots +t_ut_{c}).
%$$
\begin{align*}
%   h(x,t)=(h_1(x,t),\dots,h_i(x,t),\dots,h_c(x,t)) \ \
    h_i(t)=  \sum_{j=0}^{u-1} t_{j+1} t_{ju+i}  \quad \text{for $i=1,\dots,u$ \quad and} \quad h_i(t)=t_i \quad \text{for $i=u+1,\dots,c$.}
\end{align*}
%$$
% h(t,x)=
%\begin{pmatrix}
% t_1 t_1 + t_2t_{u+1}+t_3t_{2u+1}+\dots +t_ut_{(u-1)u+1} \\
% t_1 t_2 + t_2t_{u+2}+t_3t_{2u+2}+\dots +t_ut_{(u-1)u+2} \\
% \dots \\
% t_1 t_u + t_2t_{2u}+t_3t_{3u}+\dots +t_ut_{c}
%\end{pmatrix}
%$$
Let $t=t(E)$ be the $C^{r-1}$-function on $\mathcal{C}^{{G}}$
computes in Example~\ref{exap:dobra}. We have that:

\begin{prop} \label{prop1:appendix}
For every $\varepsilon>0$ suffices small there is a neighborhood
$\mathcal{C}^{{G}}$ of $E^u$ such that
$$
  C=\|Dh\|_\infty \cdot
\max\{1,\|Dt\|_\infty\} \leq 1+\varepsilon \quad \text{over
$\mathcal{C}^{{G}}$.}
$$
Thus, $\mathcal{S}$ is a
$(\textcolor{blue}{\alpha,}\nu,\delta)$-folding manifold for any
$\nu>0$ such that $(1+\varepsilon)\nu<\delta$.
\end{prop}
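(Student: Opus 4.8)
The plan is to reduce the estimate on $C=\|Dh\|_\infty\cdot\max\{1,\|Dt\|_\infty\}$ to the analysis of the two derivatives at the single plane $E^u$, and then invoke continuity. Recall from Example~\ref{exap:dobra} that $t=t(E)$ is obtained by solving the linear system $A(E)\,t=\vec{c}(E)$, that $\vec{c}(E^u)=0$ and $\det A(E^u)=2\neq 0$, so that $t(E^u)=A(E^u)^{-1}\vec{c}(E^u)=0$. Thus at the center $E^u$ of the neighborhood $\mathcal{C}^{G}$ the folding manifold passes through the base point $t=0$, and since both $h=h(t)$ and $t=t(E)$ are of class $C^\infty$, it suffices to bound $\|Dh\|$ at $t=0$ and $\|Dt\|$ at $E=E^u$ and afterwards shrink $\mathcal{C}^{G}$.

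First I would estimate $\|Dh\|$ near $t=0$. Writing $h=(H_1,\dots,H_u,t_{u+1},\dots,t_c)$, the last $c-u$ rows of $Dh$ form the coordinate projection onto the directions $t_{u+1},\dots,t_c$, whereas each $H_i$ is a homogeneous quadratic in $t$, so the first $u$ rows of $Dh$ are linear in $t$ and vanish at $t=0$. Hence $Dh(0)$ is exactly this projection, $\|Dh(0)\|\le 1$, and by continuity $\|Dh\|_\infty\le 1+O(\epsilon)$ over the image of $[-2,2]^{ss}\times[-\epsilon,\epsilon]^c$.

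The core of the argument, and the main obstacle, is to control $\|Dt\|$ at $E^u$. Differentiating the identity $A(E)\,t(E)=\vec{c}(E)$ and using $t(E^u)=0$ kills the term in $DA$, leaving $Dt(E^u)=A(E^u)^{-1}\,D\vec{c}(E^u)$. Using a graph chart of $G(u,m)$ over $E^u$, in which a nearby $u$-plane is written as $\{(Lw,w,Mw):w\in\mathbb{R}^u\}$ with $(L,M)$ vanishing at $E^u$, one checks that $\vec{c}(E)$ is precisely the top $u\times u$ block of $M$; hence $D\vec{c}(E^u)$ is a coordinate projection. On the other hand the computation in Example~\ref{exap:dobra} exhibits $A(E^u)=\mathrm{Id}+w\,e_1^{T}$ as a rank-one update of the identity, where $w=(e_1,\dots,e_u)^T$ and $e_1$ is the first standard covector, so that $e_1^T w=1$ and by the Sherman--Morrison formula $A(E^u)^{-1}=\mathrm{Id}-\tfrac12\,w\,e_1^{T}$. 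Plugging this in and reading off the entries of $Dt(E^u)$ yields the bound $\|Dt(E^u)\|\le 1$ in the relevant norm; this explicit inversion and entrywise bookkeeping is where the bulk of the work lives.

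Finally, by continuity of $E\mapsto Dt(E)$ and of $t\mapsto Dh(t)$ together with $t(E^u)=0$, for every $\varepsilon>0$ I would choose $\mathcal{C}^{G}$ small enough (equivalently $\epsilon$ small enough) that $\|Dh\|_\infty\le 1+\varepsilon'$ and $\|Dt\|_\infty\le 1+\varepsilon'$ on $\mathcal{C}^{G}$, whence $C\le(1+\varepsilon')^2\le 1+\varepsilon$. Conditions (1) and (2) of Definition~\ref{def:dobra} were already verified in Example~\ref{exap:dobra}, and the existence, uniqueness and $C^{r-1}$-dependence of the solution $t=t(x,E)$ in condition (3) follow from $\det A(E^u)\neq 0$ via the implicit function theorem; thus $C\nu<\delta$ holds as soon as $(1+\varepsilon)\nu<\delta$, proving that $\mathcal{S}$ is a $(\alpha,\nu,\delta)$-folding manifold for every such $\nu$.
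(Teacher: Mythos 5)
Your overall architecture is the same as the paper's --- bound $\|Dh\|$ at $t=0$, bound $\|Dt\|$ at $E^u$, conclude by continuity after shrinking $\mathcal{C}^G$ --- but your route to $Dt(E^u)$ is genuinely different and cleaner. The paper writes $t_i=\det A_i^*/\det A$ and differentiates via Jacobi's formula, which forces it through the adjugate of $A_i^*$ and a cofactor-by-cofactor computation. You instead differentiate the identity $A(E)\,t(E)=\vec{c}(E)$, use $t(E^u)=0$ to kill the $DA\cdot t$ term, and get $Dt(E^u)=A(E^u)^{-1}D\vec{c}(E^u)$ directly, with $A(E^u)^{-1}=\mathrm{Id}-\tfrac12\,w\,e_1^T$ from Sherman--Morrison. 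This is correct, and it in fact exposes the true cofactor structure more transparently than the paper does: since $\mathrm{Adj}(A(E^u))=2\,\mathrm{Id}-w\,e_1^T$, most cofactors $C_{\ell i}$ vanish (only the diagonal ones and those in the first column survive), which is at odds with the paper's asserted formula $C_{\ell i}=(-1)^{\ell+i}\sigma_i$ for all $\ell$; your derivation is the more reliable of the two. Your identification of $D\vec{c}(E^u)$ as a coordinate projection in a graph chart is also correct, and the treatment of $Dh$ (the quadratic part has vanishing derivative at $t=0$, the remaining rows form a projection) matches the paper's explicit computation.

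The one place where you stop short is precisely the step you flag as ``where the bulk of the work lives,'' and it is not a harmless deferral: the claimed bound $\|Dt(E^u)\|\le 1$ depends on which norm is meant. Row $i$ of $A(E^u)^{-1}$ is $e_i^T-\tfrac{w_i}{2}e_1^T$, and composing with the coordinate projection $D\vec{c}(E^u)$ does not shrink row sums; so in the standard operator $\infty$-norm (maximum of the sums of \emph{absolute values} of the row entries) one gets $1+\tfrac12=\tfrac32$ for the rows with $w_i=1$, $i\neq 1$, not $1$. The bound $\le 1$ only comes out under the convention the paper actually uses in its computation of $\|Dh\|_\infty$, namely $\max_i$ of the absolute value of the \emph{signed} sum of the entries of row $i$, where the cancellation $1-\tfrac12=\tfrac12$ occurs. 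You should either make that convention explicit and carry out the (now one-line) row-sum computation, or accept the constant $\tfrac32+\varepsilon$ in place of $1+\varepsilon$ --- which still yields that $\mathcal{S}$ is an $(\alpha,\nu,\delta)$-folding manifold for all $\nu$ with $(\tfrac32+\varepsilon)\nu<\delta$, enough for every use made of the proposition, but not the literal statement $C\le 1+\varepsilon$.
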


\begin{proof}
First of all, notice that for all $t\in [-\epsilon,\epsilon]^c$,
it holds that
$$
   \|Dh(t)\|_{\infty} \eqdef \max_{i=1,\dots, c} |
   Dh_i(t)| =\max \bigg\{ \,1, \ \max_{i=1,\dots,u}
\big|\sum_{j=0}^{u-1} t_{j+1}+t_{ju+i}
 \big|\,\bigg\}.
$$
Hence, taking $\epsilon>0$ small enough we have that
$\|Dh\|_\infty =1$. On the other hand, by Cramer's rule we get
that the solution of the linear system $At= \vec{c}$ \ is given by
$$
   t_i =  \frac{\det A_i^*}{\det A}  \qquad \text{for
   $i=1,\dots,c$}
$$
where $A^*_{i}$ is the matrix formed by replacing the $i$-th
column of $A$ by the column~vector~$\vec{c}$. In order to compute
$Dt$ we write the variable of $t$ by $E=\langle
v_{1},\dots,v_u\rangle$ with $v_k=(a_k,b_k,c_k)\in
\mathbb{R}^{ss}\times\mathbb{R}^{u}\times\mathbb{R}^c$ and then
$$
Dt=(\partial_{v_1}t,\dots,\partial_{v_u}t) \quad \text{with} \ \ \
\partial_{v_k}
t=(\partial_{a_k}t,\partial_{b_k}t,\partial_{c_k}t) \quad
\text{for $k=1,\dots,u$}.
$$
Moreover, since the matrix $A$ does not depend on the variables
$a_k$ we get that $\partial_{a_k}t=0$. In the sequel we will use
the symbol $D_{k}$ to denote any partial derivative of the
form~$\partial_{b_{k\iota}}$~or~$\partial_{c_{k\iota}}$. For each
$i=1,\dots,c$, using Jacobi's formula it follows that
$$
  D_kt_i =\frac{D_k(\det A^*_i)-\mathrm{tr}(A^{-1} \cdot D_kA ) \cdot \det A^*_i}{\det
  A}.
$$
In particular, since  $\vec{c}(E^u)=0$ then $\det A^*_i(E^u)=0$,
and  $\det A(E^u)=2$,  we obtain that
$$
D_kt_i(E^u)=\frac{1}{2} \, D\det A^*_i|_{E^u} = \frac{1}{2} \,
\mathrm{tr}(\mathrm{Adj}(A^*_i) \cdot D_kA^*_i)|_{E^u}
$$
where $\mathrm{Adj}(A^*_i)$ is the adjugate matrix of $A^*_i$.
Notice that
$$
   \mathrm{Adj}(A^*_i(E^u))=(C_{\ell j})^T \quad \text{with \ \
   $C_{\ell j} =0$ \  if \ $j\not=i$ \ \ and \ \ $C_{\ell i}=(-1)^{\ell+i}\cdot \sigma_i$  \ \ for $\ell=1,\dots,c$}
$$
where $\sigma_i=2$ if $i\not=1$ and $\sigma_i=1$ otherwise
($i=1$). From this follows that
$$
  \mathrm{tr}(\mathrm{Adj}(A^*_i) \cdot D_kA^*_i)|_{\,(E^u)}=
  (C_{1i},\dots,C_{ci})\cdot D_k\vec{c}(E^u).
$$
If $D_k$ is either, $\partial_{b_{k\iota}}$ for $\iota=1,\dots,u$
or $\partial_{c_{k\iota}}$ for $\iota=u+1,\dots,c$  then
$D_k\vec{c}=0$ and thus $D_kt_i(E^u)=0$. Otherwise,
$$
 (C_{1i},\dots,C_{ci})\cdot D_k\vec{c}(E^u) = C_{(k-1)u+\iota \, i}
 \quad \text{and hence} \quad D_kt_i(E^u)=\frac{1}{2} \,
 C_{(k-1)u+\iota \, i}.
$$
Therefore
$$
   \|Dt(E^u)\|_{\infty}=\frac{1}{2} \ \max_{i=1,\dots,c} \ \max_{k=1,\dots,u} \  \big|\sum_{\iota=1}^{u}
   C_{(k-1)u+\iota \, i} \big|= \frac{1}{2} \
   \max_{i=1,\dots,c} \  \max_{k=1,\dots,u} \
   \big|\sum_{\iota=1}^u (-1)^{i+(k-1)u+\iota}  \sigma_i \big|
    \leq 1.
$$
Since $Dt$ varies continuously with respect to $E$ we have that
$\|Dt(E)\|_\infty$ is close to $\|Dt(E^u)\|_\infty$ for any $E$
close enough to $E^u$. Thus, shrinking $\mathcal{C}^{{G}}$ if
necessary, this implies that $\|Dt\|_\infty \leq 1+\varepsilon$
over $ \mathcal{C}^{{G}}_\alpha$. Hence, $C=\|Dh\|_\infty \cdot
\max\{1,\|Dt\|_\infty\} \leq 1+\varepsilon$ for a fixed but
arbitrarily small  $\varepsilon>0$.
\end{proof}

This $(\textcolor{blue}{\alpha,}\nu,\delta)$-folding
$C^r$-manifold $\mathcal{S}$ induces a $C^{r-1}$-disc
$$
  \mathcal{H}^{{G}}: [-2,2]^{ss}\times \mathcal{C}^{{G}}_\alpha \longrightarrow
  G_u(\mathbb{R}^m), \qquad \mathcal{H}^{{G}}(x,E)=(\mathcal{S}(x,t), E)
  \in \overline{\mathcal{B}}\times \mathcal{C}^{{G}}=
  \overline{\mathcal{B}^{{G}}}
$$
Set $h^{{G}}=\mathscr{P}\circ \mathcal{H}^{{G}}$. Here
$\mathscr{P}$ denotes the standard projection onto $\mathbb{R}^c$.
We consider
$$
  \widehat{h}^{{G}}(J(\xi^{{G}}))=J(h^{{G}}\circ
  \xi^{{G}})
  %=(h^{{G}}(\xi^{{G}}_a),\partial_a^ih^{{G}}(\xi^{{G}}_a))_{|_{\,a=0}}
  \quad \text{for \ \ $\xi^{{G}}=(\xi^{{G}}_a)_a\in C^d(\mathbb{R}^k,\mathbb{R}^{ss}\times
  G(u,m))$}
$$
with
$$
J(\xi^{{G}})=((\xi_0,E_0),\partial^1\xi^{{G}},\dots,\partial^d\xi^{{G}})
\in \big([-2,2]^{ss}\times \mathcal{C}^{{G}}\big) \times
\overline{B}_\rho(0)$$ where $\overline{B}_\rho(0)$ denotes a
closed ball of radius $\rho>0$ at $0$ velocity of the jets over
$\mathbb{R}^{ss}\times G(u,m)$.

\begin{prop} \label{prop2:appendix}
For every $\varepsilon>0$ suffices small there are $\rho>0$ and a
neighborhood $\mathcal{C}^{{G}}$ of $E^u$ so that
$$
  \|D\widehat{h}^{{G}}\|_\infty \leq 1+\varepsilon \quad \text{over \ \
$  \big([-2,2]^{ss}\times \mathcal{C}^{{G}}\big) \times
\overline{B}_\rho(0)$.}
$$
Thus, $\widehat{\mathcal{H}}^{{G}}$ is a
$(\textcolor{blue}{\alpha,}\nu,\delta)$-horizontal disc for any
$\nu>0$ such that $(1+\varepsilon)\nu<\delta$.
\end{prop}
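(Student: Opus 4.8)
The plan is to write the jet prolongation $\widehat{h}^{G}$ out through the chain rule and show that, once the positive-order jet coordinates are confined to the ball $\overline{B}_\rho(0)$, its derivative differs from a block-diagonal map built out of copies of $Dh^{G}$ only by a remainder of size $O(\rho)$. Recall that $\widehat{h}^{G}(J(\xi^{G}))=J(h^{G}\circ\xi^{G})$, so its order-$i$ component is $\partial^i_a(h^{G}\circ\xi^{G}_a)|_{a=0}$ for $i=0,\dots,d$. By the Fa\`a di Bruno formula (in its multivariable form, which I write schematically in the order $i$),
$$
   \partial^i_a(h^{G}\circ\xi^{G}_a)\big|_{a=0}
   =\sum_{\pi\vdash\{1,\dots,i\}} D^{|\pi|}h^{G}(\xi^{G}_0)\cdot\prod_{B\in\pi}\partial^{|B|}\xi^{G},
$$
where $\pi$ runs over the partitions of $\{1,\dots,i\}$ into blocks $B$ of size $|B|\ge 1$. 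The single-block partition contributes the leading term $Dh^{G}(\xi^{G}_0)\cdot\partial^i\xi^{G}$; every other partition has $|\pi|\ge 2$ and therefore couples at least two positive-order jet factors $\partial^{|B|}\xi^{G}$.

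Next I would differentiate this expression with respect to the jet coordinates $\xi^{G}_0,\partial^1\xi^{G},\dots,\partial^d\xi^{G}$ to read off the block structure of $D\widehat{h}^{G}$ over the region $\big([-2,2]^{ss}\times\mathcal{C}^{G}\big)\times\overline{B}_\rho(0)$, on which $\xi^{G}_0$ ranges freely while each $\partial^j\xi^{G}$ with $j\ge 1$ has norm at most $\rho$. A key combinatorial observation is that a block of size exactly $i$ occurs only in the single-block partition, so $\partial^i\xi^{G}$ appears only in the leading term and linearly; hence the derivative of the $i$-th component with respect to $\partial^i\xi^{G}$ equals $Dh^{G}(\xi^{G}_0)$ \emph{exactly}, with no remainder. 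Every remaining partial derivative --- with respect to $\partial^j\xi^{G}$ for $1\le j<i$, or with respect to the base coordinate $\xi^{G}_0$ acting on the coefficients $D^{|\pi|}h^{G}(\xi^{G}_0)$ --- produces a sum of terms each of which still retains at least one positive-order jet factor, and is therefore $O(\rho)$ uniformly on the compact region $[-2,2]^{ss}\times\overline{\mathcal{C}^{G}}\times\overline{B}_\rho(0)$. Thus $D\widehat{h}^{G}$ is block-diagonal with all diagonal blocks equal to $Dh^{G}(\xi^{G}_0)$, up to an off-diagonal remainder of size $O(\rho)$, the number of diagonal blocks being the fixed integer $\binom{d+k}{d}$.

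Since the $\infty$-operator norm of a block-diagonal matrix whose blocks all equal $Dh^{G}(\xi^{G}_0)$ is exactly $\|Dh^{G}\|_\infty$, summing the off-diagonal contributions over the finitely many blocks gives
$$
   \|D\widehat{h}^{G}\|_\infty\le \|Dh^{G}\|_\infty+K\rho,
$$
for a constant $K$ depending only on $d$, $k$ and on the bounds of the derivatives of $h^{G}$ up to order $d+1$ over the compact region (finite because $h^{G}$ is $C^\infty$ here). Applying Proposition~\ref{prop1:appendix} with $\varepsilon/2$ in place of $\varepsilon$ provides a neighborhood $\mathcal{C}^{G}$ of $E^u$ on which $\|Dh^{G}\|_\infty\le C\le 1+\varepsilon/2$; choosing $\rho$ so small that $K\rho\le\varepsilon/2$ then yields $\|D\widehat{h}^{G}\|_\infty\le 1+\varepsilon$. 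Together with the hypothesis $(1+\varepsilon)\nu<\delta$, this verifies the distortion bound in the third condition of Definition~\ref{def:almost-horizontal-disc} for the central coordinate of $\widehat{\mathcal{H}}^{G}$, so $\widehat{\mathcal{H}}^{G}$ is $(\alpha,\nu,\delta)$-horizontal.

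The step I expect to be the main obstacle is making the uniform $O(\rho)$ estimate for the off-diagonal remainder rigorous. The subtlety is that the base coordinate $\xi^{G}_0$ lives in the full, non-small region $[-2,2]^{ss}\times\mathcal{C}^{G}$, so one must check that \emph{every} remainder term --- whether it comes from a multi-block partition or from differentiating a coefficient $D^{s}h^{G}(\xi^{G}_0)$ at the base point --- genuinely keeps at least one factor $\partial^{|B|}\xi^{G}$ of order $\ge 1$, which is precisely what forces the factor $\rho$. This is guaranteed because in Fa\`a di Bruno every block has size at least one, so the only term carrying no positive-order jet factor is the constant term $h^{G}(\xi^{G}_0)$ appearing in the $i=0$ component; once this is confirmed, the uniform constant $K$ follows from the boundedness of the derivatives of $h^{G}$ up to order $d+1$ on the relevant compact set.
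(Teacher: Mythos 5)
Your proof is correct, but it takes a different route from the paper's. The paper factors the jet prolongation through the intermediate map $\widehat{t}$, writing $D\widehat{h}^{G}=\frac{d\widehat{h}^{G}}{dJ(t)}\cdot\frac{d\widehat{t}}{dJ(E)}$, evaluates both factors at the single distinguished jet $J(E^u)=(E^u_0,0)$ where all higher-order jet coordinates vanish (there Fa\`a di Bruno collapses and each factor has norm $\le 1$ by the computations of Proposition~\ref{prop1:appendix}), and then extends the bound $\le 1+\varepsilon$ to a neighborhood purely by continuity of $D\widehat{h}^{G}$, shrinking $\mathcal{C}^{G}$ and $\rho$. You instead expand $D\widehat{h}^{G}$ globally over the region via the multivariable Fa\`a di Bruno formula, identify the exact block-lower-triangular structure with diagonal blocks $Dh^{G}(\xi^{G}_0)$, and bound all off-diagonal entries by $K\rho$ because each retains at least one positive-order jet factor; the base bound $\|Dh^{G}\|_\infty\le 1+\varepsilon/2$ is then imported from Proposition~\ref{prop1:appendix}. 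What your approach buys is a quantitative remainder and uniformity in the base coordinate $\xi^{G}_0$ over the whole of $[-2,2]^{ss}\times\mathcal{C}^{G}$ without having to localize near a single jet; what the paper's buys is brevity, since the pointwise evaluation at $J(E^u)$ plus continuity avoids tracking the combinatorics of the partitions altogether. Your key combinatorial observation --- that a block of size $i$ occurs only in the single-block partition, so the $(i,i)$ diagonal entry is exactly $Dh^{G}(\xi^{G}_0)$ and every other entry carries a factor of order $\rho$ --- is sound, and the appeal to Proposition~\ref{prop1:appendix} for the base estimate is legitimate since $\|Dh^{G}\|_\infty\le\|Dh\|_\infty\max\{1,\|Dt\|_\infty\}$.
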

\begin{proof}
Observe that $h^{{G}}(E)=h(t(E))$. In this way,
\begin{equation}
\label{eqA1}
  \widehat{h}^{{G}}(J(E))=J(h^{{G}}\circ
  E)%=(h^{{G}}(E_a),\partial^ih^{{G}}(E_a))_{|_{\,a=0}}
   =(h(t(E_a)),\partial^1_a h(t(E_a)),\dots,\partial^d_a h(t(E_a)))_{|_{\,a=0}}
\end{equation}
for $E=(E_a)_a \in C^d(\mathbb{I}^k,G(u,m))$ with $E_0\in
\mathcal{C}^{{G}}_\alpha$. Denoting $t_a=t(E_a)$ for all $a\in
\mathbb{I}^k$, we can rewrite~\eqref{eqA1} as
$$
\widehat{h}^{{G}}(J(t))=(h(t_a),\partial^1_ah(t_a),\dots,\partial^d_ah(t_a))_{|_{\,a=0}}
\quad \text{where $t=(t_a)_a \in C^d(\mathbb{I}^k,\mathbb{R}^c)$}
$$
with $t_0$ small enough in norm. Therefore,
\begin{equation}
\label{eqA2} D\widehat{h}^{{G}}= \frac{d\widehat{h}^{{G}}}{dJ(t)}
\cdot \frac{dJ(t)}{dJ(E)}= \frac{d\widehat{h}^{{G}}}{dJ(t)} \cdot
\frac{d \widehat t}{dJ(E)}
\end{equation}
where $\widehat{t}(J(E))=J(t\circ E)$.  We want to compute
$\|D\widehat{h}^{{G}}(J(E^u))\|_{\infty}$ where $E^u=(E^u_a)^{}_a$
with $E^u_a=E^u_0=\{0^{ss}\}\times \mathbb{R}^u\times \{0^c\}$ for
all $a\in \mathbb{I}^k$. Hence $J(E^u)=(E^u_0,0)$. By a
straightforward calculation using Fa\`a di Bruno's formula, we
have
% To do this, we first show that
\begin{equation}
 \label{eqA3} \big\| \frac{d\widehat{t}}{dJ(E)}(J(E^u))\big \|_{\infty} =
 \|Dt(E^u_0)\|_\infty \leq 1.
\end{equation}
%Indeed,
%$J(t)%=(t_a,\partial^1_at_a,\dots,\partial^d_at_a)_{|_{\,a=0}}
%=(t_0,\partial^1t,\dots,\partial^dt)$
%where, by using change rule, Leibnitz formula and denoting
%$J(E)=(E_a,\partial^1_aE_a,\dots,\partial^dE_a)_{|_{\,a=0}}=(E_0,\partial^1E,\dots,\partial^dE)$,
%$$
% \partial^it=\sum_{j=0}^{i-1} \binom{i-1}{j}\
%\partial^{j}_a(Dt(E_a))_{|_{\,a=0}} \cdot \partial^{i-j}E \quad \text{for $i=1,\dots,d$}.
%$$
%Hence, since $J(E^u)=(E^u_0,0)$ then the partial derivative of
%$\partial^it$ with respect to $E_0$ at $J(E^u)$ is zero and with
%respect to $\partial^i E$ also at  $J(E^u)$ is $Dt(E^u_0)$. From
%this follows~\eqref{eqA3}.
By means of a similar computation we can show that
\begin{equation}
 \label{eqA4} \big\| \frac{d\widehat{h}^{G}}{dJ(t)}(J(t^{u}))\big \|_{\infty} =
 \|Dh(0)\|_\infty = 1
\end{equation}
where $t^{u}=(t^{u}_a)^{}_a$ with $t^{u}_a=t(E^u_a)=t(E^u_0)=0$
for all $a\in \mathbb{I}^k$
%Indeed, $\widehat{h}^G(J(t))$ is given by
%$$
%  h(t_a)_{|_{\, a=0}}= h(t_0) \quad \text{and} \ \
%  \partial^i_a h(t_a)_{|_{\, a=0}}=\sum_{j=0}^{i-1} \binom{i-1}{j}\
%\partial^{j}_a(Dh(t_a))_{|_{\,a=0}} \cdot \partial^{i-j}t \quad
%\text{for $i=1,\dots,d$}.
%$$
%Hence,
and hence  $J(t^{u})=0$.
%then the partial derivative of $G$
%with respect to $t_0$ at $J(t^{{G}})$ is zero and with respect to
%$\partial^i t$ also at $J(t^{{G}})$ is $Dt(t^{{G}}_0)$. From this
%follows~\eqref{eqA4}.
Finally putting together \eqref{eqA2}-\eqref{eqA4} we get that
$$
\|D\widehat{h}^{{G}}(J(E^u))\|_\infty \leq \big\|
\frac{d\widehat{h}^{{G}}}{dJ(t)}(J(t^{{G}})) \big\| \cdot \big\|
\frac{dJ(t)}{dJ(E)} (J(E^u)) \big\| \leq 1.
$$
By continuity with respect to $J(E)$, shrinking
$\mathcal{C}^{{G}}$ if necessary and taking $\rho>0$ small enough,
we have $\|D\widehat{h}^{{G}}\|_{\infty} \leq 1+\varepsilon$ over
$\big([-2,2]^{ss}\times \mathcal{C}^{{G}}_\alpha\big) \times
\overline{B}_\rho(0)$ for a fixed but arbitrarily small
$\varepsilon>0$. This completes the proof.
\end{proof}

\subsection*{Acknowledgements}
We thank P.~Berger who told us the ideais behind his proof
in~\cite{Ber17}.
%and encouraged us to carry them forward to provide
%a proof of Theorem~\ref{thmA} for $d=r\geq 1$.
We are grateful to J.~Rojas for helping us to understand
P.~Berger's articles and for the initial discussions on this
project. We also thank the anonymous referee for his comments
and pointing out some problems.

\bibliographystyle{alpha}
\bibliography{br-bib}

\end{document}